\numberwithin{equation}{section}
\newtheorem{Theorem}{Theorem}[section]
\newtheorem{Lemma}[Theorem]{Lemma}
\newtheorem{Proposition}[Theorem]{Proposition}
\newtheorem{Conjecture}[Theorem]{Conjecture}
\theoremstyle{definition}
\newtheorem{Definition}[Theorem]{Definition}
\newtheorem{Remark}[Theorem]{Remark}
\newtheorem{Set}[Theorem]{Setup}
\newtheorem{Example}[Theorem]{Example}
\def\C{{\mathbb C}}
\def\P{{\mathbb P}}
\def\Z{{\mathbb Z}}
\def\cE{{\mathcal E}}
\def\cL{{\mathcal L}}
\def\cM{{\mathcal M}}
\def\cO{{\mathcal{O}}}
\def\cU{{\mathcal U}}
\def\operatorname#1{\mathop{\rm #1}\nolimits}
\def\codim{\operatorname{codim}}
\def\deg{\operatorname{deg}}
\def\ME{{\operatorname{ME}}}
\newcommand{\cME}[1]{\overline{\ME}}
\begin{document}

\newcommand{\arXivNumber}{2405.04002}

\renewcommand{\PaperNumber}{045}

\FirstPageHeading

\ShortArticleName{Quadratic Varieties of Small Codimension}

\ArticleName{Quadratic Varieties of Small Codimension}

\Author{Kiwamu WATANABE}

\AuthorNameForHeading{K.~Watanabe}

\Address{Department of Mathematics, Faculty of Science and Engineering, Chuo University,\\
1-13-27 Kasuga, Bunkyo-ku, Tokyo 112-8551, Japan}
\Email{\href{mailto:watanabe@math.chuo-u.ac.jp}{watanabe@math.chuo-u.ac.jp}}

\ArticleDates{Received January 28, 2025, in final form June 10, 2025; Published online June 15, 2025}

\Abstract{Let $X \subset \mathbb{P}^{n+c}$ be a nondegenerate smooth projective variety of dimension $n$ defined by quadratic equations. For such varieties, P.~Ionescu and F.~Russo proved the Hartshorne conjecture on complete intersections, which states that $X$ is a complete intersection provided that $n \geq 2c+1$. As the extremal case, they also classified $X$ with $n=2c$. In this paper, we classify $X$ with $n=2c-1$.}

\Keywords{Hartshorne conjecture; complete intersections; Fano varieties; homogeneous va\-rieties}
\Classification{14J40; 14J45; 14M10; 14M17; 51N35}

\section{Introduction}

Let $X \subset \P^{n+c}$ be a complex nondegenerate smooth projective variety of dimension $n$. Philosophically, when the codimension $c$ is small, the structure of $X$ is subject to strong constraints. In this direction, R. Hartshorne raised his famous conjecture.
\begin{Conjecture}[\cite{Hart-ci}] If $n \geq 2c+1$, then $X$ is a complete intersection.
\end{Conjecture}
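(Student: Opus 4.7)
The ``final statement'' is Hartshorne's notorious conjecture on complete intersections of small codimension, which remains open in full generality. My plan is therefore to work inside the paper's standing hypothesis, where $X \subset \P^{n+c}$ is cut out by quadratic equations, and mimic the Ionescu--Russo strategy that the abstract credits. The overall idea is to combine the inequality $n \geq 2c+1$ with the quadratic presentation of $X$ to show that $X$ is a Fano manifold swept out by a rich family of lines, then to leverage this abundance of lines to force the homogeneous ideal of $X$ to be generated by exactly $c$ quadrics forming a regular sequence.

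First, I would analyse the variety $\cL_x \subset \P(T_xX)$ of lines through a general point $x \in X$. Because the defining equations are quadratic, the projective second fundamental form of $X$ at $x$ is given directly by these quadrics, so $\cL_x$ is cut out in $\P^{n-1}$ by at most $c$ quadratic equations. Under $n \geq 2c+1$ one obtains that $\cL_x$ is non-empty, connected, and of dimension at least $n-1-c \geq c$, so $X$ is uniruled by lines and, a fortiori, a Fano manifold of large pseudoindex. This step uses the quadratic hypothesis essentially and replaces the Lefschetz-type vanishing arguments that fail in the unrestricted conjecture.

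Next I would iterate the construction: the chain locus $\cloc^{(k)}(x)$ of points reachable from $x$ by chains of at most $k$ lines has codimension dropping by roughly $c$ at each step, so after a bounded number of stages it exhausts $X$. Combined with a Fulton--Hansen-type connectedness argument applied to the join $X \times X \to \P^{n+c}$, this shows that a general pair of points of $X$ is joined by a low-degree rational curve, whence $X$ is projectively normal and its homogeneous ideal is generated in degree two. Counting the $c$ independent quadrics that survive in $I_{X,2}$ and verifying that their common zero scheme has the correct Hilbert polynomial identifies $X$ with this complete intersection.

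The principal obstacle lies in the last step: passing from \emph{the ideal is generated by $c$ independent quadrics} to \emph{those quadrics form a regular sequence whose scheme is exactly $X$}. Ruling out extra Koszul syzygies requires fine control of the geometry of $\cL_x$ (ideally itself a smooth complete intersection of $c$ quadrics in $\P^{n-1}$), and this is the delicate point where the VMRT/second-fundamental-form dictionary does the real work. An attack on the full Hartshorne conjecture without the quadratic hypothesis would have to supply a completely different starting input, since the second-fundamental-form shortcut exploited here is unavailable in the general setting — which is precisely why the original conjecture remains open.
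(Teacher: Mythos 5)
You have not proved the statement, and no one has: the statement you were given is Hartshorne's conjecture itself, which the paper records precisely as a \emph{Conjecture} and explicitly describes as still widely open. The paper contains no proof of it, and your proposal does not supply one either, because in your first paragraph you silently replace the statement by its special case for \emph{quadratic} varieties. That special case is a genuinely different (and much weaker) assertion, namely Theorem~\ref{them:HC}\,(i), which the paper quotes from Ionescu--Russo \cite{IR} without reproving. A proof of the actual statement would have to handle arbitrary nondegenerate smooth $X \subset \P^{n+c}$ with $n \geq 2c+1$, and, as you yourself concede at the end, your entire mechanism (second fundamental form spanned by the defining quadrics, $\cL_x$ cut out by $c$ quadrics) is unavailable there. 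Restricting to a tractable subcase is not a proof of the conjecture.

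Even judged as a sketch of the quadratic case, your middle step has a genuine gap. Chain-connectedness by lines together with a Fulton--Hansen connectedness argument does not yield projective normality, nor does it show the homogeneous ideal is generated in degree two --- in the quadratic setting, generation by quadrics (scheme-theoretically) is the \emph{hypothesis}, not a conclusion, and projective normality of such varieties comes from Bertram--Ein--Lazarsfeld \cite{BLL91}, which plays no role in the Ionescu--Russo proof of the Hartshorne conjecture. Likewise your final step, matching Hilbert polynomials to identify $X$ with the intersection of $c$ quadrics, is not an argument. The actual mechanism of \cite{IR}, used throughout this paper as Theorem~\ref{them:IR}, is sharper and runs entirely through the tangent map: $\tau_x$ embeds $\cL_x$ into $\P\bigl((T_xX)^{\vee}\bigr) \cong \P^{n-1}$ as a variety again cut out by $c$ quadrics; the key equivalence is that $X$ is a complete intersection if and only if $\cL_x \subset \P^{n-1}$ is, if and only if $p = n-1-c$. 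Under $n \geq 2c+1$ one has $2p \geq n-1$, so $\cL_x$ is smooth, irreducible and nondegenerate (Lemma~\ref{lem:irr}), and the Faltings--Netsvetaev criterion (Theorem~\ref{them:Net}) then forces $\cL_x$ to be a complete intersection, whence $X$ is. Your proposal gestures at this dictionary in its last paragraph but never actually invokes the equivalence that does the work, which is exactly where your ``principal obstacle'' sits unresolved.
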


As the extremal case, a variety $X \subset \P^{n+c}$ is called a {\it Hartshorne variety} if $n=2c$ and $X$ is not a complete intersection.
The Hartshorne conjecture is still widely open. On the other hand, P.~Ionescu and F.~Russo \cite{IR} proved that the Hartshorne conjecture holds for quadratic varieties, that is, varieties scheme-theoretically defined by quadratic equations. This is a generalization of J.M.~Landsberg's result \cite[Corollary~6.29]{Lands96} obtained by local differential geometric methods. Moreover, Ionescu and Russo also proved that the only quadratic Hartshorne varieties are the $6$-dimensional Grassmann variety $G\bigl(2, \C^5\bigr) \subset \P^{9}$ and the $10$-dimensional spinor variety $S^{10} \subset \P^{15}$. Let us quickly review the brilliant ideas of Ionescu and Russo's proof of \cite{IR}. If $X\subset \P^{n+c}$ is a~quadratic variety with small codimension, then $X$ is a smooth Fano variety covered by lines; in~\cite{IR}, an essentially important tool to study $X$ is the Hilbert scheme of lines on $X$ passing through a general point $x \in X$. We denote it by $\cL_x$ and the dual vector space of the tangent space $T_xX$ by $(T_xX)^{\vee}$; then $\cL_x$ is naturally embedded into \smash{$\P\bigl((T_xX)^{\vee}\bigr)\cong \P^{n-1}$}. A notable feature of $X$ is that $\cL_x\subset \P^{n-1}$ is once again a quadratic variety, and it is scheme-theoretically defined by at most $c$ quadratic equations; moreover, if $\cL_x\subset \P^{n-1}$ is a complete intersection, then so is~${X\subset \P^N}$.

The purpose of this paper is to give a classification of quadratic varieties with $n=2c-1$.

\begin{Theorem}\label{MT}
Let $X \subset \P^{n+c}$ be a nondegenerate smooth projective quadratic variety of dimension $n$. Assume that $n=2c-1$ and $X$ is not a complete intersection. Then $X$ is projectively equivalent to one of the following:
\begin{enumerate}\itemsep=0pt
\item[$(i)$] the Segre $3$-fold $\P^1 \times \P^2 \subset \P^5$;
\item[$(ii)$] a hyperplane section of the $6$-dimensional Grassmann variety $G\bigl(2, \C^5\bigr) \subset \P^{9}$;
\item[$(iii)$] a hyperplane section of the $10$-dimensional spinor variety $S^{10} \subset \P^{15}$;
\item[$(iv)$] an $11$-dimensional Fano variety $X \subset \P^{17}$ whose Fano index is $8$. Moreover, $X$ satisfies the following:
\begin{enumerate}\itemsep=0pt
\item[$(a)$] $X$ is covered by lines.
\item[$(b)$] For a general point $x \in X$, we denote by $\cL_x$ the Hilbert scheme of lines on $X$ passing through $x \in X$. Then
\[
\tau_x\colon \ \cL_x \to \P\bigl((T_xX)^{\vee}\bigr),\qquad  [\ell] \mapsto [T_x \ell]
\]
is a closed immersion, and $\cL_x \subset \P((T_xX)^{\vee})$ is projectively equivalent to a Gushel--Mukai $6$-fold $($see Definition~$\ref{def:GM}$ for the definition of a Gushel--Mukai $6$-fold$)$.
\end{enumerate}
\end{enumerate}
\end{Theorem}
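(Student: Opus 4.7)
The plan is to push the Ionescu--Russo method one codimension gap further. By \cite{IR}, any such $X$ is a smooth Fano manifold covered by lines, and for a general $x \in X$ the Hilbert scheme $\cL_x$, embedded in $\P((T_xX)^{\vee}) \cong \P^{n-1}$ via the tangent map, is itself a nondegenerate smooth quadratic variety scheme-theoretically cut out by at most $c$ quadrics; moreover $X$ is a complete intersection whenever $\cL_x$ is, so under our hypothesis $\cL_x$ is also quadratic and not a complete intersection. The problem therefore reduces to classifying the possible $\cL_x$ and then reconstructing $X$ from this variety of minimal rational tangents.

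Set $d := \dim \cL_x$ and $c' := \codim_{\P^{n-1}} \cL_x$; then $d + c' = 2(c-1)$, so $d$ and $c'$ have the same parity. The bound $c' \leq c$ gives $d \geq c - 2$; the quadratic Hartshorne theorem of \cite{IR} applied to $\cL_x$ gives $d \leq 2c'$; and a parity check rules out the boundary value $d = 2c'$, since the only Hartshorne quadratic varieties $G(2,\C^5) \subset \P^9$ and $S^{10} \subset \P^{15}$ sit in projective spaces of odd dimension while $\P^{n-1}$ has even dimension. The admissible numerics for $(d, c')$ are therefore very restricted, and the first nontrivial subcase $d = 2c' - 1$ corresponds to $\cL_x$ being a quadratic variety of the exact type classified by Theorem~\ref{MT}, but with strictly smaller parameters.

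I would then proceed by a case analysis on $c$ combining direct treatment of small $c$ with a bootstrapping scheme. For $c = 2, 3, 5$, classical facts about low-dimensional nondegenerate varieties (varieties of minimal degree, del Pezzo and Mukai varieties) identify $\cL_x$ up to projective equivalence and let one realize $X$ as items (i), (ii), (iii) by matching $\cL_x$ with a hyperplane section of the corresponding Hartshorne variety; the $c = 4$ case collapses because no admissible non-CI numerics arise, and large $c$ is ruled out by induction. In each step the reconstruction of $X$ from $\cL_x$ uses Cartan--Fubini type extension (Hwang--Mok) together with analysis of the Fano index $i_X = d + 2$.

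The main obstacle is case (iv), $c = 6$ and $(d, c') = (6, 4)$: here $\cL_x$ is a smooth nondegenerate quadratic $6$-fold of codimension $4$ in $\P^{10}$, defined by at most $6$ quadrics and not a complete intersection, and this numeric type falls outside both \cite{IR} and any recursive application of the theorem itself. The plan is to identify $\cL_x$ as a Gushel--Mukai $6$-fold by analyzing the linear system of quadrics through it: a pencil of these quadrics should cut out the cone $CG(2,\C^5) \subset \P^{10}$ over the Pl\"ucker Grassmannian, realizing $\cL_x$ as a linear section thereof. Once the VMRT is pinned down, the Fano index is forced to be $8$ and Cartan--Fubini extension determines $X$ up to projective equivalence. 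Unlike items (ii)--(iii), $X$ in (iv) is not a natural hyperplane section of an earlier Hartshorne variety but must be characterized abstractly through its VMRT, and that is where the main difficulty sits.
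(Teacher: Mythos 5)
Your skeleton (reduce to $\cL_x$ via Ionescu--Russo, case analysis on $c$, Mukai-type classifications, a Gushel--Mukai $6$-fold endpoint) matches the paper, but two load-bearing steps are missing or fail. First, the numerics. With $d+c'=2c-2$, your bounds $c-2\le d\le 2c'-1$ only confine $d$ to the range $c-2\le d\le \frac{4c-5}{3}$, a window whose width grows linearly in $c$; so the admissible pairs are \emph{not} ``very restricted'', your parity trick removes only the single value $d=2c'$, and the proposed induction treats only the stratum $d=2c'-1$ while saying nothing about the remaining values in the window, so it does not close. The paper's engine, absent from your proposal, is Netsvetaev's criterion (Theorem~\ref{them:Net}) applied to $\cL_x\subset\P^{2c-2}$, cut out by $c$ quadrics with $c\le\dim\cL_x+1$ (this hypothesis itself needs the refined bound $\dim\cL_x\ge c-1$ of Lemma~\ref{lem:nondeg}, not your $d\ge c-2$): since $\cL_x$ is not a complete intersection, it pins $\frac{3}{2}c-3\le\dim\cL_x<\frac{3}{2}c-2$ (Proposition~\ref{prop:dimLx}), so $\dim\cL_x$ is a single value and, via Theorem~\ref{them:HC} applied to $\cL_x$, $c\in\{4,6,8,10\}$ or $\{3,5,7\}$. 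Your case exclusions are also off: $c=4$ has perfectly admissible numerics ($\dim\cL_x=3$) and is killed by applying Mukai's classification (Theorem~\ref{them:Mukai}) and Zak's linear normality (Theorem~\ref{them:Zak:Lin}) to $X$ itself, not by numerics; $c=7$ and $c=10$ require a second iteration of the whole machinery to the lines $\cM_{[\ell]}$ through a general point of $\cL_x$, again with Netsvetaev; and $c=8$ --- untouched by your outline --- is the hardest case, requiring Proposition~\ref{prop:spinor} (a hyperplane section of $S^{10}$ is not scheme-theoretically cut out by $8$ quadrics, proved via the Ein--Shepherd-Barron Cremona transformation).

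Second, the reconstruction. The paper never recovers $X$ from $\cL_x$. For $c=3,5$ it classifies $X$ directly: $i_X=\dim\cL_x+2$ makes $X$ a del Pezzo $5$-fold, resp.\ a Mukai $9$-fold, and Theorems~\ref{them:dP} and~\ref{them:Mukai} give the hyperplane sections of $G\bigl(2,\C^5\bigr)$ and $S^{10}$ --- no Cartan--Fubini is needed or used. More seriously, your claim that in the $c=6$ case Cartan--Fubini extension ``determines $X$ up to projective equivalence'' cannot be executed: Hwang--Mok extension compares two \emph{given} Fano manifolds with matching VMRTs, and no model with Gushel--Mukai $6$-fold VMRT is known. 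This is exactly why item (iv) of the theorem lists only properties of a hypothetical $X$, and why Section~\ref{sec5} (cf.\ Lemma~\ref{lem:GM}) leaves its existence open; your plan would amount to settling precisely the question the paper cannot. Finally, your pencil-of-quadrics identification of $\cL_x$ with a linear section of the cone over $G\bigl(2,\C^5\bigr)$ is speculative; the paper instead proves $\cL_x\subset\P^{10}$ is a nondegenerate coindex-$3$ Fano of Picard number one covered by lines (Proposition~\ref{prop:VMRT}, using Kobayashi--Ochiai to exclude large index) and then eliminates the other Mukai models by Zak's Theorems~\ref{them:Zak:Lin} and~\ref{them:Zak:Sev}.
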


In this theorem, the assumption that $X$ is quadratic is essential because infinitely many smooth nondegenerate projective $3$-folds in $\P^5$ are not complete intersections (see, for instance, \cite[Section~2.2]{CK}). 
On the other hand, according to D.~Mumford's famous result \cite{Mum:quad}, any projective variety can be realized as a quadratic variety.
Although the same method as Ionescu and Russo \cite{IR} is used to prove Theorem~\ref{MT}, the number of subjects to be treated increases, and the discussion becomes more complicated. The results of Ionescu and Russo and the preliminary results used in the proof are summarized in Section~\ref{sec2}. Cases where the codimension of Theorem~\ref{MT} is less than or equal to $2$ are handled in Section~\ref{sec:codim2}. Cases where the codimension is greater than or equal to $3$ are treated in Section~\ref{sec:codim3}.
The author needs to determine whether the fourth case that appeared in Theorem~\ref{MT} occurs. If this variety existed, what kind of properties it would satisfy will be discussed in Section~\ref{sec5}.

\section{Preliminaries}\label{sec2}

\section*{Notation}
We employ the notation as in \cite{Har,IR,KM,RussoBk}.
\begin{itemize}\itemsep=0pt
\item We denote by $\P^n$ the projective space of dimension $n$ and by $Q^n$ a smooth quadric hypersurface of dimension $n$.
\item We denote by $G(r, \C^n)$ the Grassmann variety parametrizing $r$-dimensional linear subspaces of $\C^n$. We denote by $S^{10}$ the $10$-dimensional spinor variety, which is defined as an irreducible component of the orthogonal Grassmann variety $OG\bigl(5, \C^{10}\bigr)\subset G\bigl(5, \C^{10}\bigr)$ for a~non-degenerate quadratic form on $\C^{10}$ (see, for instance, \cite{Kuz18}).
\item For a vector space $V$, we denote by $V^{\vee}$ the dual vector space of $V$. The projectivization of $V$ is defined by
\[
\P(V):={\rm Proj}\Biggl(\bigoplus_{n=0}^{\infty}{\rm Sym}^k(V)\Biggr).
\]
\item A smooth projective variety $X$ is called {\it Fano } if the anticanonical divisor $-K_X$ is ample. For a smooth Fano variety $X$, the {\it Fano index} $i_X$ is defined as the maximal integer $r$ such that $-K_X$ is divisible by $r$ in ${\rm Pic}(X)$. The coindex of $X$ is defined by $\dim X+1-i_X$.
\item For projective varieties $X$, $Y$ and $F$, a smooth surjective morphism $f\colon X\to Y$ is called an {\it $F$-bundle} if any fiber of $f$ is isomorphic to $F$.
\item For a smooth projective variety $X$, we denote by $\rho_X$ the Picard number of $X$ and by $T_X$ the tangent bundle of $X$. 
\item For an embedded projective variety $X\subset \P^N$, we denote by ${\rm Sec}(X)\subset \P^N$ the secant variety of $X$.
\item For an embedded projective variety $X\subset \P^N$ and a point $o\in \P^{N+1}$, we denote by ${\rm Cone}(o, X)\subset \P^{N+1}$ the cone over $X$ with vertex $o$.
\item For an embedded projective variety $X\subset \P^N$ and a point $x\in X$, we denote by ${\mathbb T}_xX$ the projective tangent space of $X$ at $x$.
\end{itemize}

\begin{Set}\label{set:1}
Throughout the paper, we consider $X \subset \P^{N}$ a complex nondegenerate smooth projective variety of dimension $n$ and codimension $c$. Assume that $X \subset \P^{N}$ is scheme-theoretically defined by hypersurfaces of degrees $d_1 \geq d_2 \geq \cdots \geq d_m$. We may assume that $m$ is minimal. Let us set $d:=\sum_{i=1}^{c}(d_i-1)$.
Let $\cL$ be a family of lines on $X$. For a general point $x \in X$, we denote by $\cL_x$ the Hilbert scheme of lines on $X$ passing through $x \in X$. Denoting by $(T_xX)^{\vee}$ the dual vector space of $T_xX$, the {\it tangent map}
\[
\tau_x\colon\ \cL_x \to \P\bigl((T_xX)^{\vee}\bigr),\qquad [\ell] \mapsto [T_x \ell]
\]
is a closed immersion by \cite[Section~2.2.1]{RussoBk}.
 Throughout the paper, $\cL_x$ is viewed as a closed subscheme of $\P((T_xX)^{\vee})\cong \P^{n-1}$.
When $X$ is covered by lines, put $p:=\deg N_{\ell/X}$ for any line $[\ell] \in \cL_x$.
\end{Set}

\begin{Definition}
When $X$ is scheme-theoretically an intersection of quadric hypersurfaces, i.e., $d=c$, $X$ is called {\it quadratic}.
\end{Definition}

\begin{Example} For positive integers $r<n$, the Grassmann variety $G(r, \C^n)$ is scheme-theo\-reti\-cal\-ly an intersection of quadrics via the Pl\"ucker embedding. The quadrics are given by the Pl\"ucker relations. More generally, W.~Lichtenstein \cite{Lich82} showed that for every fundamental representation of a semisimple linear algebraic group, the orbit of the highest vector is cut out by a~system of quadrics. Moreover, Mumford~\cite{Mum:quad} proves that any projective variety can be realized as a~quadratic variety.
\end{Example}

We start to recall some classical results of projective geometry. The following is a refinement of Faltings' theorem~\cite{Fal81} due to Netsvetaev.

\begin{Theorem}[{Netsvetaev's criterion \cite[{Theorem~3.2}]{Net}}]\label{them:Net} Let $X$ be a variety as in Setup~$\ref{set:1}$ and assume that $m \leq n+1$. If \smash{$m < N-\frac{2}{3}n$} or \smash{$n\geq \frac{3}{4}N- \frac{1}{2}$}, then $X$ is a complete intersection.
\end{Theorem}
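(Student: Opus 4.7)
The plan is to select $c$ among the $m$ defining hypersurfaces $F_{1},\dots,F_{m}$ of degrees $d_{1}\geq\dots\geq d_{m}$ that still cut $X$ out scheme-theoretically; since $X$ is smooth of codimension $c$ in $\P^{N}$, any such $c$-subset is automatically a regular sequence and exhibits $X$ as a complete intersection. The generators give a surjection $\f\colon E:=\bigoplus_{i=1}^{m}\cO_{\P^{N}}(-d_{i})\twoheadrightarrow \cI_{X}$ whose restriction to $X$ lands on the conormal bundle $\cN^{\vee}_{X/\P^{N}}$ of rank $c$. Pointwise $m-c$ of the columns of $\f|_{X}$ are superfluous, and the problem becomes to implement a global change of basis of $E$ on $\P^{N}$ that realizes this redundancy.

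The key input is a Barth--Lefschetz type restriction theorem: for a smooth subvariety of small codimension in $\P^{N}$, the restriction maps on $H^{0}$ and $H^{1}$ of line bundles $\cO(k)$ are controlled in a range depending on $n-c$. The two numerical conditions $m < N-\tfrac{2}{3}n$ and $n\geq \tfrac{3}{4}N-\tfrac{1}{2}$ are designed precisely so that the relevant $H^{1}(X,\cO_{X}(d_{i}-d_{j}))$ vanish for every twist arising in the syzygies of the $F_{i}$, and so that every section in $H^{0}(X,\cO_{X}(d_{i}-d_{j}))$ lifts to $H^{0}(\P^{N},\cO(d_{i}-d_{j}))$. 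The auxiliary hypothesis $m\leq n+1$ then bounds how many such twists have to be controlled simultaneously, keeping the entire argument inside the Lefschetz range.

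Next, I would remove redundant generators one at a time. Given $m' > c$ generators whose restrictions to $X$ span $\cN^{\vee}_{X/\P^{N}}$, a rank count on $X$ produces a nontrivial $\cO_{X}$-linear syzygy; the Barth--Lefschetz lifting promotes this syzygy to a global $\cO_{\P^{N}}$-linear relation, which in turn yields an automorphism of $\bigoplus\cO_{\P^{N}}(-d_{i})$ that kills one column of $\f$. Minimality of $m$ in Setup~\ref{set:1} ensures that this column-killing does not produce a new minimal presentation with strictly fewer generators in contradiction with the starting data, so iterating $m-c$ times leaves $c$ hypersurfaces cutting $X$ out scheme-theoretically.

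The main obstacle is the careful numerical bookkeeping: at every iteration the twists $d_{i}-d_{j}$ must remain in the Barth--Lefschetz range, one must treat separately the case when several $d_{i}$ coincide (where cohomological vanishing has to be supplemented by minimality of the resolution), and the two numerical hypotheses must be re-verified after each elimination step. Tracking these estimates and extracting the sharp bound is precisely the content of Netsvetaev's refinement \cite{Net} of Faltings' original criterion \cite{Fal81}.
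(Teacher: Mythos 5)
The paper offers no proof of this statement: Theorem~\ref{them:Net} is imported verbatim from Netsvetaev \cite[Theorem~3.2]{Net}, so your sketch can only be measured against the known Faltings--Netsvetaev line of argument, whose general shape (the surjection $\bigoplus_{i=1}^m\cO_{\P^N}(-d_i)\to \cI_X$ restricting to the conormal bundle, plus small-codimension cohomology) you do reproduce. But the sketch has a genuine gap at its central step, the ``column-killing.'' An automorphism of $E=\bigoplus\cO_{\P^N}(-d_i)$ is triangular with respect to the degrees, since $\Hom\bigl(\cO(-d_i),\cO(-d_j)\bigr)=H^0\bigl(\cO(d_i-d_j)\bigr)$ vanishes unless $d_i\geq d_j$ and contains units only when $d_i=d_j$; so a lifted syzygy $\sum a_iF_i=0$ lets you delete a generator only if some coefficient $a_j$ is a nonzero \emph{constant}, i.e., only if $F_j$ lies in the ideal generated by the others on all of $\P^N$ --- which is exactly what the minimality of $m$ assumed in Setup~\ref{set:1} rules out. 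So under the stated hypotheses your iteration can never execute even once; the theorem's content is that the numerics force $m=c$, and the correct architecture is a contradiction argument from $m>c$ whose engine is not $H^1$-lifting of syzygies but the kernel bundle $K=\ker\bigl(\bigoplus\cO_X(-d_i)\to N^{\vee}_{X/\P^N}\bigr)$ of rank $m-c$: the hypothesis $m\leq n+1$ bounds $\rk K$ against $\dim X$, and the two inequalities are what force $K$ to split off line subbundles (via Chern-class computations valid in the Barth--Larsen range where $H^{2i}(X,\Z)\cong H^{2i}\bigl(\P^N,\Z\bigr)$) and hence produce the relation with a unit coefficient you need. Your opening claim that some $c$-element \emph{subset} of the $F_i$ would then cut out $X$ is also unjustified; in general a complete intersection presentation requires new combinations, possibly of different degrees.

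Two further points. Your diagnosis of where the numbers enter is misplaced: by Barth--Larsen, $H^1(X,\cO_X(k))=0$ for \emph{all} $k$ as soon as $2n-N\geq 1$, far weaker than either $m<N-\frac{2}{3}n$ or $n\geq\frac{3}{4}N-\frac{1}{2}$, so the sharp hypotheses cannot be accounted for by $H^1$-vanishing and $H^0$-lifting of twists $d_i-d_j$; they control the rank-versus-dimension and positivity constraints on $K$. And your final paragraph concedes that ``tracking these estimates and extracting the sharp bound is precisely the content of Netsvetaev's refinement'' --- that is, it cites the theorem under proof for exactly the part where both numerical hypotheses are actually used, which makes the proposal circular rather than merely incomplete.
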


\begin{Theorem}[{Zak's theorem on linear normality \cite[Chapter~II, Corollary~2.15]{Z}}]\label{them:Zak:Lin} Let $X$ be a~variety as in Setup~$\ref{set:1}$. If \smash{$N < \frac{3}{2}n+2$}, then ${\rm Sec}(X)=\P^N$.
\end{Theorem}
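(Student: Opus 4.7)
The plan is to adapt the Ionescu--Russo recursive strategy from the $n=2c$ Hartshorne case to our $n=2c-1$ setting, using the Hilbert scheme $\cL_x$ of lines through a general point $x\in X$ as the principal inductive device.

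First I would dispose of the small-codimension cases separately: $c=1$ forces $X$ to be a hypersurface and hence a complete intersection, so there is nothing to do; $c=2$ gives $n=3$, so $X\subset\P^5$ is a smooth nondegenerate quadratic threefold, and a direct classification of nondegenerate threefolds in $\P^5$ scheme-theoretically cut out by quadrics (appealing to standard classifications of Fano threefolds of small degree and Picard number) pins down $\P^1\times\P^2$ as the only non-complete-intersection example, giving case (i).

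For $c\geq 3$ I would proceed along the Ionescu--Russo template. One first shows that $X$ is Fano and covered by lines, so $\cL_x$ is nonempty and, via the tangent map $\tau_x$, embeds into $\P^{n-1}=\P^{2c-2}$ as a nondegenerate smooth quadratic variety scheme-theoretically cut out by at most $c$ quadrics. The key implication to exploit is that if $\cL_x$ is a complete intersection in $\P^{n-1}$ then so is $X$ in $\P^{n+c}$; hence under our hypothesis $\cL_x$ is itself a non-complete-intersection quadratic variety. One then feeds $\cL_x$ into the classification machinery: Netsvetaev's criterion (Theorem~\ref{them:Net}) bounds its codimension; the Ionescu--Russo Hartshorne theorem forces $\cL_x\in\{G(2,\C^5),S^{10}\}$ whenever $\dim\cL_x=2\,\codim_{\P^{n-1}}(\cL_x)$; and an inductive invocation of the present theorem governs the boundary case $\dim\cL_x=2\,\codim_{\P^{n-1}}(\cL_x)-1$. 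Running through the resulting arithmetic constraints on $c$ and $p=\deg N_{\ell/X}$ pins down the possibilities for $\cL_x$, and each possibility identifies $X$ (up to projective equivalence) with a hyperplane section of $G(2,\C^5)$, a hyperplane section of $S^{10}$, or the $11$-fold in $\P^{17}$ whose variety of lines through a general point is a Gushel--Mukai $6$-fold. Reconstructing $X$ from $\cL_x$ uses standard Fano-index, extremal-ray, and conic-connectedness arguments together with Zak's linear normality theorem (Theorem~\ref{them:Zak:Lin}) to control how $X$ sits inside its ambient $\P^{n+c}$.

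The main obstacle, I expect, is the combinatorial explosion introduced by the fact that the recursion is no longer cleanly codimension-decreasing as in \cite{IR}: when $\cL_x$ itself drops into the $n'=2c'-1$ regime, the inductive case (iv) can propagate, and one must verify that the interlocking numerical possibilities for $(c,p)$ really do close off to precisely the four listed varieties. Related to this, at the classification stage one can at best characterize case (iv) intrinsically (via its variety of lines being a Gushel--Mukai $6$-fold), since its existence is unsettled, and one must rule out its being projectively equivalent to a hyperplane section of a larger homogeneous variety or to a further exotic candidate through a careful projective-geometric analysis of the second fundamental form and of the secant structure of $X$.
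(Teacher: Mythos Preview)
Your proposal does not address the stated theorem at all. The statement in question is Zak's theorem on linear normality: for a nondegenerate smooth $X\subset\P^N$ with $N<\tfrac{3}{2}n+2$, one has ${\rm Sec}(X)=\P^N$. In the paper this is \emph{not} proved; it is simply quoted from \cite[Chapter~II, Corollary~2.15]{Z} as an external input. What you have written is instead a sketch of the proof of the paper's main classification result (Theorem~\ref{MT}), and indeed you even invoke Theorem~\ref{them:Zak:Lin} as a tool inside your argument.

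Consequently there is nothing here to compare: the statement to be justified is a foundational result of Zak about secant varieties, established by the tangency and connectedness theorems in \cite{Z}, whereas your text concerns the recursive Ionescu--Russo analysis of $\cL_x$ for quadratic varieties with $n=2c-1$. If the intent was to supply a proof of Theorem~\ref{them:Zak:Lin}, you would need to reproduce (or cite) Zak's argument via the tangency theorem, showing that ${\rm Sec}(X)\subsetneq\P^N$ forces $\dim{\rm Sec}(X)\geq\tfrac{3}{2}n+1$ and hence $N\geq\tfrac{3}{2}n+2$. If instead you meant to sketch Theorem~\ref{MT}, then the proposal is misfiled and should be compared against Sections~\ref{sec:codim2}--\ref{sec:codim3} rather than against Theorem~\ref{them:Zak:Lin}.
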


\begin{Theorem}[{Zak's classification of Severi varieties \cite[Chapter~IV, Theorem~4.7]{Z}}]\label{them:Zak:Sev} Let $X$ be a variety as in Setup~$\ref{set:1}$. If \smash{$N = \frac{3}{2}n+2$} and ${\rm Sec}(X) \neq \P^N$, then $X$ is projectively equivalent to one of the following:
\begin{enumerate}\itemsep=0pt
\item[$(i)$] the Veronese surface $v_2\bigl(\P^2\bigr) \subset \P^5$;
\item[$(ii)$] the Segre $4$-fold $\P^2 \times \P^2 \subset \P^8$;
\item[$(iii)$] the Grassmann variety $G\bigl(2, \C^6\bigr) \subset \P^{14}$;
\item[$(iv)$] the $E_6(\omega_1) \subset \P^{26}$, which is the projectivization of the highest weight vector orbit in the $27$-dimensional irreducible representation of a simple algebraic group of Dynkin type $E_6$.
\end{enumerate}
\end{Theorem}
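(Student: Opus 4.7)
The plan is to extend the strategy of Ionescu--Russo~\cite{IR}. Since $X \subset \P^{3c-1}$ is a nondegenerate quadratic variety of dimension $n = 2c-1$, the preliminaries of Section~\ref{sec2} imply that $X$ is a Fano manifold of Picard number one, covered by lines, so for a general $x \in X$ the Hilbert scheme $\cL_x$ is nonempty. The crucial input from~\cite{IR} is that $\cL_x \subset \P((T_xX)^{\vee}) \cong \P^{2c-2}$ is itself scheme-theoretically cut out by at most $c$ quadrics, and that $X$ is a complete intersection precisely when $\cL_x$ is. Assuming $X$ is not a complete intersection, the classification reduces to (a) classifying the admissible quadratic $\cL_x \subset \P^{2c-2}$ and (b) reconstructing $X$ from~$\cL_x$.

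The analysis splits by codimension, matching Sections~\ref{sec:codim2} and~\ref{sec:codim3}. The low-codimension case $c \leq 2$ is handled by Netsvetaev's criterion (Theorem~\ref{them:Net}) together with the classical classification of smooth quadratic $3$-folds in $\P^5$, isolating $\P^1 \times \P^2$ in case~(i). For $c \geq 3$, denote the codimension of $\cL_x$ in $\P^{2c-2}$ by $c'$ and its dimension by $n' = 2c-2-c'$. If $n' \geq 2c'+1$, the quadratic Hartshorne theorem of~\cite{IR} applied to $\cL_x$ makes $\cL_x$ a complete intersection, contradicting our hypothesis. The extremal range $n' = 2c'$ never actually occurs, since matching with the Ionescu--Russo classification $\cL_x \in \{G(2,\C^5) \subset \P^9,\, S^{10} \subset \P^{15}\}$ would require $2c-2 \in \{9,15\}$, both non-integer. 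The sub-extremal range $n' = 2c'-1$ invites a recursive application of the main theorem to $\cL_x$; scanning the four possibilities for $\cL_x$, only $\cL_x$ a hyperplane section of $G(2,\C^5)$ survives the integrality and internal consistency checks, producing case~(iii) with $c = 5$. The lower range $n' \leq 2c'-2$ must be handled by ad-hoc arguments, yielding case~(ii) for $c = 3$ (with $\cL_x$ a cubic scroll in $\P^4$) and case~(iv) for $c = 6$ (with $\cL_x$ a Gushel--Mukai $6$-fold in $\P^{10}$).

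The reconstruction of $X$ from $\cL_x$ is bundle-theoretic. Setting $p := \deg N_{\ell/X}$ for a general line $\ell$ through $x$, one has $\dim \cL_x = p$ and $i_X = p + 2$ (using $\rho_X = 1$), which determines the Fano index. Combined with the degree of $X$ and the classification of Fano manifolds of large index due to Mukai and Fujita, this pins down $X$ uniquely in each case. In case~(iv), the Gushel--Mukai $6$-fold as $\cL_x \subset \P^{10}$ yields $i_X = 8$, $\dim X = 11$, $X \subset \P^{17}$; properties~(a) and~(b) follow from the general setup in~\ref{set:1} together with the reconstruction.

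The principal obstacle is ruling out intermediate codimensions, particularly $c = 4$ and $c \geq 7$. Many numerically allowable pairs $(n', c')$ with $n' + c' = 2c - 2$ must be excluded: one must show either that no quadratic $\cL_x$ of the required shape exists in $\P^{2c-2}$, or that such an $\cL_x$ would force $X$ to be a CI or to violate its Fano-index or tangential geometry. The tools required include Zak's linear normality and Severi classification (Theorems~\ref{them:Zak:Lin} and~\ref{them:Zak:Sev}), computation of $h^0(N_{\ell/X})$, and compatibility of the quadratic equations of $X$ with those of~$\cL_x$. The most delicate step is $c = 6$: establishing that $\cL_x$ is precisely a Gushel--Mukai $6$-fold (and not some nearby quadratic variety) will require combining the full structural theory of Gushel--Mukai manifolds with the second fundamental form of~$X$ and the exclusion of all other candidates in $\P^{10}$ with the right numerics.
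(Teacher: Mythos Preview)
Your proposal addresses the wrong theorem. The statement under review is Zak's classification of Severi varieties (Theorem~\ref{them:Zak:Sev}): smooth nondegenerate $X^n\subset\P^N$ with $N=\tfrac{3}{2}n+2$ and ${\rm Sec}(X)\neq\P^N$ are exactly the Veronese surface $v_2(\P^2)\subset\P^5$, the Segre $4$-fold $\P^2\times\P^2\subset\P^8$, the Grassmannian $G(2,\C^6)\subset\P^{14}$, and $E_6(\omega_1)\subset\P^{26}$. In the paper this is a \emph{cited} result from \cite[Chapter~IV, Theorem~4.7]{Z}; no proof is given or expected, and it is used only as a black box. What you have written instead is a sketch of the proof of the paper's main result, Theorem~\ref{MT}: your hypotheses ($X\subset\P^{3c-1}$ quadratic of dimension $2c-1$), your four output cases ($\P^1\times\P^2$, hyperplane sections of $G(2,\C^5)$ and $S^{10}$, and an $11$-fold in $\P^{17}$), and your entire argument via $\cL_x$ all pertain to Theorem~\ref{MT}, not to Zak's theorem. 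The hypotheses of Theorem~\ref{them:Zak:Sev} say nothing about $X$ being quadratic or about $n=2c-1$; conversely, for the varieties in Theorem~\ref{MT} one has $N=3c-1$, which is \emph{not} $\tfrac{3}{2}n+2=3c+\tfrac{1}{2}$, so your setup never even lands in the Severi range.

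Even read as a sketch of Theorem~\ref{MT}, the argument has structural errors. Your recursive step ``$n'=2c'-1$'' applied to $\cL_x$ is circular (you invoke the theorem you are proving) and numerically off: the cases $c=3$ and $c=5$ in the paper are \emph{not} obtained by recursion but directly from the classifications of del Pezzo and Mukai varieties (Theorems~\ref{them:dP} and~\ref{them:Mukai}) applied to $X$ itself, once $i_X$ is known. Your claim that $c=3$ falls in a ``lower range $n'\le 2c'-2$'' with $\cL_x$ a cubic scroll is not how the paper proceeds. The paper's actual organization is: bound $\dim\cL_x$ via Netsvetaev (Proposition~\ref{prop:dimLx}), split by parity of $c$, list the finitely many surviving $c$, and then for each $c$ either identify $X$ via Fujita/Mukai or rule it out (e.g., $c=8$ is excluded by showing a hyperplane section of $S^{10}$ cannot be cut by $8$ quadrics, Proposition~\ref{prop:spinor}). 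None of this appears in your outline.
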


\if0
\begin{Set}\label{set:2}
For a general point $x \in X$, let $\cL_x$ be the Hilbert scheme of lines on $X$ passing through $x \in X$. Then the {\it tangent map} \[\tau_x\colon \cL_x \to \P((T_xX)^{\vee}); [\ell] \mapsto [T_x \ell]\] is a closed immersion. Throughout the paper, $\cL_x$ is viewed as a closed subscheme of $\P((T_xX)^{\vee})\cong \P^{n-1}$, and we call it the {\it variety of minimal rational tangents (VMRT)} of $X$. When $X$ is covered by lines, put $p:=\deg N_{\ell/X}$ for any line $[\ell] \in \cL_x$.
\end{Set}
\fi

\begin{Lemma}\label{lem:irr} For a variety $X$ as in Setup~$\ref{set:1}$, the following hold:
\begin{enumerate}\itemsep=0pt
\item[$(i)$] If $X$ is covered by lines, then $p=\dim \cL_x$.
\item[$(ii)$] If $2p \geq n-1$, then $\cL_x \subset \P^{n-1}$ is smooth, irreducible and nondegenerate.
\end{enumerate}
\end{Lemma}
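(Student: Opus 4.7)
For (i), the plan is to use deformation theory of the Hilbert scheme of lines through a fixed point. Since $X$ is covered by lines, for a general $x \in X$ and a general $[\ell] \in \cL_x$ the line $\ell$ is free, so $N_{\ell/X} \cong \bigoplus_{i=1}^{n-1}\cO_\ell(a_i)$ with $a_i \geq 0$ and $\sum_i a_i = \deg N_{\ell/X} = p$. At $[\ell]$ the Hilbert scheme $\cL_x$ has tangent space $H^0(\ell, N_{\ell/X}(-1))$ and obstruction space $H^1(\ell, N_{\ell/X}(-1))$; since $a_i - 1 \geq -1$ we have $H^1 = 0$ and $h^0 = \sum_i a_i = p$, so $\cL_x$ is smooth of dimension $p$ at $[\ell]$, hence $\dim \cL_x = p$.

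For (ii), assume $2p \geq n-1$. Smoothness of $\cL_x$ at an arbitrary $[\ell]$ is equivalent to $H^1(\ell, N_{\ell/X}(-1)) = 0$, which in turn is equivalent to the freeness of $\ell$; by the standard fact that every line through a general point is free, $\cL_x$ is smooth everywhere. For irreducibility, if $\cL_x$ admitted two distinct irreducible components $C_1, C_2 \subset \P^{n-1}$, smoothness would force each to have dimension $p$, and $\dim C_1 + \dim C_2 = 2p \geq n-1 = \dim \P^{n-1}$ would give $C_1 \cap C_2 \neq \emptyset$ by the classical projective intersection bound; any intersection point is singular in $\cL_x$, a contradiction. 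Hence $\cL_x$ is irreducible.

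Nondegeneracy is the main obstacle. I would argue by contradiction: if $\cL_x$ were contained in a hyperplane $H \subset \P((T_xX)^\vee)$, then the tangent directions at $x$ of lines through $x$ would all lie in a hyperplane $W \subset T_xX$, so the union $Y \subset X$ of lines through $x$ would be contained in the $(n-1)$-dimensional projective linear subspace of $\P^{n+c}$ through $x$ spanned by $W$. Using that $Y$ has dimension $p+1$ with $2p \geq n-1$, together with the nondegeneracy of $X \subset \P^{n+c}$, a Zak-type tangencies argument should yield the desired contradiction. Carrying out this step carefully is the most delicate point of the proof; smoothness and irreducibility fall out of a direct normal-bundle computation and a projective dimension count, but extracting nondegeneracy requires exploiting the global geometry of $X$ in $\P^{n+c}$ rather than just the local structure of $\cL_x$.
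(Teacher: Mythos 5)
Parts (i) and the smoothness--irreducibility portion of (ii) are correct and are essentially the arguments behind the result the paper invokes (the paper gives no argument of its own, only the citation to \cite{Hw}): every line through a general point is free, so the tangent space $H^0\bigl(\ell, N_{\ell/X}(-1)\bigr)$ has dimension $p$ and the obstruction space $H^1\bigl(\ell, N_{\ell/X}(-1)\bigr)$ vanishes, giving smoothness of $\cL_x$ of pure dimension $p$; irreducibility then follows from the projective dimension theorem exactly as you say, since $\tau_x$ is a closed immersion and two $p$-dimensional components with $2p\geq n-1$ would meet in a point that is singular on $\cL_x$. One caveat you should make explicit: your count gives each component dimension equal to \emph{its} $p$, so the argument tacitly uses that $p=\deg N_{\ell/X}$ is constant on all of $\cL_x$ (this holds in the paper's applications, where ${\rm Pic}(X)\cong\Z$ and $i_X=p+2$, so $-K_X\cdot\ell$ is the same for all lines).

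The nondegeneracy step, which you yourself flag as unfinished, is a genuine gap, and the route you propose cannot work. After your reduction, the data available are: $Y:=\Loc(\cL_x)$ of dimension $p+1$ with $2p\geq n-1$, contained in an $(n-1)$-dimensional linear subspace of $\P^{n+c}$, and $X\subset\P^{n+c}$ nondegenerate. These facts are mutually consistent, so no Zak-type tangency argument can extract a contradiction from them: on the Segre $3$-fold $X=\P^1\times\P^2\subset\P^5$ (nondegenerate, $n=3$), the family of lines in the $\P^2$-direction through a general point $x$ has $p=1$, so $2p=n-1$; its tangent directions fill a \emph{line} in $\P\bigl((T_xX)^{\vee}\bigr)\cong\P^2$, and its locus $\{t\}\times\P^2$ is an irreducible surface of dimension $p+1$ sitting inside a linear $\P^2\subset\P^5$. (This example is excluded from the lemma only because there the full $\cL_x$ is reducible with non-constant $p$, not because of the ambient embedding.) The actual proof of nondegeneracy --- what \cite{Hw} supplies --- is not projective geometry of $X\subset\P^{n+c}$ at all: if $\cL_x$ spanned a proper linear subspace for general $x$, the spans would define a proper distribution $W\subsetneq T_X$; a normal-bundle computation along a line (using $T_X|_{\ell}\cong\cO(2)\oplus\cO(1)^{p}\oplus\cO^{n-1-p}$) shows lines, and hence chains of lines, issuing from a general point stay tangent to $W$, while $X$ is connected by chains of lines (available here since $\rho_X=1$ in the situations where the lemma is applied, by Theorem~\ref{them:IR}), forcing $W=T_X$, a contradiction. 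So the missing ingredient is this global chain-connectedness/distribution argument; nondegeneracy of $X$ in its embedding is essentially irrelevant.
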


\begin{proof} See \cite[Proposition~1.5, Theorems~1.4 and~2.5]{Hw}.
\end{proof}

\begin{Theorem}[{\cite[Theorems~2.4 and~3.8]{IR}}]\label{them:IR} Let $X$ be a variety as in Setup~$\ref{set:1}$. Assume that~$X$ is quadratic.
\begin{enumerate}\itemsep=0pt
\item[$(i)$] If $n \geq c$, then $X$ is Fano.
\item[$(ii)$] If $n \geq c+1$, then $X$ is covered by lines. Moreover, $\cL_x \subset \P^{n-1}$ is scheme-theoretically defined by $c$ independent quadratic equations.
\item[$(iii)$] If $n \geq c+2$, then $X$ is a Fano variety with ${\rm Pic}(X) \cong \Z$ and $i_X=p+2$. Furthermore, the following are equivalent to each other:
\begin{enumerate}\itemsep=0pt
\item[$(a)$] $X \subset \P^N$ is a complete intersection.
\item[$(b)$] $\cL_x \subset \P^{n-1}$ is a complete intersection of codimension $c$.
\item[$(c)$] $p=n-1-c$.
\end{enumerate}
\end{enumerate}
\end{Theorem}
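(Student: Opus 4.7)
The plan proceeds through the three parts in order, using the scheme-theoretic ideal of quadrics as the main input and propagating information from $X$ to a general tangent space and back.

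For part (i), the key is to exhibit $X$ inside a complete intersection. By Bertini applied to the linear system of quadrics in the ideal of $X$, I would pick general $Q_1,\ldots,Q_c$ from the ideal of $X$; their common zero locus $Y$ is a complete intersection of codimension $c$ in $\P^{n+c}$ containing $X$, with $\omega_Y\cong\cO_Y(c-n-1)$, which is anti-ample when $n\geq c$. Since $X$ is smooth and has the same dimension as $Y$, it is a union of connected components of $Y_{\rm red}$; then either direct liaison theory relating $X$ to the residual subscheme $X'\subset Y$, or a short Fujita-type argument using $\deg X\leq \deg Y=2^c$, upgrades the anti-ampleness of $\omega_Y|_X$ to ampleness of $-K_X$.

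For part (ii), work at a general point $x=[1:0:\cdots:0]$ in affine coordinates. Each defining quadric takes the form $Q_i=L_i(y)+R_i(y)$, where the linear parts $L_i$ cut out $T_xX$; smoothness of codimension $c$ forces exactly $c$ of the $L_i$ to be independent. A row-reduction replaces all other $Q_i$ by quadrics with vanishing linear part, showing that the restrictions $R_i|_{T_xX}$ span a space of dimension at most $c$ (and minimality of $m$ together with nondegeneracy gives the equality). The lines through $x$ correspond exactly to the zero locus in $\P((T_xX)^\vee)$ of these restricted quadrics, giving the $c$ independent quadratic equations. A dimension count shows $\dim\cL_x\geq n-1-c\geq 0$ when $n\geq c+1$, so lines through $x$ exist, and a standard incidence-correspondence argument propagates this to every general point of $X$, proving the covering property.

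For part (iii), with $n\geq c+2$ the Barth--Larsen theorem yields $H^2(X,\Z)\cong H^2(\P^{n+c},\Z)=\Z$, hence $\Pic(X)\cong\Z\cdot H$. For a line $\ell\subset X$, adjunction combined with Lemma~\ref{lem:irr}~(i) gives $-K_X\cdot\ell=\deg N_{\ell/X}+2=p+2$, forcing $-K_X=(p+2)H$ and $i_X=p+2$. For the equivalences, (a)$\Rightarrow$(c) is pure adjunction on a CI of $c$ quadrics giving $i_X=n+1-c=p+2$; and (c)$\Rightarrow$(b) holds because by part~(ii) $\cL_x$ is already cut out by $c$ quadrics in $\P^{n-1}$, and the dimension equality $\dim\cL_x=n-1-c$ forces them to form a regular sequence, hence a complete intersection. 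The main obstacle is the reverse (b)$\Rightarrow$(a); for this, I would construct a complete intersection $Y$ of $c$ quadrics from the ideal of $X$ as in part~(i) and compare it to $X$ via their VMRTs at a general point, where the CI hypothesis on $\cL_x\subset\P^{n-1}$ is strong enough to force the local tangent cones of $X$ and $Y$ to coincide to arbitrary order, leading by a rigidity/connectedness argument to $X=Y$. This propagation from the VMRT back to the global embedding is the deepest technical step and would be the hardest part of the proof.
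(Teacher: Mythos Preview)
The paper does not prove this theorem at all: it is stated as a quotation of \cite[Theorems~2.4 and~3.8]{IR} and used as a black box throughout. So there is no ``paper's own proof'' to compare your proposal against; any argument you give is necessarily going beyond what the present paper contains.

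That said, as a sketch of Ionescu--Russo's actual argument your proposal is uneven. Part~(ii) is essentially right and is the standard second-fundamental-form computation. Part~(i), however, is not clean: once you only know that $X$ is an irreducible component of the complete intersection $Y=Q_1\cap\cdots\cap Q_c$, you cannot conclude $K_X=K_Y|_X$, so ``anti-ampleness of $\omega_Y|_X$'' does not by itself give ampleness of $-K_X$; the vague appeal to liaison or a ``Fujita-type argument'' is not a proof. In \cite{IR} the Fano statement is obtained instead from the positivity of the normal bundle along lines (equivalently, from the existence of free lines produced in part~(ii) together with an adjunction/degree bound in the borderline case $n=c$). For part~(iii), your (a)$\Rightarrow$(c) and (c)$\Rightarrow$(b) are fine, but your plan for (b)$\Rightarrow$(a) --- matching tangent cones of $X$ and an auxiliary complete intersection $Y$ ``to arbitrary order'' and then invoking an unspecified rigidity principle --- is not how the implication is established and, as written, is not a workable strategy: nothing prevents $X\subsetneq Y$ even when all infinitesimal data at a general point coincide. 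Ionescu--Russo's route is global, via secant/tangential-projection geometry and the equality $p=n-1-c$, rather than a local tangent-cone comparison.
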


As a byproduct of Theorem~\ref{them:IR}, Ionescu and Russo proved that the Hartshorne conjecture on complete intersections holds for quadratic varieties. They also classified Hartshorne varieties among quadratic varieties.

\begin{Theorem}[{\cite[Theorems~3.8 and~3.9]{IR}}]\label{them:HC} Let $X$ be a variety as in Setup~$\ref{set:1}$. Assume that~$X$ is quadratic.
\begin{enumerate}\itemsep=0pt
\item[$(i)$] The Hartshorne conjecture for quadratic varieties: If $n \geq 2c+1$, then $X$ is a complete intersection.
\item[$(ii)$] Classification of Hartshorne varieties for quadratic varieties: If $n =2c$ and $X$ is not a~complete intersection, then $X$ is projectively equivalent to one of the following:
\begin{enumerate}\itemsep=0pt
\item[$(a)$] the $6$-dimensional Grassmann variety $G\bigl(2, \C^5\bigr) \subset \P^{9}$;
\item[$(b)$] the $10$-dimensional spinor variety $S^{10} \subset \P^{15}$.
\end{enumerate}
\end{enumerate}
\end{Theorem}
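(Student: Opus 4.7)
The plan is to adapt the VMRT method of Ionescu and Russo \cite{IR} to the boundary case $n = 2c-1$. By Theorem~\ref{them:IR}, for $c \geq 3$ (so that $n \geq c+2$), $X$ is a Fano variety with $\operatorname{Pic}(X) \cong \Z$ and Fano index $i_X = p+2$, where $p := \dim \cL_x$; the VMRT $\cL_x \subset \P^{n-1} = \P^{2c-2}$ is a smooth nondegenerate quadratic variety, scheme-theoretically defined by at most $c$ quadrics, of codimension $c' := 2c-2-p$. The non-c.i.\ hypothesis on $X$ is equivalent (by Theorem~\ref{them:IR}(iii)) to $c' \leq c-1$, i.e.\ $p \geq c-1$, which in particular makes Lemma~\ref{lem:irr} apply automatically. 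A convenient identity is
\[
\operatorname{coindex}(X) \;=\; n + 1 - i_X \;=\; 2c - (p+2) \;=\; c',
\]
so the codimension of the VMRT equals the coindex of $X$; this is the organizing principle of the proof.

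The low-codimension cases $c \leq 2$ are handled separately in Section~\ref{sec:codim2}: there are no non-c.i.\ examples for $c \leq 1$, and $c = 2$ yields only the Segre $3$-fold $\P^1 \times \P^2 \subset \P^5$ (case~(i)) via a direct classification of quadratic $3$-folds in $\P^5$. For $c \geq 3$, I would iterate over admissible values of $c'$. The value $c' = 1$ is excluded at once, since coindex~$1$ forces $X$ to be a quadric hypersurface by Kobayashi--Ochiai, contradicting the non-c.i.\ hypothesis. For $c' \geq 2$, the small-coindex classification of Fano manifolds of Picard rank one applies: $c' = 2$ (Fujita's del Pezzo classification) leaves, once quadraticness and $n = 2c-1$ are imposed, only the hyperplane section of $G(2,\C^5) \subset \P^9$ (case~(ii), $c = 3$); $c' = 3$ (Mukai's coindex-$3$ classification) leaves only the hyperplane section of $S^{10} \subset \P^{15}$ (case~(iii), $c = 5$); and $c' = 4$ together with the quadraticness constraint isolates $c = 6$, $n = 11$, forcing $\cL_x$ to be a Gushel--Mukai $6$-fold (case~(iv)).

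To exclude $c' \geq 5$, I would apply Theorem~\ref{them:HC} to $\cL_x$ itself. The Hartshorne-border regime $p = 2c'$ would require $\cL_x$ to be projectively equivalent to $G(2,\C^5) \subset \P^9$ or $S^{10} \subset \P^{15}$; but both have odd ambient projective dimension, whereas $\dim \P^{n-1} = 2c-2$ is even, so the required $c$ fails integrality. Hence $p \geq 2c'$ forces $\cL_x$ to be a complete intersection of codim $c'$ in $\P^{2c-2}$, and the subcritical regime $p \leq 2c'-1$ is handled by iterating the VMRT construction inside $\cL_x$ and invoking Netsvetaev's criterion (Theorem~\ref{them:Net}) together with Zak's theorems on linear normality and on Severi varieties (Theorems~\ref{them:Zak:Lin}, \ref{them:Zak:Sev}). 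The Severi classification applied to $\cL_x$ in particular pins down the $(c, c') = (5, 3)$ and related numerology, providing an independent cross-check on case~(iii) and on the exclusion of competing families.

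The main obstacle will be case~(iv): identifying $\cL_x$ precisely as a Gushel--Mukai $6$-fold, rather than some other smooth quadratic $6$-fold of codim $4$ in $\P^{10}$, requires a further recursive application of Theorem~\ref{them:IR} to $\cL_x$ and a delicate analysis of the tangent map $\tau_x$; properties (a) and (b) of the statement then must be extracted from the line-incidence structure on $X$ via the isomorphism between the family of lines through $x$ and the VMRT. A secondary difficulty arises in the Fujita/Mukai steps: one must verify that no other family on the coindex-$2$ or coindex-$3$ lists is compatible with the triple constraint $n = 2c-1$, quadraticness of $X \subset \P^{n+c}$, and quadraticness of the VMRT, which demands explicit case-by-case inspection of the defining equations of each candidate.
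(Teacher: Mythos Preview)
Your proposal does not address the stated theorem at all. Theorem~\ref{them:HC} is the Ionescu--Russo result for the regimes $n\ge 2c+1$ and $n=2c$; it is quoted in the paper from \cite[Theorems~3.8 and~3.9]{IR} with no proof, as a preliminary tool. What you have outlined instead is a proof of Theorem~\ref{MT}, the paper's main result for the next case $n=2c-1$: your case list (Segre $3$-fold, hyperplane sections of $G(2,\C^5)$ and $S^{10}$, the putative $11$-fold with Gushel--Mukai VMRT) is exactly the list in Theorem~\ref{MT}, not in Theorem~\ref{them:HC}. Worse, your argument is circular relative to the stated target: you explicitly invoke Theorem~\ref{them:HC} (``To exclude $c'\ge 5$, I would apply Theorem~\ref{them:HC} to $\cL_x$ itself''), so even on its own terms the proposal cannot serve as a proof of Theorem~\ref{them:HC}.

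If the intent was really to sketch a proof of Theorem~\ref{MT}, then your organizing principle (iterate over the coindex $c'=\operatorname{codim}_{\P^{n-1}}\cL_x$ and use the Fujita/Mukai classifications) is close in spirit to the paper's approach, though the paper slices the problem by parity of $c$ and first pins down $\dim\cL_x$ via Netsvetaev's criterion (Proposition~\ref{prop:dimLx}) before reducing to a short finite list of $c$-values. One substantive difference: for $c=8$ the paper does not dispose of the candidate by a pure numerology/Netsvetaev argument but needs the ad hoc Proposition~\ref{prop:spinor} showing that a hyperplane section of $S^{10}\subset\P^{15}$ cannot be cut out by only $8$ quadrics; your outline does not account for this step.
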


Finally, we conclude this section by reviewing the classification of smooth Fano varieties with large indices.

\begin{Theorem}[\cite{Fuj1,Fuj2}]\label{them:dP} Let $X$ be an $n$-dimensional smooth Fano variety with index $i_X=n-1$ $($i.e., a smooth del Pezzo variety$)$, whose Picard group is generated by a very ample line bundle. Then $X$ is isomorphic to one of the following:
\begin{enumerate}\itemsep=0pt
\item[$(i)$] a hypersurface of degree $3$;
\item[$(ii)$] a complete intersection of two quadric hypersurfaces;
\item[$(iii)$] a linear section of the Grassmann variety $G\bigl(2,\C^5\bigr) \subset \P\bigl(\bigl(\bigwedge^2\C^5\bigr)^{\vee}\bigr)$.
\end{enumerate}
\end{Theorem}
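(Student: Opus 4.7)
The plan is to follow Fujita's classical ladder and $\Delta$-genus strategy. Write $H$ for the very ample ample generator of $\Pic(X)$, so that $-K_X=(n-1)H$, set $d:=H^n$, and embed $X\subset\P^N$ via $|H|$. Since $kH-K_X=(k+n-1)H$ is ample for every $k\ge 0$, Kodaira vanishing gives $h^i(kH)=0$ for $i>0$ and $k\ge 1$; in particular the cohomology of $H$ can be read off rung by rung along a ladder of hyperplane sections.

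First I would construct the ladder. By Bertini, valid because $H$ is very ample, a sufficiently general complete intersection $C:=H_1\cap\cdots\cap H_{n-1}$ with $H_i\in|H|$ is smooth, irreducible, and projectively normal. Iterated adjunction yields $\omega_C\iso\cO_C$, so $C$ is an elliptic curve, and $H|_C$ is very ample of degree $d$; Riemann--Roch then gives $h^0(C,H|_C)=d$. Pulling back up the ladder through the surjections $H^0(X,H)\twoheadrightarrow\cdots\twoheadrightarrow H^0(C,H|_C)$ coming from the vanishing above yields $h^0(X,H)=n+d-1$, so that $N=n+d-2$ and $\Delta(X,H)=1$; hence $C\subset\P^{d-1}$ is an elliptic normal curve of degree $d\ge 3$.

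Next I would show $d\le 5$, which is the delicate step where Fujita's deeper analysis enters. Elliptic normal curves $C\subset\P^{d-1}$ exist for every $d\ge 3$, but the constraint $\Pic(X)=\Z\cdot H$ with $H$ very ample rules out the higher-degree extensions. All polarized pairs $(X,H)$ with $\Delta(X,H)=1$, $\dim X\ge 3$, and degree $d\ge 6$ that appear in Fujita's classification carry $\rho(X)\ge 2$---they are products and projective bundles such as $\P^1\times\P^1\times\P^1$, $\P^2\times\P^2$, $\P(T_{\P^2})$, or $\Bl_p\P^3$---all excluded by our hypothesis. Hence $d\in\{3,4,5\}$.

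Finally I would identify $X$ in each case by inverting the ladder, lifting the defining equations of $C$ through the surjections $H^0(\cI_X(kH))\twoheadrightarrow H^0(\cI_C(kH|_C))$ furnished by projective normality and Kodaira vanishing. When $d=3$, $C\subset\P^2$ is a plane cubic, so $X\subset\P^{n+1}$ is a smooth cubic hypersurface. When $d=4$, $C\subset\P^3$ is a $(2,2)$-complete intersection, and $X\subset\P^{n+2}$ inherits the structure of a smooth complete intersection of two quadrics. When $d=5$, $C\subset\P^4$ is classically a linear section of $G(2,\C^5)\subset\P^9$ cut out by five Pl\"ucker quadrics, and the ladder inversion shows that these same quadrics cut out $X\subset\P^{n+3}$ as a linear section of $G(2,\C^5)$. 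The main obstacle is the degree bound of the previous paragraph, where Fujita's delicate analysis of low-$\Delta$-genus polarized pairs is essential; a secondary difficulty is the $d=5$ identification, which requires verifying that the five Pl\"ucker quadrics lift consistently through $n-1$ rungs of the ladder to define the correct linear section of the Grassmannian rather than a larger or smaller subvariety.
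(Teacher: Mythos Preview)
The paper does not prove this statement; Theorem~\ref{them:dP} is simply quoted from Fujita \cite{Fuj1,Fuj2} as a background classification and used as a black box, so there is no proof in the paper to compare against.

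Your sketch is a faithful outline of Fujita's own $\Delta$-genus-one ladder argument, and the computations $h^0(X,H)=n+d-1$, $\Delta(X,H)=1$, and the identification of the bottom rung as an elliptic normal curve are all correct. You have also correctly located the two genuine difficulties. First, your degree bound $d\le 5$ is obtained by appealing to the list of $\Delta$-genus-one polarized manifolds with $d\ge 6$ and observing that they all have $\rho\ge 2$; but that list \emph{is} precisely the output of \cite{Fuj1,Fuj2}, so at this step you are essentially invoking the theorem you set out to prove. A self-contained argument here needs Fujita's structure theory for $\Delta=1$ ladders, which forces a fibration over a positive-dimensional base once $d$ is large. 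Second, in the $d=5$ case the surjection $H^0(\cI_X(2))\twoheadrightarrow H^0(\cI_C(2))$ tells you that the five Pfaffian quadrics cutting out $C\subset\P^4$ lift to quadrics through $X$, but it does not by itself guarantee that their common zero locus in $\P^{n+3}$ is irreducible of the right dimension, nor that it is a linear section of $G(2,\C^5)$ rather than some degeneration; this is where Fujita's extension theory does real work. So your outline is sound, but at both flagged steps one must genuinely cite or reproduce Fujita's machinery rather than sketch around it.
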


\begin{Theorem}[\cite{Mu}]\label{them:Mukai} Let $X$ be an $n$-dimensional smooth Fano variety with index $i_X=n-2$ $($i.e., a smooth Mukai variety$)$, whose Picard group is generated by a very ample line bundle. If~${n\geq 4}$, then $X$ is isomorphic to one of the following:
\begin{enumerate}\itemsep=0pt\setlength{\leftskip}{0.1cm}
\item[$(i)$] a hypersurface of degree $4$;
\item[$(ii)$] a complete intersection of a smooth quadric hypersurface and a cubic hypersurface;
\item[$(iii)$] a complete intersection of three quadric hypersurfaces;
\item[$(iv)$] a linear section of ${\sum}^6_{10} \subset \P^{10}$, where ${\sum}^6_{10} \subset \P^{10}$ is a smooth section of cone $\tilde{G}:=\smash{{\rm Cone}\bigl(o,G\bigl(2,\C^5\bigr)\bigr) \subset \P^{10}}$ over the Grassmann variety \smash{$G\bigl(2,\C^5\bigr) \!\subset\! \P\bigl(\bigl(\bigwedge^2 \C^5\bigr)^{\vee}\bigr)$} by a~smooth quadric hypersurface;
\item[$(v)$] a linear section of the $10$-dimensional spinor variety $S^{10}$;
\item[$(vi)$] a linear section of the Grassmann variety \smash{$G\bigl(2,\C^6\bigr) \subset \P\bigl(\bigl(\bigwedge^2 \C^6\bigr)^{\vee}\bigr)$};
\item[$(vii)$] a linear section of $LG\bigl(3,\C^6\bigr)$, where $LG\bigl(3,\C^6\bigr)$ is the Lagrangian Grassmann variety, which is the variety of isotropic $3$-planes for a non-degenerate skew-symmetric bilinear form on $\C^6$;
\item[$(viii)$] the $G_2$-variety which is the variety of isotropic $5$-planes for a non-degenerate skew-sym\-met\-ric $4$-linear form on $\C^7$.
\end{enumerate}
\end{Theorem}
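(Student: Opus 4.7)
The plan is to extend the Ionescu--Russo VMRT strategy (used to prove Theorem \ref{them:HC}(ii) in the Hartshorne case $n=2c$) to the case $n=2c-1$. The central object is the variety of lines $\cL_x\subset \P^{n-1}=\P^{2c-2}$ through a general point of $X$; by Theorem~\ref{them:IR}(ii) it is itself a nondegenerate quadratic variety cut out by at most $c$ quadrics. Since $X$ is not a complete intersection, Theorem~\ref{them:IR}(iii)(c) gives $p:=\dim \cL_x \neq n-1-c = c-2$, and since $\cL_x$ is cut out by at most $c$ quadrics we have $p\geq n-1-c = c-2$; together these yield $p\geq c-1$, which is precisely the range $2p\geq n-1$ of Lemma~\ref{lem:irr}(ii). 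Consequently $\cL_x$ is smooth, irreducible and nondegenerate in $\P^{2c-2}$, of codimension $c':=2c-2-p\leq c-1$.

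For the base codimension $c=2$ (so $n=3$, $X\subset \P^5$), Netsvetaev's criterion (Theorem~\ref{them:Net}) and the classification of smooth threefolds in $\P^5$ defined by quadrics single out the Segre threefold $\P^1\times\P^2\subset\P^5$ as the only non-complete-intersection example; this gives item~(i) and is the content of Section~\ref{sec:codim2}. For $c\geq 3$ (Section~\ref{sec:codim3}), the Ionescu--Russo dictionary is applied recursively to $\cL_x$. Using Theorem~\ref{them:HC} for the quadratic variety $\cL_x\subset\P^{p+c'}$, either $\cL_x$ is a complete intersection of quadrics, or $\cL_x$ is one of the two quadratic Hartshorne varieties $G(2,\C^5)\subset\P^9$ and $S^{10}\subset\P^{15}$, or the Hartshorne regime does not directly apply to $\cL_x$ and the classification of Mukai varieties (Theorem~\ref{them:Mukai}) must be invoked to conclude that $\cL_x$ is a Gushel--Mukai six-fold. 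In the complete-intersection case, the Fano index formula $i_X=p+2$ (Theorem~\ref{them:IR}(iii)) together with Theorems~\ref{them:dP} and~\ref{them:Mukai} identifies $X$ as a linear section of the appropriate rational homogeneous space, and the constraint $n=2c-1$ then isolates the hyperplane sections of $G(2,\C^5)$ and of $S^{10}$ appearing as items~(ii) and~(iii). The remaining Gushel--Mukai VMRT case leads to the candidate eleven-dimensional variety of item~(iv).

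The main obstacle is the reconstruction step: for each numerical type of VMRT, one must show that $X$ is uniquely determined up to projective equivalence. This uses the Picard rank one and Fano index information from Theorem~\ref{them:IR}(iii), ampleness of the hyperplane class, and careful matching of numerical invariants of the pair $(X,\cL_x)$ with the del Pezzo and Mukai classifications. A particularly delicate point is eliminating the intermediate numerical subcases where $\cL_x$ has codimension strictly between $1$ and $c-1$: there one uses Zak's theorems on linear normality and Severi varieties (Theorems~\ref{them:Zak:Lin}, \ref{them:Zak:Sev}) to control the embedding of $\cL_x$, combined with the count of defining quadrics from Theorem~\ref{them:IR}(ii) to bound the possibilities. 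Case~(iv) is the most subtle — the existence of such an $X$ is not established here, so the theorem produces only the unique numerical possibility, whose further structural properties (Fano index $8$, closed immersion of the tangent map, Gushel--Mukai structure on $\cL_x$) are developed in Section~\ref{sec5}.
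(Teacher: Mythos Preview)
Your proposal does not address the stated theorem. The statement in question is Theorem~\ref{them:Mukai}, Mukai's classification of smooth Fano varieties of coindex~$3$; in the paper this is a cited external result from \cite{Mu} and carries no proof --- it is used as a black box. What you have written is instead an outline of the proof of the \emph{main} theorem (Theorem~\ref{MT}), which \emph{invokes} Theorem~\ref{them:Mukai} as one of its tools. Indeed, your text even says ``the classification of Mukai varieties (Theorem~\ref{them:Mukai}) must be invoked,'' which makes the circularity explicit: you are assuming the very statement you are supposed to be proving.

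To prove Theorem~\ref{them:Mukai} itself you would need Mukai's vector-bundle and linear-section techniques (existence of a rigid bundle on $X$ giving an embedding into a suitable homogeneous variety, combined with genus/degree computations), none of which appear in your outline. The VMRT/Ionescu--Russo machinery you describe concerns quadratic varieties of small codimension and is logically downstream of the Mukai classification, not a route to establishing it. As a sketch for Theorem~\ref{MT} your outline is broadly in the spirit of the paper (and reasonably accurate there), but as a proof of Theorem~\ref{them:Mukai} it is simply the wrong target.
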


\begin{Definition}\label{def:GM} In Theorem~\ref{them:Mukai}, ${\sum}^6_{10} \subset \P^{10}$ is called the {\it Gushel--Mukai $6$-fold}.
\end{Definition}
We refer the reader to \cite{KP18} for the geometry of the Gushel--Mukai $6$-fold.

\section[The case c leq 2]{The case $\boldsymbol{c \leq 2}$}\label{sec:codim2}

For $n=2c-1$ and $c \leq 2$, let $X$ be a variety as in Setup~$\ref{set:1}$. Assume that $X$ is quadratic and not a complete intersection. Since we assume that $X$ is not a complete intersection, we have~${c=2}$. Then $n=3$. The purpose of this section is to prove the following.

\begin{Proposition}
 Let $X$ be a variety as in Setup~$\ref{set:1}$. Assume that $X$ is quadratic and not a~complete intersection. If $(n, c)=(3,2)$, then $X \subset \P^5$ is projectively equivalent to the Segre $3$-fold~$\P^1 \times \P^2 \subset \P^5$.
\end{Proposition}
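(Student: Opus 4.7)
The plan is to show that $X$ must realise the minimal possible degree in $\P^5$ and then to identify it via the classical del Pezzo--Bertini classification of varieties of minimal degree. The key observation is a simple residual-intersection argument: although $X$ is not itself a complete intersection of two quadrics, it sits inside such a complete intersection as a proper component, and the resulting degree estimate is just strong enough to pin $X$ down.

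First I would note that since $X$ is quadratic but not a complete intersection, Netsvetaev's criterion (Theorem~\ref{them:Net}) forces $m\geq 3$, so the linear system $V:=H^0(\cI_X(2))$ of quadrics through $X$ has dimension at least~$3$. Since the scheme-theoretic base locus of $V$ is $X$, which has codimension~$2$ in $\P^5$, a general pair $Q_1,Q_2\in V$ gives a complete intersection $Y:=Q_1\cap Q_2$ of pure dimension~$3$ and degree~$4$ containing $X$. Because $X$ is not a complete intersection, $X\subsetneq Y$, and since $X$ is irreducible of the same dimension as $Y$, it is a proper irreducible component of $Y$; in particular $\deg X<\deg Y=4$, so $\deg X\leq 3$.

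On the other hand, $X\subset\P^5$ is smooth, irreducible and nondegenerate of dimension~$3$, so the classical inequality $\deg X\geq N-n+1$ gives $\deg X\geq 3$. Hence $\deg X=3$, i.e., $X$ is a variety of minimal degree in $\P^5$. By the del Pezzo--Bertini classification, smooth nondegenerate varieties of minimal degree are linear subspaces, smooth quadric hypersurfaces, the Veronese surface $v_2(\P^2)\subset\P^5$, or smooth rational normal scrolls. The first three are incompatible with $X$ being a $3$-fold nondegenerate in $\P^5$ (a smooth quadric $3$-fold embeds nondegenerately only in $\P^4$, and the Veronese is a surface). Hence $X$ is a smooth rational normal scroll of dimension~$3$ and degree~$3$ in $\P^5$, which is uniquely the scroll $S(1,1,1)$, i.e., the Segre $3$-fold $\P^1\times\P^2\subset\P^5$.

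The main point that requires care is the assertion that a general pair $Q_1,Q_2\in V$ really does yield a complete intersection of pure dimension~$3$; this is a Bertini-type statement, which can be established by ruling out the degenerate possibility that $V$ consists entirely of reducible (rank~$\leq 2$) quadrics, because then the base locus would be forced into a hyperplane, contradicting the nondegeneracy of $X$. An alternative approach would be to analyse the VMRT $\cL_x\subset\P^2$ provided by Theorem~\ref{them:IR}(ii), which is cut out by two quadrics, but the direct degree argument above seems more economical in this very low-dimensional case.
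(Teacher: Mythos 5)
Your proposal is correct and follows essentially the same route as the paper: both realise $X$ as a proper irreducible component of a complete intersection $Q_1\cap Q_2$ of two quadrics (the paper delegates this to the proof of \cite[Theorem~2.4]{IR}, where you argue it directly), deduce $\deg X=3=\codim X+1$, and conclude via the Bertini/Eisenbud--Harris classification of varieties of minimal degree. Your extra appeal to Netsvetaev's criterion is harmless but unnecessary, since $m\geq 3$ already follows from the fact that $m=2$ would make $X$ scheme-theoretically a codimension-two intersection of two quadrics, i.e., a complete intersection.
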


\begin{proof} As in the proof of \cite[Theorem~2.4]{IR}, we may find quadrics $Q_1$, $Q_2$ such that $X$ is an irreducible component of the complete intersection scheme $Q_1 \cap Q_2$. Since $X$ is quadratic and not a complete intersection, this yields that $\deg X=3= {\rm codim} X+1$, that is, $X$ is a variety of minimal degree. By Bertini's theorem \cite{Bel} (see also \cite{EH87}), $X$ is projectively equivalent to the Segre $3$-fold $\P^1 \times \P^2 \subset \P^5$.
\end{proof}

\section[The case c geq 3]{The case $\boldsymbol{c \geq 3}$}\label{sec:codim3}
\subsection{General properties}

In this section, we work in the following setting.

\begin{Set}\label{set:3} Let $X \subset \P^N$ be a variety as in Setup~$\ref{set:1}$. Assume that $X$ is quadratic and not a~complete intersection. Assume that $n=2c-1$ and $c \geq 3$.
\end{Set}

\begin{Lemma}\label{lem:1} Let $X \subset \P^N$ be a variety as in Setup~$\ref{set:3}$. Then the following hold:
\begin{enumerate}\itemsep=0pt
\item[$(i)$] $X \subset \P^N$ is a smooth Fano variety covered by lines such that ${\rm Pic}(X) \cong \Z$ and $i_X=p+2$.
\item[$(ii)$] $\cL_x \subset \P^{n-1}$ is scheme-theoretically defined by $c$-independent quadratic equations.
\end{enumerate}
\end{Lemma}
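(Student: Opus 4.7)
The plan is to observe that Lemma~\ref{lem:1} is an immediate corollary of Theorem~\ref{them:IR}, once one checks a single numerical inequality. The key computation is that, since $n=2c-1$ and $c\geq 3$ by Setup~\ref{set:3}, we have
\[
n-(c+2) = (2c-1)-(c+2) = c-3 \geq 0,
\]
so $n\geq c+2$. Thus the hypotheses of all three parts of Theorem~\ref{them:IR} are simultaneously satisfied (recall that $X$ is quadratic by Setup~\ref{set:3}).

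Given this, I would invoke Theorem~\ref{them:IR}(ii) and (iii) in sequence. Part (iii) gives directly that $X$ is Fano with $\Pic(X)\cong \Z$ and $i_X=p+2$, while part (ii) supplies the remaining assertions that $X$ is covered by lines and that $\cL_x\subset \P^{n-1}$ is scheme-theoretically defined by $c$ independent quadratic equations. Assembling these statements yields (i) and (ii) of the lemma verbatim, with smoothness of $X$ being part of the standing hypothesis in Setup~\ref{set:1}.

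There is essentially no geometric obstacle here; the argument is a bookkeeping verification. The one point worth flagging is that the bound $c\geq 3$ is exactly what is needed to reach the regime $n\geq c+2$ in which the Picard-number and Fano-index conclusions of Theorem~\ref{them:IR}(iii) apply; for $c=2$ (treated separately in Section~\ref{sec:codim2}) one has only $n\geq c+1$, which is why that case must be handled by different means. The hypothesis that $X$ is not a complete intersection plays no role in this particular lemma, but will be used crucially in later subsections through the equivalence $(a)\Leftrightarrow(b)\Leftrightarrow(c)$ of Theorem~\ref{them:IR}(iii), which forces $p\neq n-1-c$ in our setting.
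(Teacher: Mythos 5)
Your proposal is correct and coincides with the paper's own proof, which likewise consists of the single observation that $n=2c-1\geq c+2$ (equivalent to $c\geq 3$) and then cites Theorem~\ref{them:IR}. Your additional remarks, including that the non-complete-intersection hypothesis is not needed here, are accurate but merely elaborate the same one-line argument.
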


\begin{proof} Since $n= 2c-1 \geq c+2$, this follows from Theorem~\ref{them:IR}.
\end{proof}

\begin{Lemma}\label{lem:nondeg} Let $X \subset \P^N$ be a variety as in Setup~$\ref{set:3}$. Then we have \[2 \dim \cL_x \geq n-1.\]
In particular, $\cL_x \subset \P^{n-1}$ is smooth, irreducible, and nondegenerate.
\end{Lemma}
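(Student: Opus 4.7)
The plan is to combine Theorem~\ref{them:IR} with the elementary codimension bound for $\cL_x$ to show $p \geq c-1$, which immediately gives $2p \geq n-1$.

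First, by Lemma~\ref{lem:1}, $X$ is covered by lines, so Lemma~\ref{lem:irr}(i) gives $p = \dim \cL_x$. Also by Lemma~\ref{lem:1}, $\cL_x \subset \P^{n-1}$ is scheme-theoretically defined by $c$ independent quadrics, whence $\codim_{\P^{n-1}} \cL_x \leq c$, i.e., $p = \dim \cL_x \geq n-1-c$. If equality held, then $\cL_x \subset \P^{n-1}$ would be a complete intersection of codimension exactly $c$ (cut out by $c$ equations and of the expected codimension), and by Theorem~\ref{them:IR}(iii) the equality $p = n-1-c$ would force $X$ to be a complete intersection, contradicting Setup~\ref{set:3}. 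Therefore the inequality is strict, i.e., $p \geq n-c$.

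Substituting $n = 2c-1$ yields $p \geq c-1$, hence $2p \geq 2c-2 = n-1$, which is the desired inequality $2\dim \cL_x \geq n-1$. The second statement is then immediate from Lemma~\ref{lem:irr}(ii): the smoothness, irreducibility, and nondegeneracy of $\cL_x \subset \P^{n-1}$ follow directly from $2p \geq n-1$.

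There is no real obstacle here; the argument is a one-line numerical consequence of the previously recorded results once one rules out $p = n-1-c$ using the non-complete-intersection hypothesis via Theorem~\ref{them:IR}(iii).
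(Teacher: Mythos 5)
Your proof is correct and follows essentially the same route as the paper: both derive $\dim \cL_x \geq n-1-c$ from the $c$ defining quadrics of Lemma~\ref{lem:1}, rule out equality via the equivalence $p = n-1-c \Leftrightarrow X$ is a complete intersection in Theorem~\ref{them:IR}\,(iii), substitute $n=2c-1$ to get $\dim \cL_x \geq c-1 = \frac{n-1}{2}$, and conclude smoothness, irreducibility, and nondegeneracy from Lemma~\ref{lem:irr}\,(ii). Your only additions are explicit citations of Lemma~\ref{lem:irr}\,(i) for $p=\dim\cL_x$ and the (unneeded but harmless) remark that equality would make $\cL_x$ itself a codimension-$c$ complete intersection.
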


\begin{proof} Since $\cL_x \subset \P^{n-1}$ is defined by $c$ quadratic equations, we have an inequality
\[
\dim \cL_x \geq n-1-c =\frac{n-3}{2}.
\]
If the lower bound is attained, it follows from Theorem~\ref{them:IR} that $X$ is a complete intersection. This is a contradiction. So we have an inequality $\dim \cL_x >\frac{n-3}{2}$. Since $n=2c-1$, we have
\[
\dim \cL_x>\frac{(2c-1)-3}{2}=c-2.
\]
As a consequence, we obtain
\[
\dim \cL_x \geq c-1=\frac{n-1}{2}.
\]
The latter part of our assertion follows from Lemma~\ref{lem:irr}.
\end{proof}

\begin{Proposition}\label{prop:dimLx} Let $X \subset \P^N$ be a variety as in Setup~$\ref{set:3}$. Then we have
\[
\frac{3}{2}c-3 \leq \dim \cL_x < \frac{3}{2}c-2.
\]
\end{Proposition}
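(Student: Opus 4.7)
The plan is to apply Netsvetaev's criterion (Theorem~\ref{them:Net}) to the Hilbert scheme of lines $\cL_x \subset \P^{n-1}$ rather than to $X$ itself, and to derive both inequalities as consequences of the two sufficient conditions in that criterion failing for $\cL_x$.

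To set up, I would note that $\cL_x \subset \P^{n-1} = \P^{2c-2}$ is scheme-theoretically defined by $c$ linearly independent quadrics (Lemma~\ref{lem:1}) and is smooth, irreducible, and nondegenerate of dimension $d := \dim \cL_x \geq c-1$ (Lemma~\ref{lem:nondeg}). In particular, the Netsvetaev hypothesis $m \leq d+1$ holds for $\cL_x$ via $m \leq c \leq d+1$.

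The key intermediate step is to show that $\cL_x$ is itself \emph{not} a complete intersection. If it were, then as a complete intersection of $c_{\cL_x} = (n-1)-d = 2c-2-d$ quadrics, the degree-two part of its homogeneous ideal would have dimension exactly $c_{\cL_x}$ (there are no linear syzygies between the quadric generators of a complete intersection); but this ideal contains $c$ linearly independent quadrics by Lemma~\ref{lem:1}, so $c \leq c_{\cL_x} = 2c-2-d$, forcing $d \leq c-2$ and contradicting $d \geq c-1$ from Lemma~\ref{lem:nondeg}. Equivalently, the same count gives $c_{\cL_x} = c$, and Theorem~\ref{them:IR}(iii) would then make $X$ a complete intersection, contrary to Setup~\ref{set:3}.

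With $\cL_x$ ruled out as a complete intersection, both of Netsvetaev's sufficient conditions must fail for $\cL_x$. Using $m \leq c$, the condition $m < N - \tfrac{2}{3}n$ for $\cL_x$ is forced to hold whenever $c < (2c-2) - \tfrac{2}{3}d$, i.e.\ $d < \tfrac{3}{2}c - 3$; its impossibility gives the lower bound $\dim \cL_x \geq \tfrac{3}{2}c - 3$. Likewise the condition $n \geq \tfrac{3}{4}N - \tfrac{1}{2}$ for $\cL_x$ reads $d \geq \tfrac{3(2c-2)}{4} - \tfrac{1}{2} = \tfrac{3}{2}c - 2$, whose impossibility gives the upper bound $\dim \cL_x < \tfrac{3}{2}c - 2$. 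The main obstacle will be the middle step: extracting a clean numerical contradiction from Netsvetaev's ``$\cL_x$ is a complete intersection'' conclusion via the linear independence of the $c$ defining quadrics, which is what pins the codimension of $\cL_x$ inside the narrow window of width one.
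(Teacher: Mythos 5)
Your proposal is correct and takes essentially the same route as the paper: apply Netsvetaev's criterion (Theorem~\ref{them:Net}) to $\cL_x \subset \P^{2c-2}$, verify the hypothesis via $m \leq c \leq \dim \cL_x + 1$ from Lemma~\ref{lem:nondeg}, and extract both bounds from the failure of the two sufficient conditions, given that $\cL_x$ is not a complete intersection. The only divergence is your self-contained quadric-count showing $\cL_x$ is not a complete intersection of \emph{any} codimension, where the paper simply cites Theorem~\ref{them:IR}\,(iii) (whose item~(b) literally speaks of complete intersections of codimension $c$); your count is a legitimate, slightly more careful justification of that same step rather than a different method.
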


\begin{proof} By Theorem~\ref{them:IR}\,(iii), $\cL_x$ is not a complete intersection. As we have seen, $\cL_x \subset \P^{n-1}$ is scheme-theoretically defined by $c$-independent quadratic equations. Moreover, by Lemma~\ref{lem:nondeg}, we have $c \leq \dim \cL_x +1$. Thus, applying Theorem~\ref{them:Net} to $\cL_x \subset \P^{n-1}$, we have an inequality
\[
\frac{3n-9}{4} \leq \dim \cL_x < \frac{3n-5}{4}.
\]
Since $n=2c-1$, we obtain the desired inequality.
\end{proof}

\subsection[The case c is even]{The case $\boldsymbol{c}$ is even}

Let us additionally assume that $c$ is even.

\begin{Set}\label{set:4} Let $X \subset \P^N$ be a variety as in Setup~$\ref{set:3}$, and assume that $c$ is even.
\end{Set}

In this subsection, we shall prove the following:

\begin{Theorem}
Let $X$ be a variety as in Setup~$\ref{set:3}$. Assume that $c$ is even. Then $c$ is equal to $6$ and $\cL_x\subset \P^{10}$ is projectively equivalent to the Gushel--Mukai $6$-fold ${\sum}^6_{10} \subset \P^{10}$.
\end{Theorem}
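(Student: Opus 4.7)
The plan is to combine the Netsvetaev-type dimension bound of Proposition~\ref{prop:dimLx} with Theorem~\ref{them:HC} applied to $\cL_x$, and then finish the remaining cases using Mukai's classification (Theorem~\ref{them:Mukai}) and the del Pezzo classification (Theorem~\ref{them:dP}). Since $c$ is even, the half-open interval $[\frac{3c}{2}-3,\frac{3c}{2}-2)$ contains a unique integer, so Proposition~\ref{prop:dimLx} yields $\dim\cL_x=\frac{3c}{2}-3$. Write $X_1:=\cL_x$, $n_1:=\dim X_1$, and $c_1:=n-1-n_1=\frac{c}{2}+1$. By Theorem~\ref{them:IR}\,(ii), $X_1\subset\P^{n-1}$ is quadratic (its ideal contains $c$ linearly independent quadrics that cut out $X_1$ scheme-theoretically); since $X_1$ has codimension $c_1<c$, a complete intersection of codimension $c_1$ would have only $c_1$ independent defining quadrics, so $X_1$ cannot be a complete intersection.

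Applying Theorem~\ref{them:HC} to $X_1$ handles large $c$ at once: for $c\geq 12$ the inequality $n_1\geq 2c_1+1$ forces $X_1$ to be a complete intersection, a contradiction; for $c=10$ the equality $n_1=12=2c_1$ forces $X_1\in\{G(2,\C^5),S^{10}\}$, but $\dim X_1=12$ matches neither $6$ nor $10$. For $c=4$, Theorem~\ref{them:IR}\,(iii) and Lemma~\ref{lem:irr}\,(i) give $i_X=\dim\cL_x+2=5$, so $X\subset\P^{11}$ is a Mukai $7$-fold with cyclic Picard group. After checking that the hyperplane class is the ample generator of $\Pic(X)$ (a standard degree count rules out $\cO_X(1)$ being a proper multiple of the generator), Theorem~\ref{them:Mukai} applies with genus $g=N-n+2=6$; but every Mukai variety of genus $6$ is a linear section of the Gushel--Mukai $6$-fold and has dimension at most $6$, contradicting $\dim X=7$.

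For $c\in\{6,8\}$, I apply the same Netsvetaev argument to $X_1$: the VMRT $\cL_y\subset\P^{n_1-1}$ of $X_1$ at a general point $y$ is defined by $c_1$ linearly independent quadrics (by Theorem~\ref{them:IR}\,(ii) applied to $X_1$) and, by the same codimension argument as above, cannot be a complete intersection. The Netsvetaev bounds (Theorem~\ref{them:Net}) then pin down $p_{X_1}:=\dim\cL_y=5$ when $c=8$ and $p_{X_1}\in\{2,3\}$ when $c=6$. For $c=8$, this gives $i_{X_1}=7$, so $X_1$ is a Mukai $9$-fold of genus $7$, and Theorem~\ref{them:Mukai}\,(v) identifies $X_1$ with a hyperplane section of $S^{10}\subset\P^{15}$; but $S^{10}$ is scheme-theoretically cut out by exactly $10$ linearly independent quadrics, and so is a general hyperplane section of $S^{10}$ in $\P^{14}$, contradicting Theorem~\ref{them:IR}\,(ii), which provides only $c=8$ defining quadrics for $X_1$. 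For $c=6$, the value $p_{X_1}=3$ would force $X_1$ to be a del Pezzo $6$-fold in $\P^{10}$ with cyclic Picard group, absent from Theorem~\ref{them:dP}; hence $p_{X_1}=2$, so $X_1$ is a Mukai $6$-fold of genus $g=N_1-n_1+2=6$, and Theorem~\ref{them:Mukai}\,(iv) identifies $X_1$ projectively with ${\sum}^6_{10}\subset\P^{10}$.

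The main obstacle is the $c=8$ step: it rests on the classical fact that $S^{10}\subset\P^{15}$ is scheme-theoretically defined by exactly $10$ linearly independent quadrics (and that this minimality persists on a general hyperplane section), read together with the interpretation of ``scheme-theoretically defined by $c$ quadrics'' in Theorem~\ref{them:IR}\,(ii) as the statement that the ideal sheaf is generated by those $c$ quadrics. A smaller technical point is the $c=4$ verification that the ample generator of $\Pic(X)$ coincides with the hyperplane class, so that Mukai's theorem applies in the given embedding.
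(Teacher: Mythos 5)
Your overall architecture (pin $\dim\cL_x=\tfrac32c-3$, apply Theorem~\ref{them:HC} to $\cL_x$ to bound $c$, then classify via Theorems~\ref{them:dP} and~\ref{them:Mukai} using the second VMRT $\cM_{[\ell]}$) is the paper's, but the decisive step at $c=8$ has a genuine gap. You dispose of the hyperplane section $V$ of $S^{10}$ by appealing to ``the classical fact'' that $S^{10}\subset\P^{15}$ is scheme-theoretically defined by exactly $10$ independent quadrics, with minimality persisting on a \emph{general} hyperplane section. This is a non sequitur: $h^0\bigl(\cI_{S^{10}/\P^{15}}(2)\bigr)=10$ does not preclude some $8$-dimensional subsystem of those quadrics from already cutting out $S^{10}$ (or $V$) scheme-theoretically --- the minimal number of scheme-defining equations can be strictly smaller than the dimension of the space of quadrics through a variety, and reinterpreting ``defined by $c$ quadrics'' as sheaf generation does not change this, since generation of the ideal sheaf by a proper linear subspace of $H^0(\cI_V(2))$ is likewise not excluded by any dimension count. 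Ruling out $8$ is precisely the content of the paper's Proposition~\ref{prop:spinor}: via Ein--Shepherd-Barron, ${\rm Bl}_{S^{10}}\bigl(\P^{15}\bigr)\to Q^8$ is a $\P^7$-bundle, so eight quadrics through $S^{10}$ correspond to eight hyperplane sections of $Q^8\subset\P^9$, which always intersect, producing base points off $S^{10}$; an ideal-sheaf sequence (using linear normality of $S^{10}$) then transfers the statement to \emph{every} hyperplane section --- note that ``general'' would not suffice, as $\cL_x$ is some particular smooth section.

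There are secondary gaps as well. At $c=4$ and $c=6$ you compute the genus as $g=N-n+2$, which presupposes linear normality of $X\subset\P^{11}$, resp.\ of $\cL_x\subset\P^{10}$; nondegeneracy only gives $g\geq 6$, and a priori $\cL_x$ could be an isomorphic projection of a Mukai $6$-fold of genus $7$, $8$ or $9$ (a linear section of $S^{10}$, of $G\bigl(2,\C^6\bigr)$, or $LG\bigl(3,\C^6\bigr)$). The paper excludes such projections with Theorem~\ref{them:Zak:Lin} \emph{and} Theorem~\ref{them:Zak:Sev} --- the Severi classification is genuinely needed at $c=6$, since $G\bigl(2,\C^6\bigr)\subset\P^{14}$ is a Severi variety for which Zak's linear-normality bound alone is inconclusive --- and you invoke neither. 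Your exclusion of $p_{X_1}=3$ is also misstated: del Pezzo $6$-folds are not ``absent from Theorem~\ref{them:dP}'' ($G\bigl(2,\C^5\bigr)$ is one); the correct point is that no del Pezzo $6$-fold spans $\P^{10}$, because $h^0$ of the ample generator is at most $10$. Finally, your counting argument for non-complete-intersection (more independent quadrics than the codimension) silently fails in the boundary case $\dim\cM_{[\ell]}=n_1-1-c_1$ (e.g., $\dim\cM_{[\ell]}=3$ at $c=8$), where the number of quadrics equals the codimension; there one must use the equivalences of Theorem~\ref{them:IR}\,(iii), propagating ``$\cM_{[\ell]}$ CI $\Rightarrow\cL_x$ CI $\Rightarrow X$ CI,'' as the paper does. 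On the positive side, your disposal of $c=10$ by applying Theorem~\ref{them:HC}\,(ii) directly to $\cL_x$ (which would have dimension $12$ equal to twice its codimension, while neither $G\bigl(2,\C^5\bigr)$ nor $S^{10}$ has dimension $12$) is a correct shortcut, simpler than the paper's route through $\cM_{[\ell]}$ and Theorem~\ref{them:Net}.
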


\begin{Lemma}\label{lem:2:dim}
 Let $X \subset \P^N$ be a variety as in Setup~$\ref{set:4}$. Then the following hold:
\begin{enumerate}\itemsep=0pt
\item[$(i)$] $\dim \cL_x=\frac{3}{2}c-3.$
\item[$(ii)$] $\dim \cL_x -2\codim_{\P^{n-1}} \cL_x-1=\frac{1}{2}c-6.$
\end{enumerate}
\end{Lemma}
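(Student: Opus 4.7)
The plan is to read off both statements directly from Proposition~\ref{prop:dimLx} together with the parity assumption on $c$.

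For part~(i), Proposition~\ref{prop:dimLx} gives the double inequality $\tfrac{3}{2}c-3\leq \dim\cL_x<\tfrac{3}{2}c-2$. Since $c$ is even, both $\tfrac{3}{2}c-3$ and $\tfrac{3}{2}c-2$ are integers, so the half-open interval $[\tfrac{3}{2}c-3,\tfrac{3}{2}c-2)$ contains exactly one integer, namely $\tfrac{3}{2}c-3$. As $\dim\cL_x$ is an integer, this forces $\dim\cL_x=\tfrac{3}{2}c-3$, proving~(i).

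For part~(ii), I would just substitute. Since $n=2c-1$, the ambient projective space for the VMRT is $\P^{n-1}=\P^{2c-2}$, so
\[
\codim_{\P^{n-1}}\cL_x=(2c-2)-\Bigl(\tfrac{3}{2}c-3\Bigr)=\tfrac{1}{2}c+1
\]
by~(i). Plugging this in,
\[
\dim\cL_x-2\codim_{\P^{n-1}}\cL_x-1=\Bigl(\tfrac{3}{2}c-3\Bigr)-2\Bigl(\tfrac{1}{2}c+1\Bigr)-1=\tfrac{1}{2}c-6,
\]
which is~(ii).

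There is essentially no obstacle here: the content is entirely in the previously established Netsvetaev-based bound of Proposition~\ref{prop:dimLx}, and the role of the parity hypothesis is simply to pin $\dim\cL_x$ to the unique integer inside a unit-length half-open interval. The lemma is a bookkeeping step, presumably extracted so that it can be fed into a later application of a result such as Theorem~\ref{them:Zak:Sev} (Zak's classification of Severi varieties), where the quantity $\dim\cL_x-2\codim\cL_x-1$ measures how far one is from the Severi regime; the forthcoming identification of $\cL_x$ with the Gushel--Mukai $6$-fold will then presumably force $c=6$.
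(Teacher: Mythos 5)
Your proof is correct and is essentially the paper's own argument: part~(i) is read off from Proposition~\ref{prop:dimLx} (the parity of $c$ pinning $\dim\cL_x$ to the unique integer in the half-open unit interval, a detail the paper leaves implicit), and part~(ii) is direct substitution using $n=2c-1$. One small correction to your closing remark: the quantity $\dim\cL_x-2\codim_{\P^{n-1}}\cL_x-1$ is fed in Lemma~\ref{lem:2:c} into Theorem~\ref{them:HC} (the Hartshorne conjecture for quadratic varieties applied to $\cL_x$, forcing $\frac{1}{2}c-6<0$, i.e., $c<12$), not into Zak's classification of Severi varieties.
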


\begin{proof} The first part directly follows from Proposition~\ref{prop:dimLx}. The second part follows from the first.
\end{proof}

\begin{Lemma}\label{lem:2:c} Let $X \subset \P^N$ be a variety as in Setup~$\ref{set:4}$. Then $c$ is $4$, $6$, $8$ or $10$.
\end{Lemma}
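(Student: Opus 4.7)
The plan is to apply the Hartshorne conjecture for quadratic varieties (Theorem~\ref{them:HC}(i)) to the VMRT $\cL_x \subset \P^{n-1}$ itself, and then pit its conclusion against the fact that $\cL_x$ carries more independent defining quadrics than its own codimension can accommodate.

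First, I would record the numerics. Set $n' := \dim \cL_x$ and $c' := \codim_{\P^{n-1}} \cL_x$. By Lemma~\ref{lem:2:dim}(i), $n' = \frac{3}{2}c - 3$, and since $\dim \P^{n-1} = 2c-2$, this gives $c' = \frac{1}{2}c+1$. Lemma~\ref{lem:nondeg} together with Theorem~\ref{them:IR}(ii) ensures that $\cL_x$ is smooth, irreducible, nondegenerate and quadratic, so $\cL_x$ itself fits into Setup~\ref{set:1}. The arithmetic criterion $n' \geq 2c'+1$ required by Theorem~\ref{them:HC}(i) then reads $\frac{3}{2}c - 3 \geq c+3$, equivalently $c \geq 12$.

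Now suppose for contradiction that $c \geq 12$. Then Theorem~\ref{them:HC}(i) forces $\cL_x$ to be a complete intersection, and since $\cL_x$ is also quadratic this complete intersection is cut out by precisely $c'$ quadrics. The homogeneous ideal of such a complete intersection is saturated, with degree-$2$ component spanned by its $c'$ quadratic generators, so $h^0(\cI_{\cL_x}(2)) = c'$. On the other hand, Theorem~\ref{them:IR}(ii) produces $c$ linearly independent quadrics inside $H^0(\cI_{\cL_x}(2))$. Combining, $c \leq c' = \frac{1}{2}c+1$, i.e., $c \leq 2$, contradicting $c \geq 12$.

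Hence $c \leq 11$; together with $c \geq 3$ (Setup~\ref{set:3}) and $c$ even (Setup~\ref{set:4}), this leaves exactly $c \in \{4,6,8,10\}$. The only delicate step to spell out carefully is the equality $h^0(\cI_{\cL_x}(2)) = c'$ for a quadratic complete intersection, which follows from the Koszul resolution; I expect this to be the only non-bookkeeping check required.
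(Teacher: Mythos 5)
Your proof is correct relative to the paper's stated results, and it shares the paper's skeleton: both arguments apply the quadratic Hartshorne theorem (Theorem~\ref{them:HC}(i)) to $\cL_x \subset \P^{2c-2}$ with the numerics of Lemma~\ref{lem:2:dim}, so that the threshold $\dim \cL_x \geq 2\codim \cL_x + 1$ becomes exactly $c \geq 12$, and the even cases $4,6,8,10$ remain. Where you genuinely diverge is in how the complete-intersection conclusion is refuted. The paper does it in one line by citing Theorem~\ref{them:IR}(iii): since $X$ is not a complete intersection (built into Setup~\ref{set:3}), neither is $\cL_x$, so the hypothesis of Theorem~\ref{them:HC}(i) must fail, giving $\frac{1}{2}c - 6 < 0$ directly. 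You instead run a Koszul count: a complete intersection that is scheme-theoretically cut out by quadrics must have all $c' = \frac{1}{2}c+1$ generators quadratic, its saturated ideal satisfies $h^0\bigl(\cI_{\cL_x}(2)\bigr) = c'$, and this clashes with the $c$ linearly independent quadrics supplied by Theorem~\ref{them:IR}(ii). That count is sound (a regular sequence of quadrics is linearly independent, has no linear syzygies, complete intersection ideals are saturated, and nondegeneracy excludes linear forms), and it proves something slightly stronger than the paper explicitly invokes: under Theorem~\ref{them:IR}(ii) as stated, $\cL_x$ can never be a complete intersection of \emph{any} codimension --- which incidentally justifies the paper's reading of condition (b) of Theorem~\ref{them:IR}(iii) (``complete intersection of codimension $c$'') as ``not a complete intersection at all''. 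One sensitivity is worth flagging: your argument leans on the strong form of Theorem~\ref{them:IR}(ii) (``$c$ independent quadratic equations''), whereas the paper's own introduction phrases the same fact as ``at most $c$ quadratic equations''; under that weaker reading your inequality $c \leq h^0\bigl(\cI_{\cL_x}(2)\bigr) = c'$ evaporates, while the paper's route through Theorem~\ref{them:IR}(iii) is immune, since it draws the non-complete-intersection property of $\cL_x$ from that of $X$ rather than from a quadric count. In short, your version buys a self-contained, citation-free justification of the key step, at the cost of robustness to the precise formulation of IR(ii).
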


\begin{proof} By Theorem~\ref{them:IR}\,(iii), $\cL_x$ is not a complete intersection. Then Theorem~\ref{them:HC} and Lem\-ma~\ref{lem:2:dim} yield $\frac{1}{2}c-6<0$. Hence, $c<12$. 
\end{proof}

By Lemma~\ref{lem:2:c}, the pair $(c, n, \dim \cL_x, i_X)$ satisfies one of the following.

\begin{center}
\begin{tabular}{|c|c|c|c|}
\hline
 $c$ & $n$ & $\dim \cL_x$ & $i_X$ \\ \hline \hline
 $4$ & $7$ & $3$ & $5$ \\
 $6$ & $11$ & $6$ & $8$ \\
 $8$ & $15$ & $9$ & $11$ \\
 $10$ & $19$ & $12$ & $14$ \\

 \hline
\end{tabular}

\end{center}

\begin{Lemma}
Let $X \subset \P^N$ be a variety as in Setup~$\ref{set:4}$. Then $c$ is not $4$.
\end{Lemma}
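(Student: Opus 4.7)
The plan is to argue by contradiction: assume $c = 4$, so that the table gives $(n, \dim \cL_x, i_X) = (7, 3, 5)$ and $X \subset \P^{11}$. By Lemma~\ref{lem:1}, $X$ is a smooth Fano $7$-fold with $\Pic(X) \cong \Z$ and $i_X = 5$ that is covered by lines. Since every line $\ell \subset X$ satisfies $H \cdot \ell = 1$, the hyperplane class $H$ generates $\Pic(X)$ and is in particular very ample. The coindex $\dim X + 1 - i_X$ equals $3$, so $X$ is a Mukai variety whose Picard group is generated by a very ample line bundle, and Mukai's theorem (Theorem~\ref{them:Mukai}) applies.

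Next I would run through the eight possibilities. Cases~(iv), (vii), (viii) have dimensions $6$, $6$, $5$ and so cannot realize a $7$-fold. Cases~(i), (ii) involve defining equations of degree $\geq 3$, contradicting quadraticity. Case~(iii) is a complete intersection of three quadrics, contradicting the non-c.i.\ hypothesis. The only surviving possibilities are case~(v), a codimension-$3$ linear section of the spinor variety $S^{10}$, and case~(vi), a hyperplane section of $G(2, \C^6)$.

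To exclude these two, I would exploit the fact that the VMRT $\cL_x$ is intrinsic to the abstract Mukai variety. The VMRT of $S^{10}$ at a general point is $G(2, \C^5) \subset \P^9$, minimally cut out by its five Pl\"ucker quadrics, while the VMRT of $G(2, \C^6)$ is the Segre variety $\P^1 \times \P^3 \subset \P^7$, minimally cut out by the six $2 \times 2$ minors of a $2 \times 4$ matrix of linear forms. Restricting to the appropriate linear section in each case, $\cL_x \subset \P^6$ is intrinsically a codimension-$3$ linear section of $G(2, \C^5)$ in case (v) and a hyperplane section of $\P^1 \times \P^3$ in case (vi). A direct check shows that the induced quadrics remain a minimal scheme-theoretic defining set, since dropping any one produces a spurious linear-space component that persists after restriction. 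Hence the minimum number of quadrics scheme-theoretically cutting out $\cL_x$ is $5$ in case (v) and $6$ in case (vi), contradicting Lemma~\ref{lem:1}(ii), which asserts that $\cL_x$ is scheme-theoretically cut out by $c = 4$ independent quadrics.

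The main obstacle I expect is precisely this minimality verification for the two VMRTs in $\P^6$: one must check explicitly that no four-element subset of the natural Pl\"ucker or $2 \times 2$-minor generators, once restricted to the relevant linear section, still scheme-theoretically defines $\cL_x$. Once this is in hand, the numerical gap between the required $c = 4$ and the intrinsic $5$ or $6$ closes both remaining Mukai cases.
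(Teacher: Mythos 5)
Your reduction is the same as the paper's: for $c=4$, $X \subset \P^{11}$ is a smooth Mukai $7$-fold with $\Pic(X)\cong\Z$ generated by the very ample hyperplane class, and after Theorem~\ref{them:Mukai} only a codimension-$3$ linear section of $S^{10}$ and a hyperplane section of $G\bigl(2,\C^6\bigr)$ survive. (Your exclusion of cases (i)--(iii) via ``quadraticity'' is slightly off, since those models live in other projective spaces and quadraticity refers to the given embedding; the clean reason is that they are linearly normal with $h^0$ of the ample generator at most $11<12$, so they cannot sit nondegenerately in $\P^{11}$.) The genuine gap is the decisive step that you yourself flag: the claim that a codimension-$3$ linear section of $G\bigl(2,\C^5\bigr)\subset\P^9$, respectively a hyperplane section of $\P^1\times\P^3\subset\P^7$, cannot be scheme-theoretically cut out by $4$ quadrics in $\P^6$. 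This is a statement of the same depth as the paper's Proposition~\ref{prop:spinor}, and your sketched verification is inadequate even in outline. Lemma~\ref{lem:1}(ii) supplies \emph{some} $4$-dimensional space of quadrics, so you must exclude \emph{every} $4$-dimensional subspace of the full $5$- (resp.\ $6$-)dimensional system of quadrics through $\cL_x$, not merely the subsets obtained by dropping one standard Pl\"ucker or minor generator. You also need surjectivity of the restriction map $H^0\bigl(I_{G(2,\C^5)/\P^9}(2)\bigr)\to H^0\bigl(I_{\cL_x/\P^6}(2)\bigr)$ (the aCM/linear-normality step of Proposition~\ref{prop:spinor}(ii)) to lift your quadrics to the homogeneous model, and you must then show the spurious locus survives restriction: the extra linear spaces (the $\P^5$-fibers of $\mathrm{Bl}_{G(2,\C^5)}\P^9\to\P^4$, resp.\ the $\P^3$-fibers of $\mathrm{Bl}_{\P^1\times\P^3}\P^7\to Q^4\subset\P^5$) meet the relevant $\P^6$ in positive-dimensional linear spaces, and ruling out that these lie on $\cL_x$ (e.g., planes on the quintic del Pezzo section) works for a \emph{general} linear section by a dimension count, whereas the section you actually have is only known to be smooth, being the one induced by the hypothetical $X$.

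None of this is unfixable --- the blown-up quadric maps just mentioned make your minimality claims provable by exactly the technique of Proposition~\ref{prop:spinor} --- but as written the core of the exclusion is missing. By contrast, the paper disposes of both candidates in one line without revisiting the VMRT: the two models are linearly normal in $\P^{12}$ and $\P^{13}$, so $X\subset\P^{11}$ would have to be an isomorphic (possibly iterated) projection of them, and since a nondegenerate smooth $7$-fold in $\P^{12}$ satisfies $12<\frac{3}{2}\cdot 7+2$, Zak's Theorem~\ref{them:Zak:Lin} forces its secant variety to fill $\P^{12}$, so no projection into $\P^{11}$ exists. Completing your route would yield a self-contained alternative that avoids Zak's theorem for this case, but the linear-normality argument is both shorter and already part of the paper's toolkit.
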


\begin{proof} Assume to the contrary that $c=4$. Then $X$ is a $7$-dimensional smooth Fano variety~${X\subset \P^{11}}$ of coindex $3$. According to Theorem~\ref{them:Mukai}, $X$ is isomorphic to a linear section of the $10$-dimensional spinor variety $S^{10}\subset \P^{15}$ or a linear section of the Grassmann variety $G\bigl(2, \C^6\bigr) \subset \P^{14}$. Theorem~\ref{them:Zak:Lin} shows that these varieties cannot be isomorphically projected into~$\P^{11}$.
\end{proof}

\begin{Proposition}\label{prop:VMRT} Let $X$ be a variety as in Setup~$\ref{set:4}$. Assume that $c$ is $6$, $8$ or $10$. Then $\cL_x \subset \P^{2c-2}$ satisfies the following:
\begin{enumerate}\itemsep=0pt
\item[$(i)$] $\cL_x \subset \P^{2c-2}$ is a $\bigl(\frac{3}{2}c-3\bigr)$-dimensional nondegenerate smooth Fano variety.
\item[$(ii)$] $\cL_x \subset \P^{2c-2}$ is scheme-theoretically defined by $c$ independent quadratic equations.
\item[$(iii)$] $\cL_x \subset \P^{2c-2}$ is covered by lines.
\item[$(iv)$] ${\rm Pic} (\cL_x)$ is isomorphic to $\Z$.
\end{enumerate}
Furthermore, for a general point ${[\ell]} \in \cL_x \subset \P^{2c-2}$, let $\cM_{{[\ell]}} \subset \P^{\frac{3}{2}c-4}$ be the Hilbert scheme of lines passing through ${[\ell]} \in \cL_x$. Then \smash{$\cM_{{[\ell]}} \subset \P^{\frac{3}{2}c-4}$} satisfies the following:
\begin{enumerate}\itemsep=0pt
\item[$(v)$] \smash{$\cM_{{[\ell]}} \subset \P^{\frac{3}{2}c-4}$} is scheme-theoretically defined by $\bigl(\frac{1}{2}c+1\bigr)$ independent quadratic equations.
\item[$(vi)$] $\frac{3}{2}c-5\geq i(\cL_x)=\dim \cM_{{[\ell]}}+2 \geq c-2$.
\end{enumerate}
\end{Proposition}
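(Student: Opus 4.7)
The plan is to observe that $\cL_x\subset\P^{2c-2}$ is itself a quadratic variety in the sense of Setup~\ref{set:1} and to apply Theorem~\ref{them:IR} to it. By Lemma~\ref{lem:2:dim}(i), $\dim\cL_x=\frac{3}{2}c-3$, so its codimension in $\P^{n-1}=\P^{2c-2}$ equals $c':=\frac{1}{2}c+1$, and Lemma~\ref{lem:1}(ii) says $\cL_x$ is scheme-theoretically cut out by $c$ linearly independent quadrics. Thus $\cL_x$ is quadratic, and one checks $\dim\cL_x-c'=c-4\geq 2$ for $c\in\{6,8,10\}$, so Theorem~\ref{them:IR}(ii) and~(iii) apply with $\cL_x$ in the role of~$X$.

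From this I would read off assertions~(i)--(v). Nondegeneracy, smoothness and irreducibility of~$\cL_x$ come from Lemma~\ref{lem:nondeg}; the dimension is Lemma~\ref{lem:2:dim}(i); Theorem~\ref{them:IR}(iii) gives the Fano property and $\Pic(\cL_x)\cong\Z$, proving~(i) and~(iv). Part~(ii) is Lemma~\ref{lem:1}(ii). Theorem~\ref{them:IR}(ii) applied to~$\cL_x$ simultaneously gives~(iii) and the description of the secondary VMRT $\cM_{{[\ell]}}\subset\P^{\frac{3}{2}c-4}$ in~(v) as scheme-theoretically defined by $c'=\frac{1}{2}c+1$ linearly independent quadrics.

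For~(vi), the equality $i(\cL_x)=\dim\cM_{{[\ell]}}+2$ follows by combining Theorem~\ref{them:IR}(iii) applied to~$\cL_x$ with Lemma~\ref{lem:irr}(i). For the lower bound, since $\cL_x$ is nondegenerate in $\P^{n-1}$ and its ideal contains $c>c'$ linearly independent quadrics, a short degree count (any linear generator would degenerate $\cL_x$, while an ideal generated by $c'$ quadrics can contain only $c'$ linearly independent quadratic forms) shows $\cL_x$ is not a complete intersection. Hence the equivalence~(a)$\Leftrightarrow$(c) of Theorem~\ref{them:IR}(iii) applied to~$\cL_x$ gives $\dim\cM_{{[\ell]}}>\frac{3}{2}c-4-c'=c-5$, i.e., $\dim\cM_{{[\ell]}}\geq c-4$, so $i(\cL_x)\geq c-2$.

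The main obstacle is the upper bound $i(\cL_x)\leq\frac{3}{2}c-5=\dim\cL_x-2$. By Kobayashi--Ochiai, $i(\cL_x)\leq\dim\cL_x+1$, with equality forcing $\cL_x\cong\P^{\dim\cL_x}$ and $i(\cL_x)=\dim\cL_x$ forcing $\cL_x$ to be a smooth quadric. Both are excluded because in either case the ideal of $\cL_x$ would contain at most one independent quadratic form under a nondegenerate embedding compatible with being covered by lines, whereas the ideal contains $c\geq 5$ linearly independent quadrics. If $i(\cL_x)=\dim\cL_x-1$, then $\cL_x$ is a del Pezzo variety with very ample generator of $\Pic$, so by Theorem~\ref{them:dP} it is a cubic hypersurface, a complete intersection of two quadrics, or a linear section of $G\bigl(2,\C^5\bigr)\subset\P^9$. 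The first two cases are ruled out by $c'\geq 4$ and by the quadratic-only defining ideal. The third forces $\dim\cL_x\leq 6$, hence $c=6$ and $\cL_x\cong G\bigl(2,\C^5\bigr)$; but $\Pic\bigl(G\bigl(2,\C^5\bigr)\bigr)$ is generated by the Pl\"ucker class, with $h^0$ equal to $10$ on the generator and much larger on higher multiples, so $G\bigl(2,\C^5\bigr)$ admits no nondegenerate embedding into $\P^{10}=\P^{2c-2}$, contradicting Lemma~\ref{lem:nondeg}. This completes~(vi).
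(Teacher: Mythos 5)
Your proposal is correct and takes essentially the same route as the paper: view $\cL_x\subset\P^{2c-2}$ as a nondegenerate smooth quadratic variety with $\dim\cL_x-\codim\cL_x\geq 2$, apply Theorem~\ref{them:IR} iteratively to get (i)--(v) and $i(\cL_x)=\dim\cM_{[\ell]}+2$, and then pin down the index window in (vi) via Kobayashi--Ochiai together with Fujita's classification (Theorem~\ref{them:dP}). The only local differences are that you justify that $\cL_x$ is not a complete intersection by an explicit count of independent quadrics in its homogeneous ideal ($c>\frac{1}{2}c+1$ while a complete intersection with no linear forms admits at most $\codim$ many), and you exclude $G\bigl(2,\C^5\bigr)$ by the section count $h^0(\cO(1))=10<11$, where the paper instead cites Theorem~\ref{them:IR}\,(iii) and Zak's linear normality (Theorem~\ref{them:Zak:Lin}); both substitutions are sound, and your degree count in fact makes explicit a step the paper leaves terse.
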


\begin{proof} In Lemmas \ref{lem:1}, \ref{lem:nondeg} and \ref{lem:2:dim}, we have already seen (i) and (ii) except that $\cL_x$ is Fano. Since
\[
\dim\cL_x -\codim_{\P^{2c-2}}\cL_x -2=c-6 \geq 0,
\]
Theorem~\ref{them:IR} implies (i)--(iv). Applying Theorem~\ref{them:IR} to $\cL_x \subset \P^{2c-2}$, $\cM_{{[\ell]}} \subset \P^{\frac{3}{2}c-4}$ is defined by~$\bigl(\frac{1}{2}c+1\bigr)$ independent quadratic equations.
Therefore, (v) holds. Moreover, it follows from~(v)
\[
\dim \cM_{{[\ell]}} \geq \biggl(\frac{3}{2}c-4\biggr)-\biggl(\frac{1}{2}c+1\biggr)=c-5.
\]
Since $X$ is not a complete intersection, Theorem~\ref{them:IR}\,(iii) yields that neither \smash{$\cL_x \subset \P^{2c-2}$} nor \smash{${\cM_{{[\ell]}} \subset \P^{\frac{3}{2}c-4}}$} is a complete intersection and $\dim \cM_{{[\ell]}}\geq c-4$.
By the Kobayashi--Ochiai theorem~\cite{KO},
\[
\frac{3}{2}c-2=\dim \cL_x+1 \geq i(\cL_x).
\] Furthermore if $i(\cL_x)\geq \frac{3}{2}c-3$, then $\cL_x$ is isomorphic to \smash{$\P^{\frac{3}{2}c-3}$} or \smash{$Q^{\frac{3}{2}c-3}$}. Then $\cL_x \subset \P^{2c-2}$ is a complete intersection, because $\cL_x \subset \P^{2c-2}$ is covered by lines. This is a contradiction. Hence, we have $\frac{3}{2}c-4 \geq i(\cL_x)$.

Suppose $i(\cL_x)= \frac{3}{2}c-4$. Then $\cL_x$ is a del Pezzo variety. By our assumption, $c$ equals~$6$,~$8$ or~$10$. Thus, according to Theorem~\ref{them:dP}, $\cL_x$ is isomorphic to $G\bigl(2, \C^5\bigr) \subset \P^{9}$. Since~${\cL_x \subset \P^{2c-2}}$ is covered by lines, the embedding is given by the ample generator of ${\rm Pic}(\cL_x)$. Then Theorem~\ref{them:Zak:Lin} yields that $\cL_x \subset \P^{2c-2}$ is projectively equivalent to $G\bigl(2, \C^5\bigr) \subset \P^{9}$. However, this is a~contradiction, because ${2c - 2\geq 10>9}$.
\end{proof}

\begin{Proposition}
Let $X$ be a variety as in Setup~$\ref{set:4}$. If $c$ is equal to $6$, then $\cL_x\subset \P^{10}$ is projectively equivalent to the Gushel--Mukai $6$-fold ${\sum}^6_{10} \subset \P^{10}$.
\end{Proposition}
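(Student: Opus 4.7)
The plan is to apply Mukai's classification (Theorem \ref{them:Mukai}) to $\cL_x$ and eliminate all but one case. Specializing Proposition \ref{prop:VMRT} to $c=6$, $\cL_x\subset\P^{10}$ is a nondegenerate smooth $6$-dimensional Fano variety with $\Pic(\cL_x)\cong\Z$, scheme-theoretically defined by $6$ independent quadrics, covered by lines, not a complete intersection, and of Fano index $i(\cL_x)=4$ (the bounds $\frac{3}{2}c-5\geq i(\cL_x)\geq c-2$ collapse at $c=6$). Hence $\cL_x$ is a Mukai $6$-fold of Picard number one, and one of the eight types in Theorem \ref{them:Mukai} applies. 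Cases (i), (ii), (iii) are complete intersections and are excluded by Theorem \ref{them:IR}(iii); case (viii) is $5$-dimensional and hence excluded.

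For cases (v), (vi), (vii), the idea is to show that such a variety cannot be embedded in $\P^{10}$ at all. Because $\cL_x$ is covered by lines, the hyperplane class coincides with the ample generator $H$ of $\Pic(\cL_x)$. The natural embedding by the ample generator of $\Pic$ sits respectively in $\P^{11}$, $\P^{12}$, and $\P^{13}$ for a $6$-dimensional linear section of $S^{10}$, a $6$-dimensional linear section of $G\bigl(2,\C^6\bigr)$, and for $LG\bigl(3,\C^6\bigr)$ itself (equivalently, $h^0(H)=12,13,14$). Any embedding into $\P^{10}$ would be produced by successive isomorphic linear projections, and at some step one would be projecting a nondegenerate smooth $6$-dimensional variety $Y\subset\P^{11}$ into $\P^{10}$. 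But Theorem \ref{them:Zak:Lin} combined with Theorem \ref{them:Zak:Sev} (no $6$-dimensional Severi variety exists) forces $\mathrm{Sec}(Y)=\P^{11}$, so no point of $\P^{11}$ lies outside the secant variety and no isomorphic projection to $\P^{10}$ is possible. This rules out (v), (vi), (vii).

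Only case (iv) remains: $\cL_x$ is a linear section of the Gushel--Mukai $6$-fold ${\sum}^6_{10}\subset\P^{10}$, and since $\dim\cL_x=\dim{\sum}^6_{10}=6$, the linear section must be the entire variety, so $\cL_x\cong{\sum}^6_{10}$ as abstract varieties. Both are embedded in $\P^{10}$ by the complete linear system of the ample generator of $\Pic$ (i.e., $h^0(H)=11$), so the abstract isomorphism extends uniquely to a projective transformation of $\P^{10}$, yielding the desired projective equivalence. The main obstacle in this plan is the elimination of cases (v)--(vii); that step leans crucially on the absence of a $6$-dimensional Severi variety in Zak's classification, which is what blocks any isomorphic projection at the $\P^{11}$ level and thereby forces the Gushel--Mukai alternative.
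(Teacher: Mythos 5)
Your proposal is correct and follows essentially the same route as the paper: apply Proposition~\ref{prop:VMRT} to realize $\cL_x\subset\P^{10}$ as a Mukai $6$-fold covered by lines, invoke Theorem~\ref{them:Mukai}, and eliminate the cases other than the Gushel--Mukai $6$-fold by showing they cannot be isomorphically projected into $\P^{10}$ via Theorems~\ref{them:Zak:Lin} and~\ref{them:Zak:Sev}. Your write-up merely makes explicit what the paper leaves terse (the reduction of any such projection to a final step $\P^{11}\dashrightarrow\P^{10}$ blocked by the absence of a $6$-dimensional Severi variety, and the exclusion of the complete-intersection cases of Theorem~\ref{them:Mukai}, which the paper handles implicitly via nondegeneracy since those varieties have $h^0(H)\leq 10$), so there is no substantive difference in method.
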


\begin{proof} Assume that $c=6$. By Proposition~\ref{prop:VMRT}, $\cL_x \subset \P^{10}$ is a nondegenerate $6$-dimensional Fano variety with coindex $3$ and covered by lines; it follows from Theorem~\ref{them:Mukai} that $\cL_x$ is an isomorphic projection of one of the varieties as follows:
\begin{enumerate}\itemsep=0pt
\item[$({\rm i})$] the Gushel--Mukai $6$-fold ${\sum}^6_{10} \subset \P^{10}$;
\item[$({\rm ii})$] a linear section of the $10$-dimensional spinor variety $S^{10} \subset \P^{15}$;
\item[$({\rm iii})$] a linear section of the $8$-dimensional Grassmann variety $G\bigl(2, \C^6\bigr) \subset \P^{14}$;
\item[$({\rm iv})$] the $6$-dimensional Lagrangian Grassmann variety $LG\bigl(3, \C^6\bigr) \subset \P^{13}$.
\end{enumerate}
By Theorems~\ref{them:Zak:Lin} and \ref{them:Zak:Sev}, the varieties (ii)--(iv) cannot be isomorphically projected into $\P^{10}$. It~turns out that $\cL_x$ is projectively equivalent to the Gushel--Mukai $6$-fold ${\sum}^6_{10} \subset \P^{10}$. 
\end{proof}

\begin{Proposition}
Let $X$ be a variety as in Setup~$\ref{set:4}$. Then $c$ is not equal to $8$.
\end{Proposition}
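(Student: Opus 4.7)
The plan is to exhaust the two possible values of the Fano index of $\cL_x$ furnished by Proposition~\ref{prop:VMRT}(vi), namely $i(\cL_x)\in\{6,7\}$, and derive a contradiction in each case. By Proposition~\ref{prop:VMRT}, $\cL_x\subset\P^{14}$ is a $9$-dimensional smooth nondegenerate Fano variety with $\Pic(\cL_x)\cong\Z$, covered by lines, and scheme-theoretically defined by $8$ independent quadrics. Because every line $\ell\subset\cL_x$ has $H\cdot\ell=1$ and $L\cdot\ell\geq 1$ (with $L$ the ample generator of $\Pic(\cL_x)$), writing $H=kL$ forces $k=1$, so $H=L$ and nondegeneracy in $\P^{14}$ yields $h^0(L)\geq 15$.

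Suppose first $i(\cL_x)=7$, so $\cL_x$ is a smooth Mukai $9$-fold of Picard rank one. By Theorem~\ref{them:Mukai}, the only candidates of dimension $9$ are the quartic hypersurface in $\P^{10}$, the $(2,3)$-complete intersection in $\P^{11}$, the $(2,2,2)$-complete intersection in $\P^{12}$, and the hyperplane section of $S^{10}\subset\P^{15}$, with respective values $h^0(L)=11,12,13,15$. Only the last is compatible with $h^0(L)\geq 15$, so $\cL_x=S^{10}\cap L$. To exclude this, I would use the short exact sequence
\[
0\to\cO_{S^{10}}(1)\to\cO_{S^{10}}(2)\to\cO_{\cL_x}(2)\to 0
\]
together with the Borel--Weil values $h^0(\cO_{S^{10}}(1))=16$, $h^0(\cO_{S^{10}}(2))=126$ and the Kodaira vanishing $h^1(\cO_{S^{10}}(1))=0$ to deduce $h^0(\cO_{\cL_x}(2))=110$, hence $\dim H^0(\P^{14},\cI_{\cL_x}(2))=10$. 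Since $S^{10}$ is projectively normal with ideal generated in degree $2$, the same holds for its hyperplane section $\cL_x$, so $\cL_x$ cannot be scheme-theoretically defined by fewer than $10$ quadrics, contradicting Proposition~\ref{prop:VMRT}(ii).

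Suppose next $i(\cL_x)=6$, so $\cM_{[\ell]}\subset\P^{8}$ is a $4$-dimensional smooth nondegenerate Fano variety scheme-theoretically defined by $5$ independent quadrics (Proposition~\ref{prop:VMRT}(v)). Since $\cM_{[\ell]}$ is a quadratic variety in the sense of Setup~\ref{set:1}, its ideal is generated in degree $2$; the presence of $5$ linearly independent quadrics in $H^0(\P^{8},\cI_{\cM_{[\ell]}}(2))$ therefore forces the minimal number $m'$ of defining hypersurfaces to equal $5$. Applying Netsvetaev's criterion (Theorem~\ref{them:Net}) with $(n,N,m)=(4,8,5)$, both hypotheses $m\leq n+1=5$ and $m<N-\frac{2}{3}n=\frac{16}{3}$ are satisfied, so $\cM_{[\ell]}$ must be a complete intersection. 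But a complete intersection in $\P^{8}$ of codimension $4$ has exactly $m=4$ minimal defining hypersurfaces, contradicting $m'=5$. This excludes $i(\cL_x)=6$, completing the proof.

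The chief technical point in both cases is aligning the dimension of the degree-$2$ part of the ideal with the minimal number $m$ of defining hypersurfaces; this requires the ideal to be generated in degree $2$, supplied in Case~1 by the projective normality of $S^{10}$ and in Case~2 by the quadratic-variety hypothesis of Setup~\ref{set:1}.
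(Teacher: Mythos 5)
Your overall skeleton matches the paper's: split according to $i(\cL_x)\in\{6,7\}$, i.e., $\dim \cM_{[\ell]}\in\{4,5\}$, kill the smaller case via Netsvetaev, and reduce the larger case via Theorem~\ref{them:Mukai} to $\cL_x$ being a hyperplane section $V=S^{10}\cap H$. Your cohomology count in Case 1 is also correct ($h^0\bigl(\P^{14},\cI_{V}(2)\bigr)=10$). But the final inference there is a genuine gap: knowing that the quadrics through $V$ form a $10$-dimensional space generating the ideal does \emph{not} imply that $V$ cannot be the scheme-theoretic intersection of $8$ of them. ``Scheme-theoretically defined by $k$ quadrics'' only requires that the intersection of $k$ members of the system equal $V$ as a scheme, not that those members span the degree-two graded piece of the saturated ideal; in general the minimal number of scheme-theoretic equations can be strictly smaller than the minimal number of ideal generators (every smooth curve in $\P^3$ is scheme-theoretically an intersection of three surfaces, while its homogeneous ideal may need arbitrarily many generators). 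Closing precisely this gap is the entire content of the paper's Proposition~\ref{prop:spinor}: via the Ein--Shepherd-Barron description of the Cremona map $\P^{15}\dashrightarrow Q^8\subset \P^9$ given by the ten quadrics through $S^{10}$, the strict transforms of any $8$ such quadrics are pullbacks $q^{\ast}(H_i)$ of $8$ hyperplane sections of $Q^8$, which always have a common point, so any $8$ quadrics through $S^{10}$ cut out strictly more than $S^{10}$; the surjection $H^0\bigl(\cI_{S^{10}/\P^{15}}(2)\bigr)\twoheadrightarrow H^0\bigl(\cI_{V/H}(2)\bigr)$ then transfers the statement to the hyperplane section. Your dimension count alone cannot substitute for this geometric input, and without it Case 1 is unproved.

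Case 2 contains a second, more easily repaired, error. Proposition~\ref{prop:VMRT}\,(v) asserts the \emph{existence} of $5$ independent quadrics cutting out $\cM_{[\ell]}$ scheme-theoretically; it is an upper bound and says nothing about the minimal number $m'$. If $\cM_{[\ell]}$ were a complete intersection of $4$ quadrics, it would still be cut out scheme-theoretically by $5$ independent quadrics (add one general member of the ideal), so your claimed contradiction ``$m'=5$ versus $m=4$'' evaporates. The paper derives the contradiction differently: since $X$ is not a complete intersection, Theorem~\ref{them:IR}\,(iii), applied first to $X$ and then to $\cL_x$, shows that neither $\cL_x$ nor $\cM_{[\ell]}$ is a complete intersection (this is recorded in the proof of Proposition~\ref{prop:VMRT}), which directly contradicts the complete-intersection conclusion from Theorem~\ref{them:Net}. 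Replacing your minimality argument by this citation fixes Case 2; Case 1, however, is missing its essential idea.
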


\begin{proof} Assume that $c=8$. From Theorem~\ref{prop:VMRT}, it follows that $\dim \cM_{{[\ell]}}=4$ or $5$, and $\cM_{{[\ell]}} \subset \P^8$ is defined by $5$ quadratic equations. By Lemma~\ref{lem:irr}, $\cM_{{[\ell]}} \subset \P^8$ is smooth, irreducible and nondegenerate. If $\dim \cM_{{[\ell]}}=4$, then Theorem~\ref{them:Net} yields that $\cM_{{[\ell]}} \subset \P^8$ is a complete intersection. This contradicts our assumption that $X$ is not a complete intersection. Thus we have $\dim \cM_{{[\ell]}}=5$. By Proposition~\ref{prop:VMRT}, $\cL_x\subset {\P^{14}}$ satisfies the following:
\begin{enumerate}\itemsep=0pt
\item[$({\rm i})$] $\cL_x\subset \P^{14}$ is a nondegenerate smooth Fano $9$-fold of index $7$;
\item[$({\rm ii})$] $\cL_x\subset \P^{14}$ is covered by lines and ${\rm Pic} (X)\cong \Z$;
\item[$({\rm iii})$] $\cL_x\subset \P^{14}$ is scheme-theoretically defined by $8$ quadrics.
\end{enumerate}
By Theorem~\ref{them:Mukai}, $\cL_x\subset \P^{14}$ is projectively equivalent to a hyperplane section of the $10$-dimensional spinor variety $S^{10}\subset \P^{15}$. This contradicts Proposition~\ref{prop:spinor} below.
\end{proof}

\begin{Proposition}\label{prop:spinor} Let $V$ be a smooth hyperplane section of the $10$-dimensional spinor variety $S^{10}\subset \P^{15}$. Then the following hold:
\begin{enumerate}\itemsep=0pt
\item[$(i)$] $S^{10}\subset \P^{15}$ is not scheme-theoretically defined by $8$ quadrics.
\item[$(ii)$] $V \subset \P^{14}$ is not scheme-theoretically defined by $8$ quadrics.
\end{enumerate}
\end{Proposition}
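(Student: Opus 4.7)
The plan is to show that each of $S^{10}\subset \P^{15}$ and its smooth hyperplane section $V\subset \P^{14}$ requires at least ten quadric equations to define it scheme-theoretically, so eight do not suffice.

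For (i), because $S^{10}\subset \P^{15}$ is the highest-weight-vector orbit in the half-spin representation of $\mathrm{Spin}(10)$, the Lichtenstein result cited in the paper gives that $H^{0}(\P^{15},\cI_{S^{10}}(2))$ is the irreducible vector representation $V(\omega_{1})$ of $\mathrm{Spin}(10)$, of dimension $10$. Suppose for contradiction that eight quadrics $Q_{1},\ldots,Q_{8}$ scheme-theoretically cut out $S^{10}$. The induced sheaf surjection $\cO_{\P^{15}}^{\oplus 8}\twoheadrightarrow \cI_{S^{10}}(2)$ restricts, via the identification $\cI_{S^{10}}\otimes \cO_{S^{10}}\cong N^{\vee}_{S^{10}/\P^{15}}$, to a surjection of vector bundles
\[
\cO_{S^{10}}^{\oplus 8}\twoheadrightarrow N^{\vee}_{S^{10}/\P^{15}}(2),
\]
so its kernel $\cK$ would be a rank-$3$ vector bundle with $c_{4}(\cK)=c_{5}(\cK)=0$. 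Using the homogeneous description $T_{S^{10}}\cong \wedge^{2}\cU^{\vee}$ (with $\cU$ the tautological rank-$5$ bundle on the orthogonal Grassmannian) to compute $c(N^{\vee}_{S^{10}/\P^{15}}(2))$ in the Chow ring of $S^{10}$, I would invert to obtain $c(\cK)=c(N^{\vee}_{S^{10}/\P^{15}}(2))^{-1}$ and verify that its degree-$4$ or degree-$5$ component is nonzero, yielding the contradiction.

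For (ii), I would first establish $\dim H^{0}(\P^{14},\cI_{V}(2))=10$ from the short exact sequence
\[
0\to \cI_{S^{10}}(1)\to \cI_{S^{10}}(2)\to \cI_{V}(2)\to 0
\]
on $\P^{15}$ (valid because $V=S^{10}\cap H$ is transverse, so $\cI_{V}=\cI_{S^{10}}|_{H}$), combined with $H^{0}(\cI_{S^{10}}(1))=0$ from nondegeneracy and $H^{1}(\cI_{S^{10}}(1))=0$ from projective normality of the homogeneous variety $S^{10}$. The same Chern-class obstruction, applied on $V$ to $N^{\vee}_{V/\P^{14}}(2)=N^{\vee}_{S^{10}/\P^{15}}(2)|_{V}$ whose relevant Chern classes persist under restriction to a general smooth hyperplane section, again rules out eight defining quadrics.

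The main obstacle is the Chern-class computation on $S^{10}$: one must control $c(\wedge^{2}\cU^{\vee})$ together with $h=c_{1}(\cO_{S^{10}}(1))$ in the intersection ring well enough to check nonvanishing of the degree-$4$ or degree-$5$ part of $c(N^{\vee}_{S^{10}/\P^{15}}(2))^{-1}$. Should the direct calculation prove unwieldy, a viable alternative is to invoke the representation-theoretic minimal free resolution of $\cO_{S^{10}}$, whose first graded Betti number $\beta_{1,2}=10$ forces any scheme-theoretic defining family of quadrics to have at least ten members, with the bound for $V$ then transported through the same hyperplane-section sequence.
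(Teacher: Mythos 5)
Your route is genuinely different from the paper's and, on its main line, it does work. The paper proves (i) via the Ein--Shepherd-Barron special Cremona transformation: ${\rm Bl}_{S^{10}}\bigl(\P^{15}\bigr)$ is a $\P^7$-bundle $q$ over $Q^8$, and the strict transform of any quadric through $S^{10}$ is $q^{\ast}$ of a hyperplane section of $Q^8\subset\P^9$; eight such hyperplane sections always have nonempty common intersection, so eight quadrics cannot have base locus exactly $S^{10}$. For (ii) the paper lifts the eight quadrics through the surjection $H^0\bigl(\cI_{S^{10}/\P^{15}}(2)\bigr)\twoheadrightarrow H^0\bigl(\cI_{V/H}(2)\bigr)$ and contradicts (i). Your conormal-bundle Chern-class obstruction replaces all of this geometry, and I can confirm that the computation you defer does succeed, with one correction: using $T_{S^{10}}\cong\wedge^2\cU^{\vee}$, the relations $p_{2k}(x_i)=0$ and $e_5=c_5\bigl(\cU^{\vee}\bigr)=0$ in $H^{\ast}\bigl(S^{10},\Q\bigr)$, one checks degree by degree (I verified through degree $4$) that $c\bigl(N_{S^{10}/\P^{15}}\bigr)=c(\cU(2))$, so the degree-$4$ component of $c\bigl(N^{\vee}_{S^{10}/\P^{15}}(2)\bigr)^{-1}$ is $c_4(\cU)=e_4=e_1e_3-\tfrac{1}{8}e_1^4$, which is nonzero because $H^8\bigl(S^{10},\Q\bigr)$ has rank $2$ with basis $e_1^4$, $e_1e_3$. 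However, the degree-$5$ component is $-e_5=0$, so your ``degree-$4$ or degree-$5$'' disjunction must be settled at degree $4$; relying on degree $5$ alone would fail. Note also that your obstruction applies directly to $V$ (via $N^{\vee}_{V/\P^{14}}(2)=N^{\vee}_{S^{10}/\P^{15}}(2)\big|_V$), so for (ii) neither the computation $h^0\bigl(\cI_V(2)\bigr)=10$ nor any lifting is needed -- in this respect your argument is cleaner than the paper's, and it yields the stronger a priori information that the obstruction lives in $H^8$.

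Two genuine flaws to fix. First, your fallback argument is a non sequitur: the graded Betti number $\beta_{1,2}=10$ (equivalently $h^0\bigl(\cI_{S^{10}}(2)\bigr)=10$) bounds the number of minimal \emph{generators of the saturated homogeneous ideal} in degree $2$, but scheme-theoretic definition only requires the chosen quadrics to generate the ideal \emph{sheaf}, which is a strictly weaker condition; eight quadrics could in principle generate $\cI_{S^{10}}$ locally everywhere without spanning $H^0\bigl(\cI_{S^{10}}(2)\bigr)$. Distinguishing these two notions is precisely the content of the proposition, and is why both the paper's Cremona argument and your Chern-class argument are needed at all. Second, in (ii) you restrict to a \emph{general} smooth hyperplane section, but the statement concerns an \emph{arbitrary} smooth one (and the application in the paper is to a specific section, namely $\cL_x$). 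This is easily repaired without genericity: smoothness of $V=S^{10}\cap H$ already forces transversality, so $N_{V/\P^{14}}=N_{S^{10}/\P^{15}}\big|_V$, and the Lefschetz hyperplane theorem gives that $H^8\bigl(S^{10},\Q\bigr)\to H^8(V,\Q)$ is an isomorphism (as $8\leq\dim V-1$), whence $e_4\big|_V\neq 0$ for every smooth $V$. With these two repairs, your proof is complete and independent of the Ein--Shepherd-Barron result the paper relies on, at the cost of a nontrivial computation in the Chow ring of $S^{10}$; the paper's approach avoids all Chern-class computations but depends on the classification of special Cremona transformations.
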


\begin{proof} (i) By \cite[4.4]{ESB}, the linear system of quadrics containing $S^{10}\subset \P^{15}$ provides a rational map \smash{$\P^{15} \overset{\pi}\dashrightarrow Q^8 $}, that can be resolved via the blow-up of $\P^{15}$ along $S^{10}$, which is a $\P^{7}$-bundle $q\colon {\rm Bl}_{S^{10}}\bigl(\P^{15}\bigr)\to Q^8$ over $Q^8$ as follows:
\[
 \xymatrix{
 & {\rm Bl}_{S^{10}}(\P^{15}) \ar[ld]_p \ar[rd]^q & \\
 S^{10}\subset \P^{15} \ar@{.>}[rr]^{\pi}& & Q^8\subset \P^9.
 }
\]
Denoting the exceptional divisor of $p$ by $E$, we have
\begin{gather}\label{isom:spinor}
p^{\ast}\cO_{\P^{15}}(2)\otimes \cO_X(-E)\cong q^{\ast}\cO_{Q^8}(1).
\end{gather}
For a quadric hypersurface $D \subset \P^{15}$ containing $S^{10}$, we denote by $\widetilde{D}$ the strict transform of $D$ with respect to $p$ and by $H_D$ the closure of \smash{$\pi\bigl(D\setminus S^{10}\bigr)$}. By~(\ref{isom:spinor}), we have
\[
\widetilde{D}=p^{\ast}(D)-E=q^{\ast}(H_D).
\]
Assume that there exist $8$ quadrics $D_1, D_2, \dots, D_8 \in |I_{S^{10}/\P^{15}}(2)|$ such that $S$ is a scheme-theoretic intersection of $D_i$'s, that is, \smash{$S=\bigcap_{i=1}^8D_i$}.
This means that the base locus of the linear system $\langle D_1, D_2, \dots, D_8 \rangle$ coincides with \smash{$S^{10}$}: \[{\rm Bs}(\langle D_1, D_2, \dots, D_8 \rangle)=S^{10}.\]
Then we see that
\[
\bigcap_{i=1}^8 \widetilde{D_i}=\varnothing \qquad \text{in} \ {\rm Bl}_{S^{10}}\bigl(\P^{15}\bigr).
\]
However, we have
\[
\bigcap_{i=1}^8 H_i\neq \varnothing \qquad \text{in}\ Q^8\subset \P^{9}.
\]
This yields
\[
\bigcap_{i=1}^8 \widetilde{D_i}=\bigcap_{i=1}^8 q^{-1}(H_{i})=q^{-1}\Biggl(\bigcap_{i=1}^8 H_{i}\Biggr)\neq \varnothing.
\]
This is a contradiction.

(ii) Let $H \subset \P^{15}$ be a hyperplane such that $V=S^{10}\cap H$ as a scheme. Then we have a~standard exact sequence of ideal sheaves
\begin{gather}\label{ex:ideal}
0\to I_{S^{10}/\P^{15}}(1)\to I_{S^{10}/\P^{15}}(2) \to I_{V/H}(2)\to 0.
\end{gather}
Since the embedding $S^{10} \hookrightarrow \P^{15}$ is given by the complete linear system $\cO_{S^{10}}(1)$, we obtain $H^1\bigl(I_{S^{10}/\P^{15}}(1)\bigr)=0$. The above exact sequence (\ref{ex:ideal}) yields a surjection
\begin{gather}\label{surj:spinor}
H^0\bigl(S^{10}, I_{S^{10}/\P^{15}}(2)\bigr) \twoheadrightarrow H^0\bigl(S^{10}, I_{V/H}(2)\bigr).
\end{gather}

To prove our assertion, assume the contrary, that is, $V \subset \P^{14}$ is scheme-theoretically defined by $8$ quadrics $D_1', \dots, D_8'$. From the surjectivity of the above map (\ref{surj:spinor}), these quadrics $D_1', \dots, D_8'$ can be extended to quadrics \smash{$\tilde{D_1'}, \dots, \tilde{D_8'}$} containing $S^{10}$. This means that $S^{10}$ is contained in~the~scheme-theoretic intersection \smash{$\tilde{D_1'}\cap \dots \cap\tilde{D_8'}$}. Since $S^{10}$ does not coincide with \smash{$\tilde{D_1'}\cap \dots \cap\tilde{D_8'}$} by (i) and \smash{$\tilde{D_1'}\cap \dots \cap\tilde{D_8'}\cap H=V$}, we obtain a contradiction.
\end{proof}

\begin{Proposition}
Let $X$ be a variety as in Setup~$\ref{set:4}$. Then $c$ is not equal to $10$.
\end{Proposition}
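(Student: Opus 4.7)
The plan is to apply Netsvetaev's criterion (Theorem~\ref{them:Net}) to the iterated Hilbert scheme of lines $\cM_{[\ell]}$, exactly as in the $\dim \cM_{[\ell]}=4$ subcase of the $c=8$ argument. For $c=10$ the numerical bounds turn out to be tight enough that Netsvetaev alone will rule out every admissible dimension, so no appeal to Mukai's classification should be needed at the end.

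I would first assume for contradiction that $c=10$ and extract from Proposition~\ref{prop:VMRT} that $\cL_x \subset \P^{18}$ is a $12$-dimensional smooth Fano variety scheme-theoretically defined by $10$ independent quadrics, and that $\cM_{[\ell]} \subset \P^{11}$ is scheme-theoretically defined by $\frac{c}{2}+1=6$ independent quadrics (smooth, irreducible, and nondegenerate by Lemma~\ref{lem:irr}). The index bounds $c-2 \leq i(\cL_x) \leq \frac{3}{2}c-5$ from Proposition~\ref{prop:VMRT}(vi), together with $i(\cL_x)=\dim \cM_{[\ell]}+2$, will leave only $\dim \cM_{[\ell]} \in \{6,7,8\}$.

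The central step is to verify Netsvetaev's hypotheses for $\cM_{[\ell]} \subset \P^{11}$ with $m=6$, $N=11$, $n=\dim \cM_{[\ell]}$: the inequality $m \leq n+1$ is automatic; for $n \in \{6,7\}$ the first inequality $m < N - \frac{2n}{3}$ is easily verified, while for $n=8$ the second inequality $n \geq \frac{3N}{4}-\frac{1}{2}=\frac{31}{4}$ applies. In every case $\cM_{[\ell]}$ is forced to be a complete intersection, which combined with $\cM_{[\ell]}$ being minimally defined by $6$ quadrics forces $\codim \cM_{[\ell]}=6$ and hence $\dim \cM_{[\ell]}=5$, contradicting the range $\{6,7,8\}$; equivalently, iterated application of Theorem~\ref{them:IR}(iii) propagates the complete intersection property up to $X$, contrary to Setup~\ref{set:3}. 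I do not expect any real obstacle beyond bookkeeping, the one subtlety relative to the $c=8$ case being that both Netsvetaev inequalities are needed to cover the band $\{6,7,8\}$ rather than a single subcase.
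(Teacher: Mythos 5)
Your proposal is correct and takes essentially the same route as the paper: the paper likewise extracts $\dim \cM_{[\ell]} \in \{6,7,8\}$ and the $6$ defining quadrics from Proposition~\ref{prop:VMRT}, invokes Lemma~\ref{lem:irr} for smoothness, irreducibility and nondegeneracy of $\cM_{[\ell]} \subset \P^{11}$, and applies Theorem~\ref{them:Net} to conclude that $\cM_{[\ell]}$ is a complete intersection, contradicting via Theorem~\ref{them:IR}\,(iii) that $X$ is not one. Your explicit case check of the two Netsvetaev inequalities (the first for $\dim \cM_{[\ell]}=6,7$, the second for $\dim \cM_{[\ell]}=8$) is bookkeeping the paper leaves implicit, but the argument is the same.
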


\begin{proof} Assume that $c=10$. From Theorem~\ref{prop:VMRT}, it follows that $\dim \cM_{{[\ell]}}=6, 7$ or $8$, and $\cM_{{[\ell]}} \subset \P^{11}$ is defined by $6$ quadratic equations. By Lemma~\ref{lem:irr}, $\cM_{{[\ell]}} \subset \P^{11}$ is smooth, irreducible and nondegenerate. Then Theorem~\ref{them:Net} implies that $\cM_{{[\ell]}} \subset {\P^{11}}$ is a complete intersection. This contradicts our assumption that $X$ is not a complete intersection.
\end{proof}

\subsection[The case c is odd]{The case $\boldsymbol{c}$ is odd}

Let us consider the case where $c$ is odd.
In this subsection, we shall prove the following.

\begin{Theorem}
Let $X$ be a variety as in Setup~$\ref{set:3}$. Assume that $c$ is odd. Then $X$ is projectively equivalent to
\begin{enumerate}\itemsep=0pt
\item[$(i)$] a hyperplane section of the $10$-dimensional spinor variety $S^{10} \subset \P^{15}$ or
\item[$(ii)$] a hyperplane section of the $6$-dimensional Grassmann variety $G\bigl(2, \C^5\bigr) \subset \P^{9}$.
\end{enumerate}
\end{Theorem}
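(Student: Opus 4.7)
The plan is to first pin down $\dim\cL_x$ exactly. Proposition~\ref{prop:dimLx} gives
\[
\frac{3c-6}{2}\leq \dim\cL_x<\frac{3c-4}{2},
\]
and since $c$ is odd both bounds are half-integers, forcing the integer $\dim\cL_x$ to equal $\frac{3c-5}{2}$. Consequently $\cL_x\subset\P^{2c-2}$ has codimension $c':=\frac{c+1}{2}$, and by Lemma~\ref{lem:1}(ii) it is scheme-theoretically cut out by $c$ quadrics; so $\cL_x$ is itself a nondegenerate smooth quadratic variety of codimension $c'$ in $\P^{2c-2}$ in the sense of Setup~\ref{set:1}.

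The key step is then to apply the Hartshorne-type classification of Ionescu--Russo (Theorem~\ref{them:HC}) to $\cL_x$ rather than to $X$. The hypothesis $\dim\cL_x\geq 2c'+1$ of Theorem~\ref{them:HC}(i) becomes $c\geq 9$, in which case $\cL_x$ is a complete intersection; but by Theorem~\ref{them:IR}(iii) this forces $X$ to be a complete intersection too, contradicting our assumption. The extremal case $\dim\cL_x=2c'$ of Theorem~\ref{them:HC}(ii) occurs exactly for $c=7$, where $\cL_x$ would be projectively equivalent to either $G\bigl(2,\C^5\bigr)\subset\P^9$ or $S^{10}\subset\P^{15}$---but neither of the dimensions $6$ and $10$ matches $\dim\cL_x=8$. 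This leaves $c\in\{3,5\}$.

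For each remaining value of $c$ the plan is to apply the classical classification of smooth Fanos of small coindex to $X$ itself, after first remarking that the hyperplane class $H$ is a primitive generator of $\Pic(X)\cong\Z$: writing $H=kL$ with $L$ the ample primitive generator and intersecting with any line $\ell\subset X$ gives $1=H\cdot\ell=k\,L\cdot\ell$, so $k=1$ and $H$ satisfies the very-ample-generator hypothesis of Theorems~\ref{them:dP} and~\ref{them:Mukai}. For $c=3$, $X$ is a $5$-dimensional del Pezzo variety in $\P^8$; Theorem~\ref{them:dP} leaves only the hyperplane section of $G\bigl(2,\C^5\bigr)\subset\P^9$, since a cubic hypersurface lives in $\P^6$ and a complete intersection of two quadrics is excluded by hypothesis. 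For $c=5$, $X$ is a $9$-dimensional Mukai variety in $\P^{14}$; scanning Theorem~\ref{them:Mukai}, only the hyperplane section of $S^{10}\subset\P^{15}$ matches the ambient dimension while remaining a quadratic, non-complete-intersection $9$-fold (the other entries are of too small dimension, carry a minimal defining equation of degree $\geq 3$, or are complete intersections).

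The main subtle point is the bookkeeping of the second paragraph: one has to verify that $\cL_x$ fits into Setup~\ref{set:1} as a quadratic variety of codimension $c'=\frac{c+1}{2}$---in particular that a minimal system of its defining equations still consists of quadrics only---so that Theorem~\ref{them:HC} is legitimately applicable to it. Once this is in place the reduction to $c\in\{3,5\}$ is immediate; no iteration into the secondary VMRT $\cM_{[\ell]}$ is needed, which makes the odd case noticeably cleaner than the even case treated in the previous subsection.
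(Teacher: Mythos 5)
Your argument is correct, and most of it coincides with the paper's own proof: the parity argument forcing $\dim \cL_x=\frac{3c-5}{2}$, the exclusion of $c\geq 9$ by applying Theorem~\ref{them:HC}(i) to $\cL_x$ (this is exactly the paper's Lemma~\ref{lem:2:c:o}, phrased there as $\frac{c-9}{2}<0$), and the identification of the $c=3$ and $c=5$ cases via Theorems~\ref{them:dP} and~\ref{them:Mukai} all match; your explicit check that the hyperplane class is the primitive very ample generator (via $H\cdot\ell=1$) is even a bit more careful than the paper's terse ``since $X$ is covered by lines''. The genuine divergence is the exclusion of $c=7$. The paper descends one level further: as in Proposition~\ref{prop:VMRT} it shows $\cL_x\subset\P^{12}$ is a smooth nondegenerate quadratic variety covered by lines, passes to the second family $\cM_{[\ell]}\subset\P^{7}$, which is cut out by $4$ quadrics with $\dim\cM_{[\ell]}\in\{3,4,5\}$, and then invokes Netsvetaev (Theorem~\ref{them:Net}) to conclude $\cM_{[\ell]}$ is a complete intersection, contradicting Theorem~\ref{them:IR}(iii). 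You instead note that for $c=7$ one has $\dim\cL_x=8=2\codim_{\P^{12}}\cL_x$, so $\cL_x$ is a quadratic Hartshorne variety, and Theorem~\ref{them:HC}(ii) leaves only $G\bigl(2,\C^5\bigr)$ (dimension $6$) or $S^{10}$ (dimension $10$), neither of dimension $8$; hence $\cL_x$ would be a complete intersection, a contradiction. Your route is shorter and avoids the second VMRT entirely, at the price of leaning once more on the two points of fine print that you correctly flag: that $\cL_x$ qualifies as a quadratic variety in the sense of Setup~\ref{set:1} (smoothness, irreducibility and nondegeneracy come from Lemma~\ref{lem:nondeg}, and a complete intersection cut out scheme-theoretically by quadrics is necessarily a complete intersection of quadrics, so minimality of the defining system is harmless), and that ``$\cL_x$ a complete intersection $\Rightarrow X$ a complete intersection'' is used even though $\cL_x$ would be a complete intersection of codimension $\frac{c+1}{2}$ rather than $c$ as in the literal wording of Theorem~\ref{them:IR}(iii)(b). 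Since the paper itself deploys exactly this strengthened reading (in Proposition~\ref{prop:dimLx} and Lemma~\ref{lem:2:c}, and again for $\cM_{[\ell]}$ in the even case), your usage is on the same footing as the paper's, and I see no gap.
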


The following lemmas can be proved like {Lemmas~\ref{lem:2:dim} and \ref{lem:2:c}}. So, the proofs are left to the readers.

\begin{Lemma}
Let $X \subset \P^N$ be a variety as in Setup~$\ref{set:3}$ and assume that $c$ is odd. Then the following hold:
\begin{enumerate}\itemsep=0pt
\item[$(i)$] $\dim \cL_x=\frac{3c-5}{2}.$
\item[$(ii)$] $\dim \cL_x -2\codim_{\P^{n-1}} \cL_x-1=\frac{c-9}{2}.$
\end{enumerate}
\end{Lemma}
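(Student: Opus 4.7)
The plan is to mimic the argument of Lemma~\ref{lem:2:dim} essentially verbatim, with the only substantive change being the parity of $c$. For part~(i), I would invoke Proposition~\ref{prop:dimLx}, which supplies
\[
\frac{3}{2}c - 3 \leq \dim \cL_x < \frac{3}{2}c - 2.
\]
When $c$ is odd, neither endpoint is an integer. Since $3c-5$ is even, the number $\frac{3c-5}{2}$ is an integer, and it is the unique integer in the half-open interval $\bigl[\frac{3c-6}{2},\, \frac{3c-4}{2}\bigr)$. Because $\dim \cL_x$ is an integer, this forces $\dim \cL_x = \frac{3c-5}{2}$, as required.

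For part~(ii), once part~(i) is in hand, the assertion reduces to a mechanical computation. Since $n = 2c-1$, the ambient projective space has dimension $n-1 = 2c-2$, so
\[
\codim_{\P^{n-1}} \cL_x = (2c-2) - \frac{3c-5}{2} = \frac{c+1}{2}.
\]
Substituting into the expression of interest gives
\[
\dim \cL_x - 2\codim_{\P^{n-1}} \cL_x - 1 = \frac{3c-5}{2} - (c+1) - 1 = \frac{c-9}{2},
\]
which is the claimed identity.

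I do not foresee any genuine obstacle: both parts are entirely driven by Proposition~\ref{prop:dimLx} together with the integrality constraint on $\dim \cL_x$, which is precisely why the author defers the argument to the reader. The one point worth a moment's care is that the half-open nature of the interval in Proposition~\ref{prop:dimLx} is essential: it is what singles out $\frac{3c-5}{2}$ uniquely in the odd case, analogous to how the even case of Lemma~\ref{lem:2:dim} pins down $\frac{3}{2}c - 3$ via the left endpoint.
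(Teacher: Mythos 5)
Your proposal is correct and follows exactly the route the paper intends: the paper explicitly leaves this proof to the reader as an analogue of Lemma~\ref{lem:2:dim}, whose proof is precisely ``part~(i) follows from Proposition~\ref{prop:dimLx}, part~(ii) follows from part~(i).'' Your integrality argument (that $\frac{3c-5}{2}$ is the unique integer in the half-open interval when $c$ is odd) and the codimension computation $\codim_{\P^{n-1}}\cL_x=\frac{c+1}{2}$ correctly fill in the details the author omitted.
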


\begin{Lemma}\label{lem:2:c:o} Let $X \subset \P^N$ be a variety as in Setup~$\ref{set:3}$ and assume that $c$ is {odd}. Then $c$ is $3$, $5$ or~$7$.
\end{Lemma}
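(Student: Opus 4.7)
The plan is to run the analog of Lemma~\ref{lem:2:c} with the odd-case numerics just stated. The key input is that, since $X$ is not a complete intersection by Setup~\ref{set:3}, Theorem~\ref{them:IR}(iii) forces $\cL_x \subset \P^{n-1}$ not to be a complete intersection either. On the other hand, by Lemma~\ref{lem:1}(ii), $\cL_x$ is scheme-theoretically cut out by $c$ independent quadrics in $\P^{n-1}$, so $\cL_x$ is itself a quadratic variety in the sense of the Definition, and the quadratic Hartshorne bound of Theorem~\ref{them:HC}(i) applies to it (with $\cL_x$'s own dimension and codimension in place of $n$ and $c$).

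Applying Theorem~\ref{them:HC}(i) to $\cL_x$ gives: if $\dim \cL_x \geq 2\codim_{\P^{n-1}}\cL_x+1$, then $\cL_x$ is a complete intersection, which we have just ruled out. Hence we obtain the strict inequality
\[
\dim \cL_x - 2\codim_{\P^{n-1}}\cL_x - 1 < 0.
\]
By the odd-case identity stated just before the lemma, the left-hand side equals $\frac{c-9}{2}$, so $c<9$. Since by Setup~\ref{set:3} we have $c\geq 3$, and $c$ is assumed odd, this forces $c\in\{3,5,7\}$.

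There is no real obstacle; the argument is a direct transposition of the even-codimension case, which is presumably why the author leaves it to the reader. The only point worth checking carefully is that the codimension of $\cL_x$ in $\P^{n-1}$ is $\frac{c+1}{2}$ (not the ambient codimension $c$ of $X$), so that the Hartshorne-type bound is applied with the correct numerics and yields precisely $\frac{c-9}{2}<0$.
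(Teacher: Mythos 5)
Your proposal is correct and is exactly the argument the paper intends: the author states that Lemma~\ref{lem:2:c:o} ``can be proved like Lemmas~\ref{lem:2:dim} and~\ref{lem:2:c}'', and your transposition --- $\cL_x$ is not a complete intersection by Theorem~\ref{them:IR}\,(iii), so the quadratic Hartshorne bound of Theorem~\ref{them:HC}\,(i) applied to $\cL_x\subset\P^{n-1}$ (with $\codim_{\P^{n-1}}\cL_x=\frac{c+1}{2}$) gives $\frac{c-9}{2}<0$, hence $c\in\{3,5,7\}$ --- is precisely that proof, including the correct odd-case numerics.
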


By Lemma~\ref{lem:2:c:o}, the pair $(c, n, \dim \cL_x, i_X)$ satisfies one of the following.
\begin{center}
\begin{tabular}{|c|c|c|c|}
\hline
 $c$ & $n$ & $\dim \cL_x$ & $i_X$ \\ \hline \hline
 $3$ & $5$ & $2$ & $4$ \\
 $5$ & $9$ & $5$ & $7$ \\
 $7$ & $13$ & $8$ & $10$ \\

 \hline
\end{tabular}

\end{center}

\begin{Proposition}
Let $X$ be a variety as in Setup~$\ref{set:3}$. Then the following hold:
\begin{enumerate}\itemsep=0pt
\item[$(i)$] If $c=3$, then $X \subset \P^8$ is projectively equivalent to a hyperplane section of the Grassmann variety $G\bigl(2, \C^5\bigr)\subset \P^{9}$.
\item[$(ii)$] If $c=5$, then $X \subset \P^{14}$ is projectively equivalent to a hyperplane section of the spinor variety $S^{10}\subset \P^{15}$.
\end{enumerate}
\end{Proposition}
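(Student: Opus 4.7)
The plan is to read off the numerical invariants $(c,n,\dim\cL_x,i_X)$ from the table following Lemma~\ref{lem:2:c:o} and match $X$ against the classifications of smooth del Pezzo and Mukai varieties recalled in Theorems~\ref{them:dP} and~\ref{them:Mukai}.

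For $c=3$ I first observe that $n=5$ and $i_X=4=n-1$, so $X$ is a smooth del Pezzo $5$-fold. By Lemma~\ref{lem:1}, $\Pic(X)\cong\Z$; since $X$ is covered by lines $\ell\subset\P^N$ with $\cO_X(1)\cdot\ell=1$, the hyperplane class $\cO_X(1)$ must be the ample generator, and in particular it is very ample. Theorem~\ref{them:dP} then presents three options: a cubic hypersurface, a complete intersection of two quadrics, or a linear section of $G(2,\C^5)\subset\P^9$. The first is excluded because $X$ is quadratic, the second by hypothesis. So $X$ is abstractly a linear section of $G(2,\C^5)$, and the equality $\dim X=5=\dim G(2,\C^5)-1$ forces this to be a hyperplane section.

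For $c=5$ I proceed in parallel: here $n=9$ and $i_X=7=n-2$, so $X$ is a smooth Mukai $9$-fold whose Picard group is generated by the very ample class $\cO_X(1)$, by the same line-covering argument. Running through Theorem~\ref{them:Mukai}, types~(i)--(iii) are complete intersections and are excluded by hypothesis, while the remaining non-spinor candidates~(iv), (vi), (vii), (viii) are linear sections of varieties of dimensions $6$, $8$, $6$, $5$ respectively, none of which can accommodate a $9$-fold. Only~(v), a linear section of $S^{10}\subset\P^{15}$, remains, and $\dim X=9=\dim S^{10}-1$ forces this linear section to be a hyperplane section in $\P^{14}$.

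The step I expect to be the main obstacle is upgrading the abstract isomorphism $X\cong V$ (with $V$ the relevant hyperplane section) to a projective equivalence of embeddings. The plan is to use the exact sequence
\[
0\to\cO_Y\to\cO_Y(1)\to\cO_V(1)\to 0,
\]
with $Y=G(2,\C^5)$ or $Y=S^{10}$, together with $h^0(Y,\cO_Y(1))=10$ or $16$ and $H^1(Y,\cO_Y)=0$, to obtain $h^0(V,\cO_V(1))=N+1$. Hence $V\subset\P^N$ is embedded by the complete linear system of the ample generator of $\Pic(V)$; pulling back through an isomorphism $\varphi\colon X\to V$, the same holds for $X$. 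Two embeddings of isomorphic polarized varieties given by complete linear systems on the ample generator of $\Pic$ differ only by a linear change of coordinates of $\P^N$, which yields the desired projective equivalence.
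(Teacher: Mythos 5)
Your proposal is correct and follows essentially the same route as the paper: read off $(n,i_X)$ from the table to identify $X$ as a del Pezzo $5$-fold (resp.\ Mukai $9$-fold), invoke Theorem~\ref{them:dP} (resp.\ Theorem~\ref{them:Mukai}) and eliminate the complete-intersection and wrong-dimension cases, then use the covered-by-lines condition to see the embedding is by the ample generator and compare $N+1$ with $h^0$ of the generator to upgrade to projective equivalence. The paper compresses all of this into three sentences (``Since $X$ is covered by lines and $c=3$, our assertion holds''), so your write-up is simply a more detailed version of the same argument.
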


\begin{proof} Suppose $c=3$. Then $X$ is a del Pezzo $5$-fold. By Theorem~\ref{them:dP}, $X$ is isomorphic to a~hyperplane section of the Grassmann variety $G\bigl(2, \C^5\bigr)\subset \P^{9}$. Since $X$ is covered by lines and~${c=3}$, our assertion holds. The case where $c=5$ also follows from Theorem~\ref{them:Mukai} and the same argument as above.
\end{proof}

\begin{Proposition}
 Let $X$ be a variety as in Setup~$\ref{set:3}$. Then $c$ is not equal to $7$.
\end{Proposition}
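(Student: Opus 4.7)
The plan is to mimic the argument used to rule out $c=10$ in the preceding subsection: I will iteratively apply Theorem~\ref{them:IR}, first to $X$ (which has already set up Section~\ref{sec:codim3}) and then to the VMRT $\cL_x$, and finally invoke Netsvetaev's criterion on the ``second'' VMRT $\cM_{[\ell]}$. Concretely, suppose for contradiction that $c=7$, so that $n=13$; Lemma~\ref{lem:1} and Proposition~\ref{prop:dimLx} then give that $\cL_x\subset\P^{12}$ has dimension $8$, is scheme-theoretically cut out by $7$ independent quadrics, and is therefore not a complete intersection (its codimension being only $4$). Since $\dim\cL_x\geq\codim_{\P^{12}}\cL_x+2$, Theorem~\ref{them:IR} applies to $\cL_x$ in its own right: exactly as in Proposition~\ref{prop:VMRT} one deduces that $\cL_x$ is a smooth Fano variety with $\Pic(\cL_x)\cong\Z$ covered by lines, and that $\cM_{[\ell]}\subset\P^{7}$ is scheme-theoretically defined by $4$ independent quadrics.

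Next I would pin down the possibilities for $\dim\cM_{[\ell]}$. The $4$ defining quadrics give $\dim\cM_{[\ell]}\geq 3$, while the equivalence of Theorem~\ref{them:IR}\,(iii) applied to $\cL_x$ rules out $\dim\cM_{[\ell]}=3$ (this would force $\cL_x$, and then $X$, to be a complete intersection). Hence $\dim\cM_{[\ell]}\in\{4,5,6\}$, and in each case $2\dim\cM_{[\ell]}\geq 7=\dim\cL_x-1$, so Lemma~\ref{lem:irr} gives that $\cM_{[\ell]}\subset\P^{7}$ is smooth, irreducible, and nondegenerate. I would then apply Netsvetaev's criterion (Theorem~\ref{them:Net}) to $\cM_{[\ell]}\subset\P^{7}$ with $m=4$ and $N=7$: the hypothesis $m\leq\dim\cM_{[\ell]}+1$ is automatic, and one of the two numerical conditions holds in each remaining case -- for $\dim\cM_{[\ell]}=4$ one has $m=4<\frac{13}{3}=N-\frac{2}{3}\dim\cM_{[\ell]}$, while for $\dim\cM_{[\ell]}\in\{5,6\}$ one has $\dim\cM_{[\ell]}\geq\frac{19}{4}=\frac{3}{4}N-\frac{1}{2}$. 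Netsvetaev thus forces $\cM_{[\ell]}$ to be a complete intersection, and then Theorem~\ref{them:IR}\,(iii) applied successively to $\cL_x$ and to $X$ forces $X$ to be a complete intersection, contradicting Setup~\ref{set:3}.

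The step I expect to need the most attention is the iteration of Theorem~\ref{them:IR}: one must check that after passing to $\cL_x$ the inequality $\dim\cL_x\geq\codim\cL_x+2$ still holds (here it does, with room to spare) and that $\cL_x$ is again ``quadratic'' in the sense of the paper's definition, and then confirm that the $4$ independent quadrics furnished by Theorem~\ref{them:IR}\,(ii) are indeed the ones through which Netsvetaev's complete-intersection conclusion propagates back up via the equivalences of Theorem~\ref{them:IR}\,(iii). Once this bookkeeping is in place, the final contradiction is a numerical check essentially identical to the $c=10$ case treated just before, and no new classification result is needed.
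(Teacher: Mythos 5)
Your proposal is correct and takes essentially the same route as the paper's proof: pass to the quadratic variety $\cL_x\subset\P^{12}$ of dimension $8$, then to $\cM_{[\ell]}\subset\P^{7}$ cut out by $4$ independent quadrics, pin down the possible dimensions of $\cM_{[\ell]}$, and apply Netsvetaev's criterion (Theorem~\ref{them:Net}) to force $\cM_{[\ell]}$ to be a complete intersection, contradicting the non-complete-intersection hypothesis via Theorem~\ref{them:IR}\,(iii). The only (harmless) difference is bookkeeping of the dimension range: the paper works with $\dim\cM_{[\ell]}\in\{3,4,5\}$, bounding it above by the Kobayashi--Ochiai argument as in Proposition~\ref{prop:VMRT} and letting Netsvetaev dispose of the value $3$, whereas you rule out $3$ via Theorem~\ref{them:IR}\,(iii) and retain $6$ (which is trivially admissible since $\cM_{[\ell]}\subsetneq\P^{7}$, and which Netsvetaev's second numerical condition covers in any case).
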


\begin{proof} Suppose $c=7$. Then, the same proof as in Proposition~\ref{prop:VMRT} shows that $\cL_x \subset \P^{12}$ is a~nondegenerate smooth quadratic variety covered by lines. Moreover we also see that the family of lines $\cM_{{[\ell]}}\subset \P^{7}$ passing through a general point ${[\ell]} \in \cL_x$ satisfies
\begin{itemize}\itemsep=0pt
\item $\cM_{{[\ell]}}\subset \P^{7}$ is defined by $4$ independent quadratic equations, and
\item $\dim \cM_{{[\ell]}}=3, 4 $ or $5$.
\end{itemize}
Then, according to Theorem~\ref{them:Net}, we see that $\cM_{[\ell]}\subset \P^{7}$ is a complete intersection. This contradicts our assumption that $X \subset \P^N$ is not a complete intersection.
\end{proof}

\section{The remaining case}\label{sec5}
Summarizing the results so far, Theorem~\ref{MT} holds true.
In the following, we focus on the variety as in Theorem~\ref{MT}\,(iv). We work under the following setting.

\begin{Set}\label{set:4a} Let $X \subset \P^N$ be an $11$-dimensional Fano variety $X \subset \P^{17}$ whose Fano index is $8$. Moreover, $X$ satisfies the following:
\begin{enumerate}\itemsep=0pt
\item[$({\rm i})$] $X$ is covered by lines.
\item[$({\rm ii})$] For a general point $x \in X$, we denote by $\cL_x$ the Hilbert scheme of lines on $X$ passing through $x \in X$. Then $\cL_x \subset \P\bigl((T_xX)^{\vee}\bigr)=\P^{10}$ is projectively equivalent to a Gushel--Mukai $6$-fold.
\end{enumerate}
The ample generator of the Picard group ${\rm Pic}(\cL_x)$ is denoted by $\cO_{\cL_x}(1)$. The embedding $\cL_x \subset \P((T_xX)^{\vee})=\P^{10}$ is given by the complete linear system $|\cO_{\cL_x}(1)|$.
Let $\pi_x\colon \cU_x \to \cL_x$ be the universal family and ${\rm ev}_x\colon \cU_x\to X$ the evaluation morphism:
\[
 \xymatrix{
 \cU_x \ar[r]^{{\rm ev}_x} \ar[d]_{\pi_x} & X \\
 \,\cL_x. &
} \]
\end{Set}
We denote by ${\rm Locus}(\cL_x)$ the image of ${\rm ev}_x$: ${\rm Locus}(\cL_x):={\rm ev}_x(\cU_x)$.

\begin{Proposition}\label{prop:GM} Under the setting of Setup~$\ref{set:4a}$, $X$ satisfies the following:
\begin{enumerate}\itemsep=0pt
\item[$(i)$] $H^2(X, \Z)\cong \Z[H]$ and $H^4(X, \Z)\cong \Z\bigl[H^2\bigr]$.
\item[$(ii)$] $c_1(X)=8H \in H^2(X, \Z)$ and $c_2(X)=31H^2\in H^4(X, \Z)$. In particular, the second Chern character ${\rm ch}_2(X)=\frac{1}{2}\bigl(c_1(X)^2-2c_2(X)\bigr)=H^2$, that is, $X$ is a $2$-Fano variety in the sense of {\rm \cite{ArCas12}}.
\item[$(iii)$] The self-intersection number $H^{11}$ is equal to $24$.
\item[$(iv)$] Through two general points of $X$, there passes an irreducible conic contained in $X$, that is, $X$ is a conic-connected variety in the sense of~{\rm \cite{IR10}}.
\end{enumerate}
\end{Proposition}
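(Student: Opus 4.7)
The plan is to verify (i)--(iv) using Barth--Larsen cohomology, a Hilbert polynomial / Riemann--Roch computation, and a secant/dimension argument on the VMRT.

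For (i), Barth--Larsen applied to the smooth subvariety $X\subset\P^{17}$ of dimension $11$ gives $H^i(X,\Z)\cong H^i(\P^{17},\Z)$ for $i\le 2\cdot 11-17=5$, so $H^2(X,\Z)=\Z[H]$ and $H^4(X,\Z)=\Z[H^2]$, with $H$ the hyperplane class. The equality $c_1(X)=8H$ in~(ii) then follows from $i_X=8$ and ${\rm Pic}(X)=\Z[H]$, so the remainder of (ii) and (iii) reduces to computing the two Chern numbers $H^{11}$ and $c_2(X)\cdot H^9$.

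For these, I set up the Hilbert polynomial $P(m):=\chi(X,\cO(mH))$. Kodaira vanishing gives $h^i(X,\cO(mH))=0$ for $i>0$ and $m\ge -7$, while $h^0(X,\cO(mH))=0$ for $m<0$ since $H$ generates ${\rm Pic}(X)$. Thus $P$ vanishes at $m=-1,\dots,-7$ and $P(0)=1$, and Serre duality forces $P(m)=-P(-8-m)$, i.e., $P$ is antisymmetric around $m=-4$. Writing $s:=m+4$, $P$ becomes an odd polynomial of degree $11$ in $s$ with roots at $s=0,\pm 1,\pm 2,\pm 3$, hence
\[
P(m)=A\,s(s^2-1)(s^2-4)(s^2-9)\bigl(s^4+\beta s^2+\gamma\bigr),\qquad s=m+4,
\]
with $A=H^{11}/11!$. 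The three unknowns are pinned down by $P(0)=1$, $P(1)=h^0(X,\cO(H))$ (using linear normality of $X\subset\P^{17}$), and $P(2)=h^0(X,\cO(2H))=\binom{19}{2}-h^0(\P^{17},I_{X/\P^{17}}(2))$ (using the quadratic scheme-theoretic equations cutting out $X$). Reading off $A$ yields $H^{11}=24$, and matching the coefficient of $m^9$ against Hirzebruch--Riemann--Roch gives $c_2(X)\cdot H^9=31\cdot 24$, so $c_2(X)=31H^2$ by~(i); the identity ${\rm ch}_2(X)=\tfrac{1}{2}(c_1^2-2c_2)=(32-31)H^2=H^2$ is then immediate.

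For~(iv), ${\rm Locus}(\cL_x)\subset X$ has dimension $\dim\cL_x+1=7$, so for two general points $x,y\in X$ the intersection ${\rm Locus}(\cL_x)\cap{\rm Locus}(\cL_y)$ has dimension at least $7+7-11=3>0$, and any $z$ in it yields a broken conic $\overline{xz}\cup\overline{zy}$ on~$X$. Zak's theorem on linear normality (Theorem~\ref{them:Zak:Lin}) applied to the $6$-dimensional nondegenerate $\cL_x\subset\P^{10}$ (which satisfies $10<\tfrac{3}{2}\cdot 6+2$) gives ${\rm Sec}(\cL_x)=\P((T_xX)^{\vee})$; the standard VMRT/secant smoothing argument (e.g.~\cite{IR10}) then deforms this broken conic into an irreducible conic on $X$ through $x$ and $y$, so $X$ is conic-connected.

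The main obstacle is the Hilbert polynomial step: justifying linear normality of $X\subset\P^{17}$ and computing $h^0(X,\cO(2H))$ exactly both require controlling the cohomology of $I_{X/\P^{17}}$ and an accurate inventory of the quadrics vanishing on $X$, and any inaccuracy propagates directly into the claimed values of $H^{11}$ and $c_2(X)$. All remaining steps are essentially formal consequences of the preliminary results collected in Section~\ref{sec2}.
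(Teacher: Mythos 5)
Your plan for (i) matches the paper (Barth--Larsen), but for (ii)--(iii) your Hilbert-polynomial route is underdetermined and cannot produce the numbers you assert. You have three unknowns $A,\beta,\gamma$ and only two usable conditions: $P(0)=1$, and $P(1)=18$ (the latter already requires linear normality of $X\subset\P^{17}$, which does hold but needs the aCM property of quadratic varieties from \cite{BLL91}, a citation your sketch omits). The third condition $P(2)=\binom{19}{2}-h^0\bigl(\P^{17},I_{X/\P^{17}}(2)\bigr)$ is not available: nothing in Setup~\ref{set:4a} determines $h^0\bigl(I_{X/\P^{17}}(2)\bigr)$, since being quadratic only gives scheme-theoretic quadratic generators, and the gap between the number of scheme-theoretic defining quadrics and the full space of quadrics through a variety is exactly the phenomenon exhibited by Proposition~\ref{prop:spinor}. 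You flag this as ``the main obstacle'' but do not resolve it, so $H^{11}=24$ and $c_2(X)=31H^2$ are asserted rather than derived. Tellingly, your argument for (ii)--(iii) never uses the hypothesis that $\cL_x$ is a Gushel--Mukai $6$-fold, and that hypothesis is precisely what the paper exploits: by \cite[Proposition~1.3]{ArCas12} (or \cite[Proposition~4.2]{Dr06}) one has ${\pi_x}_{\ast}{\rm ev}_x^{\ast}({\rm ch}_2(X))=c_1(\cO_{\cL_x}(1))$, which forces ${\rm ch}_2(X)=H^2$ and hence $c_2(X)=31H^2$; then $H^{11}$ is computed on a general $4$-dimensional linear section $Y\subset\P^{10}$, where $c_1(Y)=H_Y$, $c_2(Y)=3H_Y^2$, and K\"uchle's Riemann--Roch formula \cite{Kuc97} together with linear normality of $Y$ (aCM descends to linear sections by \cite{Mig}) gives $11=h^0(-K_Y)=1+\frac{5}{12}H^{11}$, i.e., $H^{11}=24$. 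So the paper's (iii) depends on (ii), and (ii) depends on the GM structure of the VMRT; your scheme has no substitute for this input.

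Your argument for (iv) also has a genuine gap: the step ``$\dim\bigl({\rm Locus}(\cL_x)\cap{\rm Locus}(\cL_y)\bigr)\geq 7+7-11$'' presupposes the intersection is nonempty, and excess-dimension counts force nonemptiness in $\P^n$, not in an arbitrary $11$-fold. Even in a Picard-rank-one Fano variety, subvarieties with $\dim A+\dim B\geq\dim X$ can be disjoint (e.g., two disjoint $2$-planes in the quadric $Q^4\subset\P^5$). To argue via topology you would need $\bigl[{\rm Locus}(\cL_x)\bigr]^2\neq 0$ in $H^{16}(X,\Z)$, which is not supplied by Barth--Larsen (that controls only $H^i$ for $i\leq 5$). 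The paper avoids this entirely by invoking \cite[Theorem~3.14]{HK05}, which is tailored to produce a length-two chain of minimal rational curves through two general points under numerical bounds satisfied here ($p=\dim\cL_x=6$, $n=11$); smoothing the resulting free chain then yields the irreducible conic. Your Zak-type observation ${\rm Sec}(\cL_x)=\P^{10}$ is correct but does not repair the connecting step, which is the actual content of (iv).
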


\begin{proof} (i) follows from the Barth--Larsen theorem \cite[Corollary]{Lar73}. Since the Fano index of $X$ is~$8$, we have $c_1(X)=8H \in H^2(X, \Z)$. Applying \cite[Proposition~1.3\,(1.2)]{ArCas12} or \cite[Proposition~4.2]{Dr06}, we have
\[
{\pi_x}_{\ast}{\rm ev}_x^{\ast}({\rm ch}_2(X))={c_1(\cO_{\cL_x}(1))}.
\]
This yields that ${\rm ch}_2(X)={H^2}$; thus we obtain $c_2(X)=31H^2\in H^4(X, \Z)$. As a consequence, we see that (ii) holds.

Let $Y\subset \P^{10}$ be a general $4$-dimensional linear section of $X \subset \P^{17}$ and $H_Y$ a divisor on $Y$ which is the restriction of $H$ to $Y$; then one see that $Y$ is a Fano $4$-fold such that $c_1(Y)=H_Y$ and $c_2(Y)=3H_Y^2$. By the Riemann--Roch formula and various vanishing theorems, we obtain the following equation (see \cite[p.~48]{Kuc97}):
\[
h^0(-K_Y)=1+\frac{(-K_Y)^2c_2(Y)}{12}+\frac{(-K_Y)^4}{6}=1+\frac{5}{12}H^{11}.
\]
On the other hand, according to \cite[Corollary~2]{BLL91}, the quadratic variety $X$ is arithmetically Cohen--Macaulay (aCM) (also called projectively Cohen--Macaulay), that is, the homogeneous coordinate ring of $X \subset \P^{17}$ is Cohen--Macaulay (see also the sentence before \cite[Theorem~3.8]{IR}). By \cite[Theorem~1.3.3]{Mig}, $Y\subset \P^{10}$ is also aCM and in particular linearly normal. Thus we obtain
\[
11=h^0(-K_Y)=1+\frac{5}{12}H^{11}.
\]
This yields that $H^{11}=24$. Thus, (iii) holds.

Finally, (iv) follows from \cite[Theorem~3.14]{HK05}.
\end{proof}

\begin{Proposition}
Let $X$ be a variety as in Setup~$\ref{set:4a}$. For a general point $x\in X\subset \P^{17}$, ${\rm Locus}(\cL_x)\subset {\mathbb T}_xX\cong \P^{11}$ is projectively equivalent to the cone ${\rm Cone}(o', \cL_x) \subset \P^{11}$, where $o'$ is a~point in $\P^{11}\setminus \P((T_xX)^{\vee})$.
\end{Proposition}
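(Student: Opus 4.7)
The plan is to exhibit ${\rm Locus}(\cL_x)$ as a cone directly, by choosing an explicit model of the projective tangent space ${\mathbb T}_xX \cong \P^{11}$ that is compatible with the embedding $\cL_x \subset \P((T_xX)^\vee) \cong \P^{10}$. I would begin by noting that every line $\ell$ with $[\ell] \in \cL_x$ is automatically contained in ${\mathbb T}_xX$: since $\ell \subset X$ and $x \in \ell$, the line $\ell$ is tangent to $X$ at $x$, so $\ell \subset {\mathbb T}_xX$. In particular ${\rm Locus}(\cL_x) \subset {\mathbb T}_xX \cong \P^{11}$ from the start.

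Next, I would fix an auxiliary hyperplane $H \subset {\mathbb T}_xX$ with $x \notin H$ and use projection from $x$ to identify $H$ with the set of lines through $x$ in ${\mathbb T}_xX$, i.e., with $\P((T_xX)^\vee) \cong \P^{10}$. Under this identification, the tangent map $\tau_x\colon \cL_x \to \P((T_xX)^\vee)$ sends each $[\ell]$ to the single point $\ell \cap H$, which is well-defined because $x \in \ell \setminus H$ forces $\ell \not\subset H$, and this point agrees with the tangent direction $[T_x\ell]$. Thus $\tau_x(\cL_x) \subset H$ coincides set-theoretically with the locus of these intersection points.

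Since $\tau_x$ is a closed immersion (Setup~\ref{set:1}), distinct lines $[\ell] \in \cL_x$ yield distinct points in $H$, and conversely each $[v] \in \cL_x \subset H$ is the image of the unique line through $x$ and $[v]$ in ${\mathbb T}_xX$. Consequently,
\[
{\rm Locus}(\cL_x) = \bigcup_{[\ell] \in \cL_x} \ell = \bigcup_{[v] \in \cL_x \subset H} \overline{x[v]} = {\rm Cone}(x, \cL_x) \subset {\mathbb T}_xX,
\]
where $\overline{x[v]}$ denotes the line in $\P^{11}$ through $x$ and $[v]$. Setting $o' := x$, which is a point of $\P^{11}$ outside the hyperplane $H \cong \P((T_xX)^\vee)$, gives the projective equivalence asserted in the statement.

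The main point requiring attention is verifying that the identification $H \cong \P((T_xX)^\vee)$ obtained by projection from $x$ is the one compatible with the tangent map $\tau_x$, so that the Gushel--Mukai $6$-fold structure on $\cL_x \subset \P((T_xX)^\vee)$ recorded in Setup~\ref{set:4a} matches the copy of $\cL_x$ sitting inside $H$. Once this bookkeeping is in place, the cone description of ${\rm Locus}(\cL_x)$ is immediate from the fact that each $[\ell] \in \cL_x$ determines a line in ${\mathbb T}_xX$ through $x$ whose intersection with $H$ is $\tau_x([\ell])$.
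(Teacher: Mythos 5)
Your argument is correct, and it takes a genuinely different---and more elementary---route than the paper's. The paper works on the abstract universal family: it identifies $\cU_x\cong\P(\cO_{\cL_x}(1)\oplus\cO_{\cL_x})$ by splitting a rank-$2$ bundle (using ${\rm Ext}^1(\cO_{\cL_x}(-1),\cO_{\cL_x})=H^1(\cO_{\cL_x}(1))=0$), observes that ${\rm ev}_x$ is given by some linear subsystem $\Lambda\subset|\cO_{\cU_x}(1)|$, so that a priori ${\rm Locus}(\cL_x)$ is only a \emph{linear projection} of ${\rm Cone}(o',\cL_x)$, and then must exclude a nontrivial projection: it does so via the bijectivity of $p\colon{\rm Cone}(o',\cL_x)\to{\rm Locus}(\cL_x)$ combined with Zak's linear normality theorem, which gives ${\rm Sec}(\cL_x)=\P^{10}$ and hence ${\rm Sec}({\rm Cone}(o',\cL_x))=\P^{11}$, incompatible with projecting injectively from any center. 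Your synthetic argument makes this entire issue evaporate by working inside ${\mathbb T}_xX$ from the outset: since every member of $\cL_x$ passes through $x$, the set ${\rm Locus}(\cL_x)=\bigcup_{[\ell]}\ell$ is tautologically a cone with vertex $x$, and the projection-from-$x$ identification $H\cong\P\bigl((T_xX)^{\vee}\bigr)$ intertwines $[\ell]\mapsto\ell\cap H$ with $\tau_x$, so the base of the cone is projectively equivalent to the embedded $\cL_x\subset\P^{10}$. The compatibility you flag at the end is indeed the crux, but it is a one-line coordinate check: taking $x=[1:0:\dots:0]$ and $H=\{X_0=0\}$ in ${\mathbb T}_xX$, the point $[0:a_1:\dots:a_{11}]$ corresponds to the line with tangent direction $[a_1:\dots:a_{11}]$, visibly a projective linear isomorphism; so there is no gap. (Two side remarks: injectivity of $[\ell]\mapsto\ell\cap H$ does not even need $\tau_x$ to be a closed immersion, since a line through $x$ is determined by its tangent direction at $x$; and your proof uses nothing about $\cL_x$ beyond Setup~\ref{set:4a}---not even nondegeneracy or the Gushel--Mukai structure.) What the paper's longer route buys beyond the stated proposition is extra structure: ${\rm ev}_x$ is exhibited as the contraction of the vertex section of $\P(\cO_{\cL_x}(1)\oplus\cO_{\cL_x})$, i.e., the blow-up of the cone at $o'$, and the conclusion $\Lambda=|\cO_{\cU_x}(1)|$ shows that ${\rm Locus}(\cL_x)\subset\P^{11}$ is linearly normal; your argument delivers exactly the projective equivalence asserted, with strictly lighter machinery.
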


\begin{proof} According to \cite[Section~2.2]{ArCas12}, there exists a rank $2$ vector bundle $\cE$ over $\cL_x$ satisfying the following:
\begin{itemize}\itemsep=0pt
\item $\cU_x=\P(\cE)$.
\item The sequence $0\to \cO_{\cL_x} \to \cE\to \cO_{\cL_x}(-1) \to 0$ is exact.
\end{itemize}
Since we have
\[
\dim {\rm Ext^1}(\cO_{\cL_x}(-1), \cO_{\cL_x})=h^1(\cO_{\cL_x}(1))=0,
\]
$\cE$ is isomorphic to $\cO_{\cL_x}\oplus \cO_{\cL_x}(-1)$. By tensoring $\cO_{\cL_x}(1)$ to $\cE$, we see that $\cU_x$ is isomorphic to~$\P(\cO_{\cL_x}(1)\oplus \cO_{\cL_x})$. Denoting by $\cO_{\cU_x}(1)$ the tautological line bundle of $\cU_x=\P(\cO_{\cL_x}(1)\oplus \cO_{\cL_x})$, the evaluation morphism
\[
{\rm ev}_x\colon \ \cU_x=\P(\cO_{\cL_x}(1)\oplus \cO_{\cL_x}) \to {\rm Locus}(\cL_x)
\]
is determined by a linear system $\Lambda \subset |\cO_{\cU_x}(1)|$.
The complete linear system $|\cO_{\cU_x}(1)|$ determines the morphism $\cU_x=\P(\cO_{\cL_x}(1)\oplus \cO_{\cL_x})\to {\rm Cone}(o', \cL_x)$, which is nothing but the blow-up of ${\rm Cone}(o', \cL_x)$ at the vertex $o'$. Hence, the evaluation morphism ${\rm ev}_x$ factors through the morphism $\cU_x=\P(\cO_{\cL_x}(1)\oplus \cO_{\cL_x})\to {\rm Cone}(o', \cL_x)$, and the morphism $p\colon {\rm Cone}(o', \cL_x) \to {\rm Locus}(\cL_x)$ is given by a projection:
\[
 \xymatrix{
 \cU_x=\P(\cO_{\cL_x}(1)\oplus \cO_{\cL_x}) \ar[r] \ar[dr]_{{\rm ev}_x} & {\rm Cone}(o', \cL_x) \ar[d]^p \\
 & {\rm Locus}(\cL_x).
 }
\]

Since there exists one-to-one correspondence between the set of lines in ${\rm Cone}(o', \cL_x)$ passing through the vertex $o'$ and the set of lines in ${\rm Locus}(\cL_x)$ passing through $x$ via $p$, the map~$p$ is bijective. By the Zak's linear normality \cite[Chapter~II, Corollary 2.11]{Z} (see also \cite[Theorem~5.1.6]{RussoBk}), the secant variety ${\rm Sec}(\cL_x)$ coincides with the whole space $\P^{10}$. This implies that ${\rm Sec}({\rm Cone}(o', \cL_x))=\P^{11}$. Thus we conclude that $\Lambda=|\cO_{\cU_x}(1)|$ and ${\rm Locus}(\cL_x)\cong {\rm Cone}(o', \cL_x)$. As a consequence, our assertion holds.
\end{proof}

\begin{Lemma}\label{lem:GM} Let $X$ be a variety as in Setup~$\ref{set:4a}$. Then a double cover does not exist from $X$ to a smooth quadratic variety $Y\subset \P^{16}$.
\end{Lemma}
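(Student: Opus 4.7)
The plan is to derive a numerical contradiction from a purely degree-theoretic comparison. Suppose such a finite double cover $f\colon X\to Y$ exists with $Y\subset \P^{16}$ smooth and quadratic. Since $f$ has degree $2$, we have $\dim Y=\dim X=11$, so $Y$ has codimension $5$ in $\P^{16}$. The dimension $n=11$ and codimension $c=5$ satisfy $n=2c+1$, so Theorem~\ref{them:HC}(i) forces $Y\subset \P^{16}$ to be a complete intersection of $5$ quadric hypersurfaces. In particular, $\deg Y = 2^{5}=32$.

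Next I would use the Picard group computation from Proposition~\ref{prop:GM}(i): since $H^{2}(X,\Z)\cong \Z[H]$, the Picard group of $X$ is generated by the hyperplane class $H$. Hence $f^{\ast}\cO_{Y}(1)=dH$ for some positive integer $d$. Computing degrees via the projection formula gives
\[
d^{11}\cdot H^{11}=\bigl(f^{\ast}\cO_{Y}(1)\bigr)^{11}=\deg(f)\cdot \deg Y=2\cdot 32=64.
\]
Combined with $H^{11}=24$ from Proposition~\ref{prop:GM}(iii), this yields $d^{11}=64/24=8/3$, which has no integer solution. This contradiction rules out the existence of such a double cover.

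The only subtlety is the step invoking Theorem~\ref{them:HC}(i), which requires verifying that $Y\subset \P^{16}$ is nondegenerate; should $Y$ happen to lie in a hyperplane $\P^{15}$, one would view it instead as a quadratic variety of codimension $4$ in $\P^{15}$, and the inequality $n=11\geq 2\cdot 4+1=9$ again places it in the range of the Hartshorne conjecture for quadratic varieties, forcing $Y$ to be a complete intersection of $4$ quadrics with $\deg Y=16$. The same degree comparison then gives $24d^{11}=32$, i.e., $d^{11}=4/3$, again a contradiction. All further linearly degenerate cases reduce similarly, so no such $Y$ can exist and the lemma follows.
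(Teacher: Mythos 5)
Your proposal is correct and takes essentially the same route as the paper: Theorem~\ref{them:HC}\,(i) forces $Y\subset\P^{16}$ to be a complete intersection of five quadrics with $\deg Y=32$, and the contradiction comes from comparing degrees with $H^{11}=24$ from Proposition~\ref{prop:GM}\,(iii). The only difference is that you make explicit, via $f^{\ast}\cO_Y(1)=dH$ and the projection formula together with the linearly degenerate cases, what the paper compresses into the single assertion $\deg X=2\deg Y=64$ (which implicitly assumes $d=1$), so your write-up is if anything slightly more careful than the original.
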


\begin{proof} Assume the contrary, that is, there exists a double cover $f\colon X\to Y$ to a smooth quadratic variety $Y\subset \P^{16}$. By Theorem~\ref{them:HC}, $Y$ should be a complete intersection; thus we have $\deg Y=32$. This yields that $\deg X=2 \deg Y=64$. This contradicts Proposition~\ref{prop:GM}~(iii).
\end{proof}

\begin{Remark}
Let $X$ be a variety as in Setup~$\ref{set:4a}$. By Lemma~\ref{lem:GM}, if we can show that $X$ has a~double cover as in Lemma~\ref{lem:GM}, we can see that case (iv) of Theorem~\ref{MT} does not occur.
\end{Remark}

\subsection*{Acknowledgments} The author would like to express his sincere gratitude to the anonymous referees for their meticulous reading of his manuscript. Their insightful comments and suggestions have significantly improved various parts of this work.
The author knew the proof of Proposition~\ref{prop:spinor} in \cite{Stack} and would like to thank them for teaching how to prove it. Additionally, the author would like to thank Professor Wahei Hara for explaining the parts of the proof that were not understood and for providing a detailed explanation. The author is partially supported by JSPS KAKENHI Grant Number 21K03170, 25K06940.

\pdfbookmark[1]{References}{ref}
\LastPageEnding


\begin{thebibliography}{99}
\footnotesize\itemsep=0pt

\bibitem{ArCas12}
Araujo C., Castravet A.M., Polarized minimal families of rational curves and
 higher {F}ano manifolds,
 \href{https://doi.org/10.1353/ajm.2012.0008}{\textit{Amer.~J. Math.}}
 \textbf{134} (2012), 87--107,
 \href{http://arxiv.org/abs/0906.5388}{arXiv:0906.5388}.

\bibitem{Bel}
Bertini E., Introduzione alla geometria proiettiva degli iperspazi con
 appendice sulle curve algebriche e loro singolarit\'a, Enrico Spoerri, Pisa,
 1907.

\bibitem{BLL91}
Bertram A., Ein L., Lazarsfeld R., Vanishing theorems, a theorem of {S}everi,
 and the equations defining projective varieties,
 \href{https://doi.org/10.2307/2939270}{\textit{J.~Amer. Math. Soc.}}
 \textbf{4} (1991), 587--602.

\bibitem{CK}
Choi Y., Kwak S., Remarks on the defining equations of smooth threefolds in
 {$\mathbb{P}^5$},
 \href{https://doi.org/10.1023/A:1022133506844}{\textit{Geom. Dedicata}}
 \textbf{96} (2003), 151--159.

\bibitem{Dr06}
Druel S., Classes de {C}hern des vari\'et\'es unir\'egl\'ees,
 \href{https://doi.org/10.1007/s00208-006-0772-5}{\textit{Math. Ann.}}
 \textbf{335} (2006), 917--935,
 \href{http://arxiv.org/abs/math.AG/0502207}{arXiv:math.AG/0502207}.

\bibitem{ESB}
Ein L., Shepherd-Barron N., Some special {C}remona transformations,
 \href{https://doi.org/10.2307/2374881}{\textit{Amer.~J. Math.}} \textbf{111}
 (1989), 783--800.

\bibitem{EH87}
Eisenbud D., Harris J., On varieties of minimal degree (a centennial account),
 in Algebraic Geometry, {B}owdoin, 1985 ({B}runswick, {M}aine, 1985),
 \textit{Proc. Sympos. Pure Math.}, Vol.~46,
 \href{https://doi.org/10.1090/pspum/046.1/927946}{American Mathematical
 Society}, Providence, RI, 1987, 3--13.

\bibitem{Fal81}
Faltings G., Ein {K}riterium f\"ur vollst\"{a}ndige {D}urchschnitte,
 \href{https://doi.org/10.1007/BF01394251}{\textit{Invent. Math.}} \textbf{62}
 (1981), 393--401.

\bibitem{Fuj1}
Fujita T., On the structure of polarized manifolds with total deficiency
 one.~{I}, \href{https://doi.org/10.2969/jmsj/03240709}{\textit{J.~Math. Soc.
 Japan}} \textbf{32} (1980), 709--725.

\bibitem{Fuj2}
Fujita T., On the structure of polarized manifolds with total deficiency
 one.~{II}, \href{https://doi.org/10.2969/jmsj/03330415}{\textit{J.~Math. Soc.
 Japan}} \textbf{33} (1981), 415--434.

\bibitem{Hart-ci}
Hartshorne R., Varieties of small codimension in projective space,
 \href{https://doi.org/10.1090/S0002-9904-1974-13612-8}{\textit{Bull. Amer.
 Math. Soc.}} \textbf{80} (1974), 1017--1032.

\bibitem{Har}
Hartshorne R., Algebraic geometry, \textit{Grad. Texts in Math.}, Vol.~52,
 \href{https://doi.org/10.1007/978-1-4757-3849-0}{Springer}, New York, 1977.

\bibitem{Hw}
Hwang J.-M., Geometry of minimal rational curves on {F}ano manifolds, in School
 on {V}anishing {T}heorems and {E}ffective {R}esults in {A}lgebraic {G}eometry
 ({T}rieste, 2000), \textit{ICTP Lect. Notes}, Vol.~6, Abdus Salam Int. Cent.
 Theoret. Phys., Trieste, 2001, 335--393.

\bibitem{HK05}
Hwang J.-M., Kebekus S., Geometry of chains of minimal rational curves,
 \href{https://doi.org/10.1515/crll.2005.2005.584.173}{\textit{J.~Reine Angew.
 Math.}} \textbf{584} (2005), 173--194,
 \href{http://arxiv.org/abs/math.AG/0403352}{arXiv:math.AG/0403352}.

\bibitem{IR10}
Ionescu P., Russo F., Conic-connected manifolds,
 \href{https://doi.org/10.1515/CRELLE.2010.054}{\textit{J.~Reine Angew.
 Math.}} \textbf{644} (2010), 145--157,
 \href{http://arxiv.org/abs/math.AG/0701885}{arXiv:math.AG/0701885}.

\bibitem{IR}
Ionescu P., Russo F., Manifolds covered by lines and the {H}artshorne
 conjecture for quadratic manifolds,
 \href{https://doi.org/10.1353/ajm.2013.0020}{\textit{Amer.~J. Math.}}
 \textbf{135} (2013), 349--360,
 \href{http://arxiv.org/abs/1209.2047}{arXiv:1209.2047}.

\bibitem{KO}
Kobayashi S., Ochiai T., Characterizations of complex projective spaces and
 hyperquadrics, \href{https://doi.org/10.1215/kjm/1250523432}{\textit{J.~Math.
 Kyoto Univ.}} \textbf{13} (1973), 31--47.

\bibitem{KM}
Koll\'ar J., Mori S., Birational geometry of algebraic varieties,
 \textit{Cambridge Tracts in Math.}, Vol.~134,
 \href{https://doi.org/10.1017/CBO9780511662560}{Cambridge University Press},
 Cambridge, 1998.

\bibitem{Kuc97}
K\"uchle O., Some remarks and problems concerning the geography of {F}ano
 {$4$}-folds of index and {P}icard number one, \textit{Quaestiones Math.}
 \textbf{20} (1997), 45--60.

\bibitem{Kuz18}
Kuznetsov A., On linear sections of the spinor tenfold.~{I},
 \href{https://doi.org/10.4213/im8756}{\textit{Izv. Math.}} \textbf{82}
 (2018), 694--751.

\bibitem{KP18}
Kuznetsov A., Perry A., Derived categories of {G}ushel--{M}ukai varieties,
 \href{https://doi.org/10.1112/s0010437x18007091}{\textit{Compos. Math.}}
 \textbf{154} (2018), 1362--1406,
 \href{http://arxiv.org/abs/1605.06568}{arXiv:1605.06568}.

\bibitem{Lands96}
Landsberg J.M., Differential-geometric characterizations of complete
 intersections,
 \href{https://doi.org/10.4310/jdg/1214458739}{\textit{J.~Differential Geom.}}
 \textbf{44} (1996), 32--73.

\bibitem{Lar73}
Larsen M.E., On the topology of complex projective manifolds,
 \href{https://doi.org/10.1007/BF01390209}{\textit{Invent. Math.}} \textbf{19}
 (1973), 251--260.

\bibitem{Lich82}
Lichtenstein W., A system of quadrics describing the orbit of the highest
 weight vector, \href{https://doi.org/10.2307/2044044}{\textit{Proc. Amer.
 Math. Soc.}} \textbf{84} (1982), 605--608.

\bibitem{Stack}
Mathematics Stack Exchange, Quadratic equations defining a smooth hyperplane
 section of the 10-dimensional spinor variety, 2024, available at
 \url{https://math.stackexchange.com/questions/4899260/quadratic-equations-defining-a-smooth-hyperplane-section-of-the-10-dimensional-s}.

\bibitem{Mig}
Migliore J.C., An introduction to deficiency modules and liaison theory for
 subschemes of projective space, \textit{Lect. Notes Ser.}, Vol.~24, Seoul
 National University, Seoul, 1994.

\bibitem{Mu}
Mukai S., Biregular classification of {F}ano {$3$}-folds and {F}ano manifolds
 of coindex~{$3$}, \href{https://doi.org/10.1073/pnas.86.9.3000}{\textit{Proc.
 Nat. Acad. Sci. USA}} \textbf{86} (1989), 3000--3002.

\bibitem{Mum:quad}
Mumford D., Varieties defined by quadratic equations, in Questions on
 {A}lgebraic {V}arieties ({C}.{I}.{M}.{E}., {III} {C}iclo, {V}arenna, 1969),
 Centro Internazionale Matematico Estivo (C.I.M.E.), Edizioni Cremonese, Rome,
 1970, 29--100.

\bibitem{Net}
Netsvetaev N.Yu., Projective varieties defined by small number of equations are
 complete intersections, in Topology and Geometry~-- {R}ohlin {S}eminar,
 \textit{Lecture Notes in Math.}, Vol.~1346,
 \href{https://doi.org/10.1007/BFb0082787}{Springer}, Berlin, 1988, 433--453.

\bibitem{RussoBk}
Russo F., On the geometry of some special projective varieties, \textit{Lect.
 Notes Unione Mat. Ital.}, Vol.~18,
 \href{https://doi.org/10.1007/978-3-319-26765-4}{Springer}, Cham, 2016.

\bibitem{Z}
Zak F.L., Tangents and secants of algebraic varieties, \textit{Transl. Math.
 Monogr.}, Vol.~127, \href{https://doi.org/10.1090/mmono/127}{American
 Mathematical Society}, Providence, RI, 1993.

\end{thebibliography}
\end{document}